\newtheorem{thm}{Theorem}
\newtheorem{corollary}[thm]{Corollary}
\newtheorem{lemma}[thm]{Lemma}
\newtheorem{proposition}[thm]{Proposition}
\newtheorem{remark}[thm]{Remark}
\newtheorem{defi}[thm]{Definition}
\def\R{\mbox{\ensuremath{\mathbb R}}\xspace}
\def\C{\mbox{\ensuremath{\mathcal C}}\xspace}
\def\F{\mbox{\ensuremath{\mathcal F}}\xspace}
\def\HH{\mbox{\ensuremath{\mathcal H}}\xspace} 
\def\HABA{\mbox{\ensuremath{\mathcal{ABA\textit{-free}}}}\xspace}
\def\HABAB{\mbox{\ensuremath{\mathcal{ABAB\textit{-free}}}}\xspace}
\def\HABABA{\mbox{\ensuremath{\mathcal{ABABA\textit{-free}}}}\xspace}
\newcommand{\rectangle}{{%
		\ooalign{$\sqsubset\mkern3mu$\cr$\mkern3mu\sqsupset$\cr}%
	}}
\DeclareMathSymbol{\lsb@l}{\mathalpha}{letters}{`l}
\begin{document}

\title{Coloring hypergraphs defined by stabbed pseudo-disks and $ABAB$-free hypergraphs\thanks{Research by the first author was partially supported by ERC AdG Disconv and MTA EU10/2016-11001. Research by the second author was supported by the National Research, Development and Innovation Office -- NKFIH under the grant K 116769. Research by the second and third authors was supported by the Lend\"ulet program of the Hungarian Academy of Sciences (MTA), under grant number LP2017-19/2017.}}
\author{Eyal Ackerman\\
\and Bal\'azs Keszegh\\
\and D\"om\"ot\"or P\'alv\"olgyi}


\maketitle

\begin{abstract}
	What is the minimum number of colors that always suffice to color every planar set of points such that any disk that contains enough points contains two points of different colors? 
	It is known that the answer to this question is either three or four.
	We show that three colors always suffice if the condition must be satisfied only by disks that contain a fixed point.
	Our result also holds, and is even tight, when instead of disks we consider their topological generalization, namely \emph{pseudo-disks}, with a non-empty intersection.
	Our solution uses the equivalence that a hypergraph can be realized by stabbed pseudo-disks if and only if
	it is \emph{$ABAB$-free}.
	These hypergraphs are defined in a purely abstract, combinatorial way and our proof that they are $3$-chromatic is also combinatorial.
\end{abstract}

\section{Introduction}
Given a family of hypergraphs $\HH$ and a positive integer $c$, let $m(\HH,c)$ denote the least integer such that the vertices of every hypergraph $H\in\HH$ can be colored with $c$ colors such that every hyperedge of size at least $m(\HH,c)$ is non-monochromatic (i.e., contains two vertices with different colors).
In other words, for every hypergraph $H\in\HH$ the sub-hypergraph of $H$ that consists of all the hyperedges of size at least $m(\HH,c)$ is \emph{$c$-colorable}.
We denote by $\chi_m(\HH)$ the least integer $c$ for which such a finite $m(\HH,c)$ exists.

A family of geometric (or topological) regions $\F$ and a set of points $S$ naturally define a hypergraph $H(S,\F)$ whose vertices are the points in $S$ and whose hyperedge set consists of every subset $S' \subseteq S$ for which there is a region $F'\in\F$ such that $S'=F'\cap S$.
The family of (finite) hypergraphs $\HH(\F)$ defined by a family of geometric regions $\F$ consists of all the hypergraphs $H(S,\F)$ for some (finite) point set $S$.
We also say that $\F$ can \emph{realize} $H(S,\F)$.
By a slight abuse of notation we thus write $m(\F,c)$ and $\chi_m(\F)$ instead of $m(\HH(\F),c)$ and $\chi_m(\HH(\F))$, respectively.

Typically, one is interested in determining whether it holds that $\chi_m(\F)=2$ or at least $\chi_m(\F) <\infty$ for a given family of geometric regions $\F$. 
These questions are motivated by problems concerning \emph{cover-decomposability} and \emph{conflict-free} colorings. For more about these connections we refer to the surveys \cite{surveycd, surveycf}.
For example, it is known~\cite{homotsquare} that $m(\F_\square,2) \le 215$, where $\F_\square$ is the family of axis-parallel squares in the plane. In other words, it is possible to color any set of points in the plane with the colors blue and red, such that every axis-parallel square that contains at least $215$ points from this set of points contains a blue point and a red point.
Since, by definition, $\chi_m(\HH) > 1$, it follows that $\chi_m(\F_\square)=2$.
On the other hand, considering the family of axis-parallel rectangles $\F_\rectangle$, it is known~\cite{CPST09} that $\chi_m(\F_\rectangle)$ is infinite.

An intriguing question is to determine $\chi_m(\F_\bigcirc)$, where $\F_\bigcirc$ is the family of disks in the plane. It follows from the Four Color Theorem and the planarity of Delaunay-triangulations, that any finite set of points can be $4$-colored such that no disk containing at least two points is monochromatic, i.e., $m(\F_\bigcirc,4)=2$, and thus $\chi_m(\F_\bigcirc) \le 4$.
It is also known~\cite{indec} that $\chi_m(\F_\bigcirc) > 2$. Moreover, $\chi_m(\F) > 2$
even when $\F$ is the family of \emph{unit} disks \cite{unsplittable}.
Therefore, it remains an open problem whether  $\chi_m(\F_\bigcirc) = 3$ or  $\chi_m(\F_\bigcirc) = 4$.

We consider a generalization of disks, namely, \emph{pseudo-disks}. Roughly speaking, a family of regions is a family of pseudo-disks if they behave like disks in the sense that the boundaries of every two regions intersect at most twice (see Definition \ref{def:psdisk} below for a more precise definition).
We say that a family of regions is \emph{stabbed} if their intersection is non-empty,
that is, there exist a point that \emph{stabs} (i.e., it is contained in) all the regions. We say that a family of regions is \emph{internally stabbed} if the intersection of their interiors is non-empty.
Our main result is that coloring with three colors is possible (and sometimes necessary) for families of stabbed pseudo-disks. 

\begin{thm}\label{thm:main}
	Let $\F$ be a family of pseudo-disks whose intersection is non-empty and let $S$ be a finite set of points. Then it is possible to color the points in $S$ with three colors such that any pseudo-disk in $\F$ that contains at least two points from $S$ contains two points of different colors.
	Moreover, for every integer $m$ there is a set of points $S$ and a family of pseudo-disks $\F$ with a non-empty intersection, such that for every $2$-coloring of the vertices of the hypergraph $H(S,\F)$ there is a hyperedge of size at least $m$ which is monochromatic.
	
	To summarize with our notation, $m(\F_\odot,3) = 2$ and $\chi_m(\F_\odot) = 3$, where we denote by $\F_\odot$ the families of stabbed pseudo-disks.
\end{thm}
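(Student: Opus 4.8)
The plan is to split the statement into its two halves: the positive result $m(\F_\odot,3)=2$, and the lower bound $\chi_m(\F_\odot)=3$. Since the paper advertises that stabbed pseudo-disk hypergraphs coincide with the abstractly-defined \emph{$ABAB$-free} hypergraphs, I would first establish (or assume, if proved earlier in the body) this equivalence: a hypergraph $H$ on a point set is realizable by pseudo-disks all stabbed by a common point $p$ if and only if, after ordering the vertices cyclically around $p$ (or linearly, cutting the cycle at $p$), no two hyperedges $A,B$ induce an alternation $a_1 b_1 a_2 b_2$ with $a_1,a_2\in A\setminus B$ and $b_1,b_2\in B\setminus A$ along that order. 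The reduction is the standard one: radially sweep from $p$; each pseudo-disk meets the sweep in an interval-like structure on the circle of directions, and the two-intersections-of-boundaries condition forbids the $ABAB$ pattern. Granting this, the whole problem becomes: every $ABAB$-free hypergraph (restricted to hyperedges of size $\ge 2$) is $3$-chromatic, which is the combinatorial heart and what the authors say they prove combinatorially.

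For the positive direction I would attempt an inductive/greedy argument exploiting the linear order $v_1<\dots<v_n$ witnessing $ABAB$-freeness. The key structural fact to isolate is that in an $ABAB$-free hypergraph the hyperedges behave, with respect to any fixed vertex, like a laminar-ish or interval-like family once we look at how they "start" and "end" in the order; in particular there should be a vertex (say $v_1$ or $v_n$, i.e., the one closest to the stabbing point on one side) that lies in a controlled collection of hyperedges, allowing us to remove it, $3$-color the rest by induction, and extend. More concretely I expect the argument to track, for the extreme vertex, the hyperedges containing it and to show that the "traces" these hyperedges leave on the remaining vertices form a nested family (a chain), so that assigning the extreme vertex a color different from its immediate neighbor in the order — plus perhaps a small case analysis to handle size-$2$ hyperedges $\{v_i,v_{i+1}\}$ — suffices. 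One clean way to package this: show every $ABAB$-free hypergraph with all hyperedges of size $\ge 2$ has a vertex of "degree"-type bounded in a sense that lets a $3$-coloring propagate, or directly build the coloring by processing vertices in the order and maintaining an invariant that every already-closed hyperedge is non-monochromatic.

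The lower bound $\chi_m(\F_\odot)\ge 3$, i.e., the "moreover" clause, I would prove by an explicit construction for each $m$. Take a point set arranged so that the stabbed-pseudo-disk hyperedges realize a hypergraph that is not $2$-colorable even after discarding small hyperedges: e.g., mimic the classical constructions showing $\chi_m(\F_\bigcirc)>2$ (the indecomposable coverings of \cite{indec}) but checking that one can place a common stabbing point — this is typically easy since one can shrink/translate the disks in such constructions to all pass through a fixed point, or alternatively build the $ABAB$-free hypergraph abstractly as a large complete-like or shift-chain hypergraph that is known to require $3$ colors. The cleanest route is probably the abstract one: exhibit, for each $m$, an $ABAB$-free hypergraph with minimum hyperedge size $\ge m$ and chromatic number $>2$ (for instance using a "double chain" / shift hypergraph on a linear order, whose $2$-colorability would force a monochromatic long interval-hyperedge), then invoke the equivalence to realize it by stabbed pseudo-disks; combined with the general bound $\chi_m(\HH)>1$ from the definition, this pins $\chi_m(\F_\odot)=3$.

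The main obstacle I anticipate is the positive combinatorial step: proving that $ABAB$-freeness alone forces $3$-colorability of the $\ge 2$ hyperedges. The danger is that the naive "remove an extreme vertex" induction fails because removing a vertex can create new size-$2$ or size-$1$ obstructions, or because the traces of hyperedges through the extreme vertex are not actually nested without further use of the $ABAB$ condition relative to \emph{all} other hyperedges simultaneously (pairwise non-alternation does not obviously give global laminarity). Handling this likely requires a more careful invariant — perhaps coloring in the order while remembering, for each "open" hyperedge, whether it has already seen two colors, and using $ABAB$-freeness to argue that the set of still-monochromatic open hyperedges is linearly ordered by inclusion at every step so that three colors always leave a safe choice. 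Getting that invariant exactly right, together with the base cases for small hyperedges, is where the real work will be.
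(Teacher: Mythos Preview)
Your high-level plan matches the paper exactly: reduce to the combinatorial statement that every $ABAB$-free hypergraph is properly $3$-colorable (Theorem~\ref{thm:abab}), and derive the lower bound from the known $ABAB$-free non-$2$-colorable $m$-uniform constructions of~\cite{abafree} (Theorem~\ref{thm:abab-no}) via the equivalence (Theorem~\ref{thm:equiv}). The lower-bound half of your proposal is essentially what the paper does.

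The genuine gap is in the positive combinatorial step, and you correctly sense it. The paper does \emph{not} proceed by a greedy or inductive sweep over the vertices in the $ABAB$-free order, and your worry that ``pairwise non-alternation does not obviously give global laminarity'' is well-founded: the hyperedges through an extreme vertex need not form a chain, so the remove-a-vertex induction does not go through in the form you sketch. The paper's key idea is different and operates on \emph{hyperedges} rather than vertices. Call a pair $\{p,q\}$ of vertices \emph{unsplittable} if adding it as a size-$2$ hyperedge keeps the hypergraph $ABAB$-free. The crucial lemma (Lemma~\ref{lem:unsplittable}) is that \emph{every} hyperedge contains an unsplittable pair; its proof is a careful case analysis on ``left-splittable'' versus ``right-splittable'' consecutive pairs inside a hyperedge, choosing a one-sided pair with a minimal witness and deriving an $ABAB$ contradiction. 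Iterating the lemma, one augments $H$ by size-$2$ edges until every hyperedge contains one; the resulting graph $G$ of size-$2$ edges is itself $ABAB$-free, hence drawn without crossings as arcs above a line, hence outerplanar, hence $3$-colorable --- and any proper $3$-coloring of $G$ is a proper coloring of $H$.

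The paper also gives a second, geometric proof of the upper bound (Section~\ref{sec:direct}) that bypasses the equivalence: shrink each pseudo-disk to one containing exactly two points of $S$ (Lemma~\ref{lem:shrink2}), draw the resulting Delaunay graph with each edge inside its witnessing pseudo-disk, use simple-connectedness of stabbed unions (Theorem~\ref{thm:psdisksimplyconn}) to see every vertex lies on the outer face, and conclude outerplanarity via Hanani--Tutte. Both routes land on the same punchline --- an outerplanar ``Delaunay'' graph hitting every hyperedge --- which is the idea missing from your sketch.
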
 

It is important to note that the above-mentioned construction from~\cite{unsplittable} of a family $\F$ of unit disks (or more generally, \emph{translates} of any region with a smooth boundary) such that $\chi_m(F) > 2$ is not a family of stabbed pseudo-disks (although it is stabbed by two points, that is, there are two points such that every region contains at least one of them).

From Theorem~\ref{thm:main} it is easy to conclude the following.
\begin{corollary}
	Given a finite set of points $S$ it is possible to color the points of $S$ with three colors such that any disk that contains the origin and at least two points from $S$ contains two points with different colors.
\end{corollary}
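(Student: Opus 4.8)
The plan is to obtain this corollary as an immediate consequence of Theorem~\ref{thm:main}, the only point being to recognize that the disks passing through a fixed point constitute a stabbed family of pseudo-disks. I would set things up so that the fixed point is the origin, and let $\F$ denote the family of all closed disks in the plane that contain the origin.

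Two routine checks then place us in the setting of Theorem~\ref{thm:main}. First, $\F$ is a family of pseudo-disks: the boundary of each member of $\F$ is a circle, and the two circles bounding any two members of $\F$ meet in at most two points, which is exactly what Definition~\ref{def:psdisk} demands of pseudo-disks. Second, the members of $\F$ share a common point, namely the origin, so $\F$ is stabbed and its intersection is non-empty; thus $\F$ is a family of stabbed pseudo-disks. Applying Theorem~\ref{thm:main} to $S$ and $\F$ yields a $3$-coloring of $S$ in which every hyperedge of $H(S,\F)$ of size at least $2$ is non-monochromatic.

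It remains to translate this back into the language of the statement. Any disk $D$ that contains the origin and at least two points of $S$ belongs to $\F$ and satisfies $|D\cap S|\ge 2$, so $D\cap S$ is a hyperedge of $H(S,\F)$ of size at least $2$, hence under the coloring above it contains two points of distinct colors; this is precisely the assertion of the corollary. I do not expect a genuine obstacle here: the content lies entirely in Theorem~\ref{thm:main}, and the passage from it to the corollary only uses the elementary facts that two circles cross at most twice and that disks through a common point share that point. The same deduction would also go through verbatim if one preferred to work with open disks, or to treat half-planes as degenerate disks, since these families are still stabbed pseudo-disks.
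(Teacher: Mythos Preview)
Your argument is correct and is exactly the derivation the paper has in mind: the paper does not spell out a proof of this corollary at all, merely remarking that it ``is easy to conclude'' from Theorem~\ref{thm:main}, and your verification that disks through the origin form a stabbed pseudo-disk family is precisely that easy deduction. One small caveat on your closing aside: open disks and half-planes are not compact (and half-planes are not bounded by Jordan curves), so they do not literally satisfy Definition~\ref{def:psdisk}; the remark is harmless but would need a word of justification if you kept it.
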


This corollary is already nontrivial for unit disks containing the origin. By a well-known duality concerning translates of regions (see e.g., \cite{surveycd}) we have:

\begin{corollary} It is possible to decompose a sufficiently thick covering of any region of radius at most one 
	by finitely many unit disks into three parts such that any two of the three parts cover the whole region.
\end{corollary}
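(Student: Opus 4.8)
The plan is to derive this from the preceding corollary via the standard point--region duality for translates. Let $R$ be a region of radius at most one, and place the origin $o$ at the centre of a disk of radius one containing $R$, so that $|x-o|\le 1$ for every $x\in R$. Let $D_1,\dots,D_n$ be the unit disks of the given covering, let $c_1,\dots,c_n$ be their centres, and set $S=\{c_1,\dots,c_n\}$, a finite point set. For a point $x\in R$ let $D_x$ denote the unit disk centred at $x$. Since unit disks are translates of the centrally symmetric unit disk, $c_i\in D_x$ if and only if $x\in D_i$; hence the points of $S$ lying in $D_x$ are exactly the centres of the covering disks that contain $x$, and there are at least two of them as soon as the covering is at least $2$-fold. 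Moreover $o\in D_x$ for every $x\in R$, because $|x-o|\le 1$ --- this is the one place where the bound on the radius of $R$ enters.

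Now apply the corollary to $S$: colour $c_1,\dots,c_n$ with three colours, say red, blue, and green, so that every disk that contains $o$ and at least two points of $S$ contains two points of different colours. Take as the three parts of the decomposition the three colour classes of disks $\{D_i : c_i \text{ is red}\}$, and similarly for blue and green. It remains to verify that any two of these parts cover $R$. Fix two of the three colours and let $C$ be the third one; take an arbitrary $x\in R$. The disk $D_x$ contains $o$ and (by $2$-fold-ness) at least two points of $S$, so it is non-monochromatic, i.e.\ the points of $S$ in $D_x$ do not all have colour $C$. Therefore some $c_i\in D_x$ carries one of the two chosen colours, and by the duality the corresponding disk $D_i$ contains $x$ and lies in one of the two chosen parts. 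As $x\in R$ was arbitrary, the union of the two chosen parts covers $R$.

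I expect no real obstacle here: the only step needing attention is the assertion that $o$ lies in every dual disk $D_x$, which is precisely the content of the hypothesis ``radius at most one'' (it fails for larger regions, where the dual disks need not have a common point), and the correct reading of ``non-monochromatic'' as ``not entirely of the leftover colour $C$'', which is exactly what the two-part covering condition requires. It is also worth remarking that ``sufficiently thick'' may be taken to mean simply ``at least $2$-fold'', since Theorem~\ref{thm:main} asserts $m(\F_\odot,3)=2$, so no stronger multiplicity assumption is needed.
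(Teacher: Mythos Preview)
Your argument is correct and is precisely the standard duality for translates that the paper invokes (without details) to deduce this corollary. Your observation that ``sufficiently thick'' may be taken as $2$-fold is also right, and your verification that the radius hypothesis is exactly what forces the dual disks $D_x$ to share the point $o$ is the key step.
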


We present two proofs for the upper bound $\chi_m(\F_\odot) \le 3$ of Theorem~\ref{thm:main}.
The first proof is a direct proof that uses some previous results about the so-called ``shrinkability'' of a family of pseudo-disks~\cite{buzaglo,pinchasi} that rely on a highly nontrivial sweeping machinery from~\cite{snoeyink}.\footnote{We would like to note that some of these papers (sometimes implicitly) assume stricter conditions, like no three pseudo-disks should pass through a point. We believe that these conditions could be removed with some extra care, but that would require to repeat the whole argument. Therefore, we do not go into details, especially since we also give a self-contained proof for our main result.}
For some of these results we provide new and simplified proofs.
Our second proof of the upper bound $\chi_m(\F_\odot) \le 3$ is completely self-contained and of a more combinatorial flavor.
It is based on an equivalence between hypergraphs defined by stabbed pseudo-disks and \emph{$ABAB$-free} hypergraphs. This equivalence also implies that $\chi_m(\F_\odot) \ge 3$ following a result from~\cite{abafree}.


\paragraph{$ABAB$-free hypergraphs.}
Let $l\geq 1$ be a number such that $2l$ is an integer.
We denote by $(AB)^l$ the alternating sequence of letters $A$ and $B$ of length $2l$. For example, $(AB)^{1.5}=ABA$ and $(AB)^2=ABAB$.

\begin{defi}[$(AB)^l$-free hypergraphs]
	\mbox{}
\begin{enumerate}
	\item Two subsets $A,B$ of an ordered set of elements form an \emph{$(AB)^l$-sequence} if there are $2l$ elements $a_1 < b_1 < a_2 < b_2 < \ldots$ such that $\{a_1,a_2,\ldots\} \subset A \setminus B$ and  $\{b_1,b_2,\ldots\} \subset B \setminus A$.
	\item A hypergraph with an ordered vertex set is {\em $(AB)^l$-free} if it does not contain two hyperedges $A$ and $B$ that form an $(AB)^l$-sequence.
	\item A hypergraph with an unordered vertex set is $(AB)^l$-free if there is an order of its vertices such that the hypergraph with this ordered vertex set is $(AB)^l$-free.
	\item The family of all $(AB)^l$-free hypergraphs is denoted by $\mathcal{(AB)}^l\emph{-free}$.
\end{enumerate}
\end{defi}

$(AB)^l$-free hypergraphs were introduced in~\cite{abafree}, where it was shown that $ABA$-free hypergraphs are equivalent to hypergraphs defined by pseudo-halfplanes.
It was also proved in~\cite{abafree} that $\chi_m(\HABA)=2$ (along with further strengthenings) and that $\chi_m(\HABAB)>2$.
\begin{thm}[\cite{abafree}]\label{thm:abab-no}
	For every $m \ge 2$ there exists an $ABAB$-free $m$-uniform hypergraph which is not $2$-colorable. 	
\end{thm}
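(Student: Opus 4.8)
\medskip
\noindent\textbf{Sketch of an approach.}
The plan is to prove this by a recursive construction that raises the uniformity by one at each step. For the base case $m=2$ I would take the triangle $K_3$: it is not $2$-colorable (being an odd cycle), and it is trivially $ABAB$-free, since an $ABAB$-sequence between two hyperedges of size $2$ would require two private elements in each, i.e.\ two disjoint edges, while any two edges of $K_3$ meet. (The $5$-cycle $C_5$ works just as well, with its natural cyclic order.)

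For the inductive step, suppose $H$ is an $m$-uniform $ABAB$-free hypergraph, on an ordered vertex set, that is not $2$-colorable. I would form $H^{+}$ by adjoining a set $U$ of $m+1$ fresh vertices, taking as hyperedges of $H^{+}$ the set $U$ itself together with all sets $e\cup\{u\}$ for $e$ a hyperedge of $H$ and $u\in U$. Then $H^{+}$ is $(m+1)$-uniform, and it is not $2$-colorable: in any $2$-coloring, if $U$ is monochromatic then the hyperedge $U$ is monochromatic; otherwise $U$ meets both color classes, and since $H$ (as an abstract hypergraph) is not $2$-colorable, the coloring restricted to $V(H)$ makes some hyperedge $e$ of $H$ monochromatic, and extending $e$ by a vertex $u\in U$ of the same color yields a monochromatic hyperedge $e\cup\{u\}$ of size $m+1$. (A pigeonhole over several disjoint copies of $H$ could replace the direct appeal to non-$2$-colorability, but it is not needed here.) Iterating this from $K_3$ produces, for every $m\ge 2$, an $m$-uniform hypergraph that is not $2$-colorable.

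The crux — and the step I expect to be the real obstacle — is to keep the construction \emph{$ABAB$-free}, that is, to exhibit a linear order on $V(H^{+})$ witnessing it. Placing all of $U$ after (or before) $V(H)$ does \emph{not} work: two hyperedges $e\cup\{u\}$ and $f\cup\{u'\}$ with $u\neq u'$ then have private parts $(e\setminus f)\cup\{u\}$ and $(f\setminus e)\cup\{u'\}$, and reading left to right one typically gets the forbidden alternation (a private vertex of $e$, then one of $f$, then $u$, then $u'$). So one must either interleave $U$ carefully among the vertices of $H$, or replace the simple "extend by one new vertex" gadget by a more structured one, arranged so that for every pair of hyperedges the two private-vertex sets end up nested rather than crossing; verifying this non-crossing condition inductively for all pairs is where essentially all the work should lie. (Geometrically the same phenomenon shows up for point sets with respect to \emph{bottomless rectangles} $[a,b]\times(-\infty,c]$: ordering the points by $x$-coordinate makes any such hypergraph $ABAB$-free, so it would also suffice to build, for each $m$, a point configuration admitting no $2$-coloring in which every bottomless rectangle with at least $m$ points is non-monochromatic, with a self-similar arrangement of point clusters as the natural candidate — the ordering difficulty then being hidden in the geometry.)
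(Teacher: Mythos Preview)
This theorem is cited from another paper and not proved here, but the construction behind it is recalled in Section~3 (as the base case $H_2$ of the $ABABA$-free family): take the full $m$-ary tree of depth $m-1$ and declare two kinds of hyperedges, the \emph{horizontal} ones (the set of children of each internal node, size $m$) and the \emph{vertical} ones (each root-to-leaf path, size $m$). Non-$2$-colorability is the standard argument: if no horizontal edge is monochromatic, every internal node has a child of each color, so starting from the root one can follow the root's color down to a leaf and obtain a monochromatic vertical edge. For $ABAB$-freeness the paper points to a specific \emph{siblings-first} ordering of the tree vertices (from the cited source), under which a short case analysis on pairs of hyperedges rules out any $ABAB$-pattern.

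Your proposal follows a genuinely different recursion, and you have correctly located the gap yourself: you give no ordering witnessing that $H^{+}$ is $ABAB$-free, and you note that the obvious ones fail. This is the entire difficulty, not a detail to be filled in later. The trouble with your gadget is that it pairs \emph{every} old hyperedge with \emph{every} new vertex: for old edges $e,f$ and distinct new vertices $u,u'$ you must simultaneously prevent $(e\setminus f)\cup\{u\}$ from crossing $(f\setminus e)\cup\{u'\}$ \emph{and} prevent $(e\setminus f)\cup\{u'\}$ from crossing $(f\setminus e)\cup\{u\}$, and doing this for all choices of $e,f,u,u'$ at once is an extremely rigid system --- one should not expect any linear order to satisfy it already at $m=2$. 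The tree construction sidesteps this precisely because a root-to-leaf path meets each sibling set in exactly one vertex, so the all-to-all pairing never arises. Your closing remark about bottomless rectangles is on target --- the tree hypergraph $H(m,m)$ can indeed be realized that way --- but the specific recursion you wrote down is not the one that works.
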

Here we extend these results and show that $\chi_m(\HABAB)=3$ by proving that $m(\HABAB,3)=2$.

\begin{thm}\label{thm:abab}
	Every $ABAB$-free hypergraph is proper $3$-colorable.
\end{thm}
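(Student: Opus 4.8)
The goal is to show that every $ABAB$-free hypergraph $H$ on an ordered vertex set admits a proper $3$-coloring (every hyperedge of size $\ge 2$ is non-monochromatic). My plan is to reduce to a structural/inductive argument on the ordered vertex set, exploiting the fact that the $ABAB$-free condition is a very strong restriction on how hyperedges can interleave.

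First I would try to reduce to the case where all hyperedges have size exactly $2$ or exactly $3$ is too optimistic; instead, the natural first move is a standard reduction: it suffices to handle hyperedges that are \emph{inclusion-minimal} among hyperedges of size $\ge 2$, since a proper coloring that breaks all minimal hyperedges breaks all of them. So I would replace $H$ by the hypergraph of its minimal hyperedges of size $\ge 2$; this hypergraph is still $ABAB$-free (being a sub-hypergraph on the same ordered vertex set). Moreover I would try to argue that for minimal hyperedges the $ABAB$-free condition forces a laminar-like or interval-like structure: two minimal hyperedges $A, B$ cannot form an $ABAB$-pattern, and minimality should further limit the ways $A\setminus B$ and $B\setminus A$ can alternate. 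I expect that minimal hyperedges, when restricted to a suitable interval of the order, behave like the intervals / pseudo-halfplane hyperedges for which $2$-colorability was already known, and the third color is needed only to glue together two such regimes.

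\medskip

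\textbf{Key steps.}

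\begin{enumerate}
\item \textbf{Reduce to minimal hyperedges.} Pass to the sub-hypergraph $H'$ consisting of all inclusion-minimal hyperedges of size $\ge 2$; a proper $3$-coloring of $H'$ is a proper $3$-coloring of $H$. Note $H'$ is $ABAB$-free.
\item \textbf{Extract a "leftmost-element" structure.} For each vertex $v$, look at the hyperedges of $H'$ whose minimum (in the order) is $v$. Using $ABAB$-freeness I would like to show that these behave in a nested/linear way, so that one can assign to each hyperedge a canonical "second point" or a canonical small witness pair, reducing the problem to coloring a structure of bounded local complexity.
\item \textbf{Two-color the "downward" part, recolor for the "upward" part.} Process vertices in the given order. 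Maintain a $2$-coloring that is valid for all hyperedges entirely "seen so far"; this is where I invoke the $\chi_m(\HABA)=2$ result — the hyperedges minimal with respect to one endpoint should form (a subfamily of) an $ABA$-free hypergraph, which is $2$-colorable. Then use the third color as a "reset" / repair color to handle the obstruction that prevents a global $2$-coloring, which by Theorem~\ref{thm:abab-no} genuinely exists.
\item \textbf{Verify no hyperedge is monochromatic.} Check that every minimal hyperedge of size $\ge 2$ contains two vertices receiving different colors, using the $ABAB$-free condition to rule out the bad configurations.
\end{enumerate}

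\medskip

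\textbf{Main obstacle.}

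The crux — and the step I expect to be hardest — is Step~2/3: finding the right structural decomposition of an $ABAB$-free hypergraph that simultaneously (a) is governed by the already-understood $ABA$-free (pseudo-halfplane) case on each piece, and (b) has only "boundedly much" interaction between pieces so that a single extra color suffices to reconcile them. A naive split of the order into a left half and a right half does not obviously work, because a hyperedge can straddle the cut in complicated ways; the $ABAB$-free condition must be used to show that straddling hyperedges are, in effect, controlled by their behaviour near the cut. I would therefore spend most of the effort proving a clean structural lemma of the form: \emph{in an $ABAB$-free hypergraph, one can linearly order the (minimal) hyperedges so that "nesting/crossing" is tame}, and then run a greedy or inductive $3$-coloring along that order, arguing that when a new hyperedge would become monochromatic we can always flip one of its vertices to the third color without spoiling previously satisfied hyperedges — the $ABAB$-free property being exactly what guarantees such a flip exists.
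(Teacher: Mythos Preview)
Your proposal is a plan, not a proof: Steps~2--4 are never made concrete, and the structural lemma you are hoping for (``an $ABAB$-free hypergraph decomposes into $ABA$-free pieces with bounded interaction, so two colors plus one reset color suffice'') is neither stated precisely nor argued. There is no mechanism offered for why a greedy $2$-coloring with occasional flips to a third color cannot cascade; the non-$2$-colorable examples from Theorem~\ref{thm:abab-no} are not ``local'' obstructions, so a single repair step has no reason to terminate. The reduction to minimal hyperedges in Step~1 is fine but does not by itself bring you closer to a graph: minimal hyperedges of an $ABAB$-free hypergraph can still be arbitrarily large.

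The paper's argument is quite different and avoids any decomposition into $ABA$-free pieces. Its key lemma is: \emph{every hyperedge of an $ABAB$-free hypergraph contains an unsplittable pair}, meaning a pair $\{p,q\}$ that can be added as a size-$2$ hyperedge while keeping the hypergraph $ABAB$-free. This is proved by a careful case analysis on consecutive pairs of a hyperedge (left-splittable vs.\ right-splittable, with a minimality choice to force a contradiction). Granting the lemma, one iteratively adds an unsplittable pair inside each hyperedge that does not yet contain a size-$2$ hyperedge; the process terminates, and in the resulting $ABAB$-free hypergraph $H'$ every hyperedge contains some size-$2$ hyperedge. The graph $G$ of size-$2$ hyperedges of $H'$ is itself $ABAB$-free; for size-$2$ edges this says exactly that placing the vertices on a line in the given order and drawing each edge as an arc above the line yields a crossing-free drawing, so $G$ is outerplanar and hence $3$-colorable. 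A proper $3$-coloring of $G$ is then automatically a proper coloring of $H$. The idea you are missing is not a decomposition of the vertex set but an \emph{augmentation} of the edge set by carefully chosen pairs, reducing the problem to $3$-coloring an outerplanar graph.
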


Theorem~\ref{thm:main} then follows from Theorems~\ref{thm:main} and~\ref{thm:abab-no} and an equivalence between $ABAB$-free hypergraphs and hypergraphs defined by stabbed pseudo-disks.

\begin{thm}\label{thm:equiv}
	A hypergraph is $ABAB$-free if and only if it can be realized by a family of stabbed pseudo-disks.
\end{thm}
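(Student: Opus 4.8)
The plan is to establish the two directions separately, in each case exploiting a natural order on the vertices coming from a radial/angular sweep around the stabbing point.

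For the \emph{only if} direction (realization from the abstract $ABAB$-free property), I would fix a point $o$ to serve as the common stab point and place the vertices of the hypergraph on a circle around $o$, in the cyclic order witnessing $ABAB$-freeness—with one subtlety: an $ABAB$-free hypergraph comes with a \emph{linear} order, so I first need to check that one can ``cut'' this linear order into a cyclic order (equivalently, that the linear order can be chosen so that no hyperedge wraps around badly), or alternatively work with an auxiliary vertex playing the role of the cut point. Having placed the points radially, for each hyperedge $A$ I would construct a pseudo-disk through $o$ that contains exactly the points of $A$: intuitively, take a region hugging $o$ and bulging out radially to swallow the points of $A$ and nothing else; the boundary of such a region crosses the circle in a controlled way, and the $ABAB$-freeness of the hyperedge family is exactly the combinatorial shadow of the condition that two such boundary curves cross at most twice. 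The main work here is to verify the pseudo-disk property: I must show that for any two hyperedges $A,B$, the two constructed boundary curves cross at most twice, and this is where the absence of an $ABAB$-alternation $a_1<b_1<a_2<b_2$ (together with both curves passing through $o$) translates into ``at most two crossings.''

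For the \emph{if} direction (every stabbed-pseudo-disk hypergraph is $ABAB$-free), I would take a family $\F$ of pseudo-disks with a common point $o$ and a point set $S$, and define the vertex order by sweeping a ray from $o$ and recording the angular order in which the points of $S$ are encountered (breaking ties by distance, or by a generic perturbation). I then need to argue that with respect to this order no two hyperedges $D_1\cap S$ and $D_2\cap S$ form an $ABAB$-sequence. Suppose for contradiction there are $a_1<b_1<a_2<b_2$ with $a_1,a_2\in D_1\setminus D_2$ and $b_1,b_2\in D_2\setminus D_1$. Since $o\in D_1\cap D_2$ and $D_1,D_2$ are (simply connected) pseudo-disks, connecting $o$ to each point by a radial segment and using that $a_i\in D_1, b_i\notin D_1$ etc., I would produce at least four alternations of the boundary curves $\partial D_1$ and $\partial D_2$ along the sweep circle—contradicting that pseudo-disk boundaries meet at most twice. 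The careful point is handling degeneracies (points on boundaries, multiple pseudo-disks through a point, collinear points through $o$), which I expect to dispose of by a perturbation argument, or by adopting from the start the convention, as the excerpt hints in its footnote, that one should allow such coincidences and argue slightly more carefully rather than assume general position.

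The main obstacle, in both directions, is the translation between ``at most two crossings of Jordan curves both passing through a fixed point'' and ``no $ABAB$-alternation among the points they separate.'' Concretely: a pseudo-disk through $o$ cuts the sweep circle into arcs, and I must show the points of $S$ inside it form at most two arcs in the cyclic order (so that with a suitable cut point it becomes one interval, or at worst behaves well linearly); conversely I must show any such ``at most two arcs'' configuration can be realized by an actual pseudo-disk through $o$. I would handle the forward implication via a topological argument (Jordan curve theorem plus the two-intersection hypothesis, using the radial segments from $o$ as a reference structure), and the backward implication by an explicit construction of the region as a ``radial union'' of sectors near $o$ smoothed into a pseudo-disk. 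A secondary obstacle is the linear-versus-cyclic order mismatch noted above; I expect it to be resolved cleanly because one is always free to place the cut of the cyclic order at a generic ray, and $ABAB$-freeness is robust enough that some choice of cut yields a valid linear witness.
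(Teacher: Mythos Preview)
Your outline heads in a reasonable direction, but the paper takes a substantially different route, and your direct approach has a genuine gap in each direction.

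\medskip
\textbf{How the paper proceeds.} The equivalence is not proved by a radial sweep at all; it is factored through an intermediate class, \emph{pseudo-parabolas} (families of $x$-monotone bi-infinite curves pairwise meeting at most twice, a hyperedge being the set of points on or above a given curve). The paper shows: (i) stabbed pseudo-disks can be perturbed to internally stabbed ones (Proposition~\ref{prop:perturb}); (ii) internally stabbed pseudo-disk arrangements are combinatorially equivalent to compactifications of even pseudo-parabola arrangements (Theorem~\ref{thm:equi}); and (iii) pseudo-parabola hypergraphs are exactly the $ABAB$-free hypergraphs (Lemmas~\ref{lem:parabolas2ABAB} and~\ref{lem:ABAB2parabolas}). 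Step (ii) is the heart of the matter and rests on Lemma~\ref{lem:ray}: from the stab point $o$ one can draw a Jordan arc to infinity through any prescribed point that crosses the boundary of \emph{every} pseudo-disk exactly once; iterating this produces a fan of internally disjoint arcs that ``unrolls'' the stabbed arrangement into a pseudo-parabola arrangement. The $x$-coordinate in the parabola picture then supplies the linear order for free.

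\medskip
\textbf{The gap in your ``if'' direction.} Your argument hinges on connecting $o$ to each of $a_1,b_1,a_2,b_2$ by a straight radial segment and reading off four alternations of $\partial D_1,\partial D_2$. But a stabbed pseudo-disk need not be star-shaped with respect to $o$: the segment $\overline{oa_1}$ can leave $D_1$ and re-enter it, so the angular position of $a_1$ tells you nothing about where $\partial D_1$ sits relative to the other points. What you would actually need is, for each point, a curve from $o$ to infinity through that point crossing every boundary exactly once---and that is precisely Lemma~\ref{lem:ray}, whose proof in turn relies on the simply-connectedness of arbitrary intersections and unions of stabbed pseudo-disks (Theorem~\ref{thm:psdisksimplyconn}). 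Once you have such curves, the natural next step is exactly the conversion to pseudo-parabolas that the paper carries out; a bare angular order does not do the job.

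\medskip
\textbf{The gap in your ``only if'' direction.} The ``bulge out radially to swallow the points of $A$'' construction is not specified enough to verify the pseudo-disk property; two such bulging regions can easily be drawn so that their boundaries meet four or more times even when $A,B$ have no $ABAB$-alternation, unless the bulges are coordinated globally. The paper's construction (Lemma~\ref{lem:ABAB2parabolas}) is explicit: draw each hyperedge as an $x$-monotone polygonal curve dipping below the axis exactly on its vertices, then remove all empty lenses; $ABAB$-freeness guarantees the resulting curves are $2$-intersecting. Compactifying these parabolas (after making them even, Proposition~\ref{prop:even}) yields the stabbed pseudo-disks. Your sketch lacks any mechanism playing the role of the lens-removal step.

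\medskip
A minor remark: the linear-versus-cyclic worry you raise for the ``only if'' direction is a non-issue (place the linearly ordered vertices on a short arc, not a full circle); conversely, in the ``if'' direction the worry largely evaporates once one observes that a cyclic $ABAB$ remains an $ABAB$ or $BABA$ under any cut, so it would suffice to rule out cyclic alternations---but that is exactly the step your radial-segment argument does not establish.
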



As a side question we consider $(AB)^l$-free hypergraphs for $l > 2$ and show using a construction similar to the one from~\cite{abafree} that $\chi_m(\HABABA)=\infty$.

\begin{thm}\label{thm:ababa-free}
	For every $c\ge 2$ and $m\ge 2$ there exists an $ABABA$-free $m$-uniform hypergraph which is not $c$-colorable. 	
\end{thm}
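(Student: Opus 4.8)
The goal is to produce, for every $c\ge 2$ and $m\ge 2$, an $m$-uniform $ABABA$-free hypergraph that is not $c$-colorable. The natural strategy is to mimic the construction behind Theorem~\ref{thm:abab-no} from~\cite{abafree}, but push it one "alternation" further: since $ABABA$-free is a strictly weaker restriction than $ABAB$-free, there is more room, and one expects not merely non-$2$-colorability but non-$c$-colorability for every $c$.

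Here is how I would set it up. I would work with an ordered vertex set that is a grid or, more flexibly, a recursively built set, and define the hyperedges to be "intervals with a controlled number of excursions." Concretely, recall that two sets $A,B$ on an ordered ground set form an $ABABA$-sequence iff one can find $a_1<b_1<a_2<b_2<a_3$ with the $a_i\in A\setminus B$ and $b_i\in B\setminus A$; so to be $ABABA$-free, any two hyperedges $A,B$ must, after deleting $A\cap B$, interleave in a "$ABAB$-or-simpler" pattern, i.e. they cross at most twice in the cyclic/linear sense. The cleanest way to guarantee this for a whole family is to take all hyperedges to be unions of at most two intervals of the order (i.e. each hyperedge is $I$ or $I_1\cup I_2$ with $I_1,I_2$ disjoint intervals): two such sets can produce at most an $ABAB$ pattern in their symmetric difference, never $ABABA$. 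That gives a large, flexible $ABABA$-free family to draw hyperedges from. Then I would invoke a known non-$c$-colorability result for hypergraphs whose edges are "double intervals" / unions of two intervals, or build one directly: the standard recursive construction producing $m$-uniform non-$c$-colorable hypergraphs (à la the Erd\H{o}s–Lov\'asz / Radhakrishnan–Srinivasan style, or the explicit "tree of intervals" construction used in~\cite{abafree}) can be realized using only single intervals at the top level and double intervals to stitch levels together, which is exactly what the $ABABA$-free condition permits. Finally I would trim/pad to make the hypergraph exactly $m$-uniform.

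In more detail, the plan for the recursion: build a sequence of hypergraphs $H_1,H_2,\dots$ where $H_{k+1}$ is obtained by taking $t$ disjoint ordered copies of $H_k$ laid consecutively on the line, keeping all their (interval-type) edges, and adding new edges that are formed by picking one vertex (or a short block) from each of several consecutive copies — arranged so each new edge is a union of at most two intervals of the global order. A Kneser/covering argument (or a direct counting argument: if the number of copies $t$ and the branching are chosen as functions of $c$ and the current level, a pigeonhole on color classes forces a monochromatic new edge) then shows $\chi(H_K)>c$ for $K$ large enough in terms of $c$. Uniformity is handled by choosing all the "blocks" to have the same size and all top-level edges to be intervals of the same length, the common value being $m$; any short leftover edges can be padded with dummy vertices placed so as not to create new $ABABA$ patterns (padding inside an existing interval keeps it an interval).

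The main obstacle, I expect, is the bookkeeping that simultaneously (i) keeps every hyperedge a union of at most two order-intervals throughout the recursion — in particular the "stitching" edges that connect several copies must not fragment into three or more intervals, which constrains how the copies may be interleaved — and (ii) still leaves enough combinatorial freedom for the pigeonhole/Kneser step to force a monochromatic edge at every color count $c$. Balancing these is the crux; the uniformity adjustment is comparatively routine. If the "two-interval" family turns out to be too rigid to get past a fixed $c$, the fallback is to allow hyperedges that are genuinely $ABAB$-patterned but never $ABABA$ — i.e. control crossings directly via the alternation count rather than via the number of intervals — which is a strictly larger family and should suffice; verifying $ABABA$-freeness then becomes a direct check that no pair of chosen edges admits five alternating witnesses, which one arranges by making the construction "laminar up to one crossing" at each level.
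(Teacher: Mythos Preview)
Your two-interval approach has a fatal flaw for $m\ge 3$. It is true that a hypergraph whose hyperedges are each a union of at most two order-intervals is $ABABA$-free, but every such hypergraph is $2$-colorable once $m\ge 3$: color the $i$th vertex by $i\bmod 2$. Any hyperedge $E=I_1\cup I_2$ with $|I_1|+|I_2|=m\ge 3$ satisfies $\max(|I_1|,|I_2|)\ge 2$, so $E$ contains two consecutive vertices, which receive different colors. Hence there is no ``known non-$c$-colorability result for hypergraphs whose edges are unions of two intervals'' in uniformity $m\ge 3$, and your recursion cannot even produce a non-$2$-colorable example inside that family. Your fallback (``control crossings directly via the alternation count'') is the right instinct, but as written it is only a slogan; the entire content of the theorem is to exhibit a concrete family and a concrete order for which the alternation check succeeds.

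The paper does exactly that, and quite differently from your outline. It builds $H_c$ recursively on the vertex set of the full $n_{c-1}$-ary tree of depth $m-1$ (where $n_{c-1}=|V(H_{c-1})|$): the \emph{vertical} hyperedges are the root-to-leaf paths, and on each sibling set (the $n_{c-1}$ children of an internal node) it places a copy of $H_{c-1}$, whose edges become the \emph{horizontal} hyperedges. Non-$c$-colorability is an easy induction: the root's color must recur among its children (otherwise those children carry a proper $(c-1)$-coloring of $H_{c-1}$), and iterating gives a monochromatic root-to-leaf path. For $ABABA$-freeness the paper takes the \emph{DFS order} on the tree and checks the pair types directly: two vertical edges share a prefix and then lie in disjoint subtrees (even $ABA$-free); two horizontal edges from different parents are disjoint with one entirely inside a single gap of the other ($ABAB$-free), while from the same parent the inductive order handles them; a vertical and a horizontal edge interleave in at most an $ABAB$ pattern. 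Note that in DFS order neither a root-to-leaf path nor a sibling set is a union of two intervals---each can split into many blocks---so the argument genuinely lives at the alternation level, which is precisely where your plan stops short.
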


\paragraph{Further related work.}
As we mentioned before, $\chi_m(\F_\square)=2$, where $\F_\square$ denotes the family of axis-parallel squares. By affine transformations the same result holds for families of homothets of a fixed parallelogram.
It is also known that $\chi_m(\F_\Delta)=2$, for each family $\F_\Delta$ of homothets of a given triangle~\cite{octant}. There are also good estimates of $m(\F_\Delta,2)$, namely, $5 \le m(\F_\Delta,2) \le 9$~\cite{oktans9}.
P\'alv\"olgyi and T\'oth~\cite{PT10} proved that for a family $\F$ of translates of a given open convex polygon $\chi_m(\F)=2$.
Perhaps the most interesting open problem concerning $2$-coloring is whether the same bound holds for homothets of a given convex polygon. P\'alv\"olgyi and Keszegh~\cite{3propercol} showed that $\chi_m \le 3$ in this case.
For further results about translates and homothets of convex shapes, see e.g., \cite{surveycd, octant, homotsquare, 3propercol} and the webpage~\cite{cogezoo}.

\paragraph{Outline.}
In Section~\ref{sec:ABAB-free} we prove that every $ABAB$-free hypergraph is $3$-colorable.
Then, in Section~\ref{sec:ABABA-free} we consider $ABABA$-free hypergraphs and prove that for every $c \ge 2$ there are such hypergraphs which are non-$c$-colorable.
In Section~\ref{sec:preliminaries} we recall some needed properties of pseudo-disks.
These properties are used in Section~\ref{sec:direct} to provide a direct proof of the upper bound in Theorem~\ref{thm:main}.
The equivalence between $ABAB$-free hypergraphs and hypergraphs defined by stabbed pseudo-disks is proved in Section~\ref{sec:parabolasgeom}.
We conclude with some remarks and open problems in Section~\ref{sec:conclusions}.

\section{Coloring $ABAB$-free hypergraphs}
\label{sec:ABAB-free}

In this section we prove Theorem \ref{thm:abab} which says
that every $ABAB$-free hypergraph is $3$-colorable.

Let $H$ be an $ABAB$-free hypergraph. A pair of vertices of $H$ is called {\em unsplittable} if by adding this pair as a hyperedge of size two to $H$ we get an $ABAB$-free hypergraph. For a pair of vertices $E=\{p,q\}$ we say that a hyperedge $B$ {\em splits} this pair if $E$ and $B$ form an $EBEB$- or $BEBE$-sequence.

\begin{lemma}
	\label{lem:unsplittable}
	Every hyperedge of an $ABAB$-free hypergraph contains a pair of vertices that is unsplittable.
\end{lemma}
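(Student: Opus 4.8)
The plan is to fix a hyperedge $E_0$ of the $ABAB$-free hypergraph $H$ (with respect to a fixed vertex order witnessing the $ABAB$-freeness) and to locate inside it a pair $\{p,q\}$ that no hyperedge can split. The natural candidate is a pair of vertices of $E_0$ that are ``extreme'' in a suitable sense: since a hyperedge $B$ splits $\{p,q\}$ only if it contains, alternating with $p$ and $q$, elements strictly between and/or outside them, making $p$ and $q$ as close as possible in the order should make splitting hard. So first I would try the pair of \emph{consecutive} elements of $E_0$, i.e. $p<q$ in $E_0$ with no element of $E_0$ strictly between them. With such a pair, any splitting hyperedge $B$ forming an $EBEB$- or $BEBE$-sequence must use one of $p,q$ twice, which is impossible; the only way $\{p,q\}$ and $B$ can alternate four times is with pattern $pBqB$ or $BpBq$ (up to swapping $p,q$), so $B$ must contain an element $<p$ (or one strictly between $p$ and $q$) and an element $>q$ (or between), in the right arrangement.

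The key step is then to argue that the existence of such a $B$, together with the hyperedge $E_0 \supseteq \{p,q\}$, already forces an $ABAB$-sequence in $H$ — contradicting $ABAB$-freeness. Here I would use that $E_0$ contains $p$ and $q$ but (by choice) no vertex of $E_0$ lies strictly between them, and combine the elements of $B$ that witness the split with elements of $E_0 \setminus B$ outside the interval $[p,q]$. The delicate point is that to extract an $ABAB$-sequence from $E_0$ and $B$ I need witnesses in $E_0 \setminus B$ and in $B \setminus E_0$; an element of $B$ used to split could also lie in $E_0$, so ``consecutive in $E_0$'' may not be enough and I may instead need to choose $\{p,q\}$ to be consecutive among all vertices of $H$ that lie in $E_0$ but — more carefully — I should take $p$ and $q$ so that the ``gap'' between them is minimal with respect to a cleverly chosen weighting, e.g. minimize over all pairs in $E_0$ the number of vertices strictly between $p$ and $q$, breaking ties to push the pair toward one end. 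This is essentially an extremal/minimal-counterexample argument inside the single hyperedge $E_0$.

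Concretely, I expect the argument to run: suppose for contradiction that every pair in $E_0$ is split by some hyperedge; pick $p<q$ in $E_0$ consecutive in $E_0$ and let $B$ split $\{p,q\}$; since $B$ cannot alternate with $\{p,q\}$ using an element of $E_0$ strictly between $p,q$ (there is none), $B$ must reach outside $[p,q]$ on (at least) one side, say $B$ contains some $b^- < p$ and the pattern is $b^-\,p\,b'\,q$ with $b' \in B$, $p<b'<q$, $b' \notin E_0$ (as nothing of $E_0$ is strictly between $p,q$); now take any $a^- \in E_0$ with $a^- \le p$ — e.g. $a^-=p$ itself won't do since we need $a^- \notin B$, so instead use that $p \in E_0 \setminus B$ or find the $E_0$-predecessor of $p$ — and similarly an element of $E_0$ on the far side, and read off $a < b < a' < b'$ giving an $ABAB$-sequence with $A = E_0$, $B = B$. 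The main obstacle, and the place where I'd have to be most careful, is the bookkeeping of which of the four alternating witnesses lie in $A \setminus B$ versus $B \setminus A$: I must guarantee that the two ``$A$'' witnesses are genuinely in $E_0 \setminus B$ and the two ``$B$'' witnesses in $B \setminus E_0$, which is exactly what the minimality of the chosen pair $\{p,q\}$ (no $E_0$-vertex strictly between them) is there to ensure, possibly supplemented by choosing $\{p,q\}$ to also be extreme toward the relevant end of $E_0$. I would handle the two cases (splitting pattern reaching out on the left, i.e. $EBEB$ read as $\cdots<p<\cdots<q<\cdots$, versus on the right) symmetrically.
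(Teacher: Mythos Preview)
Your instinct to look at consecutive pairs of $E_0$ is exactly right, but the proof strategy you outline---deriving an $ABAB$-sequence between $E_0$ and a single splitting hyperedge $B$---cannot be completed. Take $A=E_0=\{1,3,5\}$ and $B=\{2,5\}$: then $B$ right-splits the consecutive pair $\{1,3\}$ via $1<2<3<5$, yet $B\setminus A=\{2\}$ has only one element, so no $ABAB$- or $BABA$-sequence between $A$ and $B$ exists. Your ``delicate point'' is fatal, and neither pushing the pair toward an end of $E_0$ nor minimizing the gap length rescues it: the leftmost consecutive pair is the one that fails here. What this example shows is that a single splitter can be perfectly compatible with $E_0$; the obstruction must come from elsewhere.

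The paper's proof uses the same starting point (consecutive pairs, and the observation that the leftmost pair can only be right-split and the rightmost only left-split), but the extremal choice and the contradiction are both different from what you sketch. One introduces, for each one-sided consecutive pair $E$, a witness splitter $B_E$ together with an ``inner'' point $i(E)\in B_E\setminus A$ lying between the two vertices of $E$ and an ``outer'' point $o(E)$ which is forced to lie in $A\cap B_E$. The extremal pair $E$ is chosen to minimise the number of vertices between $i(E)$ and $o(E)$. One then looks at the consecutive pair $F$ of $A$ ending at $o(E)$, argues (using the minimality) that $F$ is one-sided of the opposite type with its own witness $B_F$, and finally obtains a $B_FB_EB_FB_E$-sequence from $o(F),i(E),i(F),o(E)$. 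The contradiction is thus between \emph{two splitters} $B_E$ and $B_F$, not between $E_0$ and a splitter; this is the key idea your proposal is missing.
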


\begin{proof}
	Let $A$ be a hyperedge of an $ABAB$-free hypergraph $H$.
	If $A$ is of size two, then its vertices form an unsplittable pair, for otherwise there would be a hyperedge $B$ that splits $A$ and this would contradict that $H$ is $ABAB$-free.
	
	Thus we may assume that $A$ is of size at least $3$. 
	Consider a left-to-right order of the vertices of $H$ by which $H$ is $ABAB$-free.
	We write $a<b$ if $a$ and $b$ are two vertices of $H$ such that $a$ is to the left of $b$.
	Denote the vertices of $A$ according to their order by $A=\{a_1,a_2,\dots a_k\}$. Two such vertices are called \emph{consecutive} if one follows the other in this order. We will prove that one of the consecutive pairs of vertices of $A$ is an unsplittable pair.
	
	Assume on the contrary that none of the consecutive pairs is unsplittable. A consecutive pair $E=\{a_i,a_{i+1}\}$ is \emph{left-splittable} (resp., \emph{right-splittable}) if there exists a hyperedge $B\in H$ such that they together form a $BEBE$-sequence (resp., $EBEB$-sequence). By our assumption every consecutive pair is either left-splittable or right-splittable or both. A consecutive pair is called \emph{one-sided} splittable (or simply \emph{one-sided}) if it is not both left-splittable and right-splittable. Notice that the leftmost consecutive pair $C=\{a_1,a_2\}$ cannot be left-splittable. Indeed, a hyperedge $B$ left-splitting it would also form a $BCBC$-sequence, which is a contradiction. Similarly, the rightmost consecutive pair cannot be right-splittable. Thus the family of one-sided splittable pairs is non-empty. 
	\begin{figure}
		\begin{center}
			\includegraphics{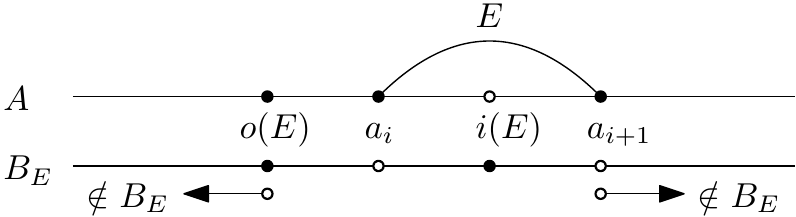}
			\caption{$E$ is an only left-sided pair with witness hyperedge $B_E$.}
			\label{fig:leftsidedpairdef}
		\end{center}
	\end{figure}
	
	For each only left-sided pair $E=\{a_i,a_{i+1}\}$ let $B_E$ be a hyperedge that together with $E$ forms a $B_EEB_EE$-sequence, see Figure \ref{fig:leftsidedpairdef}. The existence of this sequence implies that $a_i,a_{i+1}\in E\setminus B_E$ and that there is a vertex $i(E)\in B_E\setminus A$ among the vertices of $H$ between $a_i$ and $a_{i+1}$ (in the left-to-right order of the vertices of $H$). The leftmost vertex of $B_E$ is denoted by $o(E)$. As $E$ is left-sided and $B_E$ is a witness for that, it follows that $o(E) < a_i$. Also, $o(E) \in A \cap B_E$ since if $o(E) \notin A$ then $o(E),a_i,i(E),a_{i+1}$ would form a $B_EAB_EA$-sequence, a contradiction. Note that there is no vertex in $B_E$ to the left of $o(E)$ by definition and there is no vertex in $B_E$ to the right of $a_{i+1}$, for otherwise $E$ would also be a right-sided pair.
	
	Similarly, for each only right-sided pair $E=\{a_i,a_{i+1}\}$ take a witness hyperedge $B_E$ with which it forms an $EB_EEB_E$-sequence. Thus $a_i,a_{i+1}\in A\setminus B_E$ and there is a vertex $i(E)\in B_E\setminus A$ among the vertices of $H$ between $a_i$ and $a_{i+1}$. In this case denote by $o(E)$ the rightmost vertex of $B_E$. Therefore, $a_{i+1} < o(E)$ and, as before, we have that $o(E)\in A\cap B_E$.
	
	Among all one-sided pairs of $A$ let $E=\{a_i,a_{i+1}\}$ be the pair with the least number of vertices of $H$ between $i(E)$ and $o(E)$. Without loss of generality we may assume that $E$ is only right-sided.
	
	\begin{figure}
		\begin{center}
			\includegraphics{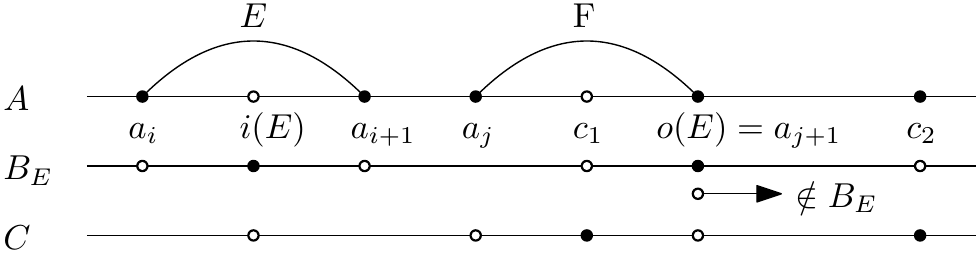}
			\caption{If $F$ is right-sided then $i(E),c_1,o(E),c_2$ form a $B_ECB_EC$-sequence.}
			\label{fig:Fnotrightsided}
		\end{center}
	\end{figure}

	As $o(E)\in A$ and $a_{i+1} < o(E)$, $o(E)=a_{j+1}$ for some $j>i$. Consider the pair $F=\{a_j,a_{j+1}\}$ (note that $a_j$ may coincide with $a_{i+1}$). We claim that $F$ cannot be a right-sided pair. Indeed, assume to the contrary that there exists a hyperedge $C$ and two vertices $c_1,c_2\in C\setminus F$ such that $a_j<c_1<a_{j+1}<c_2$ (and therefore $a_j,c_1,a_{j+1},c_2 $ form an $FCFC$-sequence), see Figure~\ref{fig:Fnotrightsided}. Since $o(E)=a_{j+1}<c_2$ and $o(E)$ is the rightmost vertex of $B_E$, we also have $c_2\notin B_E$. Also, $i(E)\notin C$, otherwise $i(E),a_j,c_1,a_{j+1}$ would form a $CACA$-sequence, a contradiction. Similarly, $c_1\notin B_E$, otherwise $a_i,i(E),a_{i+1},o(E)$ would form an $AB_EAB_E$-sequence. However, then $i(E),c_1, a_{j+1},c_2$ form a $B_ECB_EC$-sequence, which is again a contradiction.
	
	Therefore, $F$ is an only left-sided pair and thus $o(F)<a_j$. See Figure \ref{fig:ababfreeproof}.  Furthermore, $o(F)\le a_i$ for otherwise there would be less vertices of $H$ between $o(F)$ and $i(F)$ than there are between $o(E)$ and $i(E)$, contradicting our choice of $E$. We have that $o(F)\in B_F\cap A$ for otherwise $o(F),a_j,i(F),a_{j+1}$ would be a $B_FAB_FA$-sequence. Furthermore, $o(F)\notin B_E$ since $o(F) \le a_i$ and no vertex of $B_E$ is left of $a_i$. Similarly, $i(E)\ne B_F$ as otherwise $i(E),a_j,i(F),o(E)$ would form a $B_FAB_FA$-sequence. Finally, $i(F)\notin B_E$ for otherwise $a_i,i(E),a_{i+1},i(F)$ would form an $AB_EAB_E$-sequence.	
	
	\begin{figure}
		\begin{center}
			\includegraphics{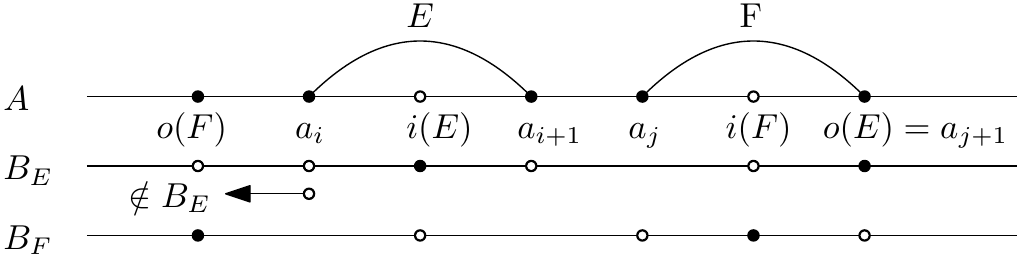}
			\caption{The vertices $o(F),i(E),i(F),o(E)$ form a $B_FB_EB_FB_E$-sequence.}
			\label{fig:ababfreeproof}
		\end{center}
	\end{figure}
	
	Thus, the vertices $o(F),i(E),i(F),o(E)$ form a $B_FB_EB_FB_E$-sequence, leading to the final contradiction. 
\end{proof}

\begin{proof}[Proof of Theorem \ref{thm:abab}]
	Let $H$ be an $ABAB$-free hypergraph.
	We call a hyperedge of size at least $3$ {\em unhit} if it does not contain as a subset a hyperedge of size $2$. 
	Starting from $H$ we create a series of hypergraphs as follows.
	If the current hypergraph contains an unhit hyperedge, then by Lemma~\ref{lem:unsplittable} this hyperedge contains an unsplittable pair which we add as a new hyperedge and obtain the next hypergraph in our series. 
	Since $H$ has a finite number of hyperedges and every hypergraph has one less unhit hyperedge than its preceding hypergraph, we get a finite series of hypergraphs. Let $H'$ be the last hypergraph in this series.
	
	Let $G$ be the graph that is induced by the hyperedges of $H'$ of size two. Note that every hyperedge of $H'$ contains at least one edge of $G$. Therefore, a proper coloring of $G$ is a proper coloring of $H$.
	The graph $G$ also has the $ABAB$-free property.
	Consider the following drawing of $G$. Its vertices are represented by distinct points on a horizontal line according to their $ABAB$-free order and its edges are drawn as circular arcs above the line.
	Since $G$ is $ABAB$-free its drawing does not contain crossing edges. Furthermore, this drawing of $G$ is outerplanar. Since every outerplanar graph is $3$-colorable, this completes the proof.
\end{proof}

As mentioned in the introduction, using Theorem \ref{thm:equiv} this also proves the upper bound of Theorem \ref{thm:main}.

\section{$ABABA$-free hypergraphs}
\label{sec:ABABA-free}

It is shown in~\cite{abafree} that there are $ABAB$-free hypergraphs that are not $2$-colorable. Here we extend this construction by proving  Theorem~\ref{thm:ababa-free} by showing that there are non-$c$-colorable $ABABA$-free hypergraphs for every $c \ge 2$.

We will use \emph{depth first search} (DFS)
to traverse the vertices (nodes) of a directed rooted tree.
The order in which a DFS search visits the vertices is called a DFS order.
The root of the tree is the first vertex that is visited in this search, and thus the first vertex in the DFS order.
In each subsequent step, we take the last (already visited) vertex in the DFS order that has a yet unvisited child, and visit one such child.

\begin{proof}[Proof of Theorem~\ref{thm:ababa-free}]
	Let $T(a,b)$ denote a full $a$-ary tree of depth $b-1$. That is, a tree in which every internal vertex has $a$ children and every leaf is at distance $b-1$ from the root of the tree (i.e., the path connecting the root to the leaf contains $b$ vertices). 
	
	Let the hypergraph $H(a,b)$ be defined as follows. Its vertex set is the vertex set of $T(a,b)$, the set of children of each internal vertex is a {\em horizontal} hyperedge of size $a$ and the set of vertices of every path from the root to a leaf is a {\em vertical} hyperedge of size $b$. 
	
	It is easy to see that in every coloring of the vertices of $H(a,b)$ with two colors there is either a monochromatic horizontal hyperedge (of size $a$) or a monochromatic vertical hyperedge (of size $b$). Therefore, $H_2 := H(m,m)$ is an $m$-uniform non-$2$-colorable hypergraph.	Let $n_2$ denote the number of vertices of $H_2$.
	
	For $c>2$ we define a non-$c$-colorable $m$-uniform hypergraph $H_c$ recursively.
	The vertices of $H_c$ are the vertices of $T(n_{c-1},m)$.
	The hyperedges of $H_c$ are defined as follows.
	For each set of $m$ vertices that lie on a path from the root of the tree to one of its leaves define a \emph{vertical} hyperedge; for each set of $n_{c-1}$ children of an internal vertex of $T(n_{c-1},m)$ define a hypergraph isomorphic to $H_{c-1}$ and add all of its hyperedges as \emph{horizontal} hyperedges of $H_c$.
	
	It follows from the definition that $H_c$ is an $m$-uniform hypergraph for every $c \ge 2$.
	It remains to show that $H_c$ is non-$c$-colorable and $ABABA$-free.
	
	\begin{proposition} \label{prop:non-c-colorable}
		$H_c$ is non-$c$-colorable for every $c \geq 2$.
	\end{proposition}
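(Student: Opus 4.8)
The plan is to prove the statement by induction on $c$, following the recursive definition of $H_c$, and in the inductive step to walk down the underlying tree along a monochromatic path.

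\textbf{Base case $c = 2$.} Here $H_2 = H(m,m)$ is built on the full $m$-ary tree of depth $m-1$. Suppose $\phi$ is a $2$-coloring of $H_2$ with no monochromatic horizontal hyperedge. Then for every internal vertex both colors appear among its $m \ge 2$ children; in particular, each internal vertex $v$ has a child whose color equals $\phi(v)$. Starting at the root $r$ and repeatedly choosing such a same-colored child, after $m-1$ steps we reach a leaf, obtaining a root-to-leaf path on $m$ vertices all of color $\phi(r)$. This path is a vertical hyperedge, and it is monochromatic. Hence every $2$-coloring of $H_2$ has a monochromatic hyperedge, i.e.\ $H_2$ is non-$2$-colorable.

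\textbf{Inductive step.} Assume $H_{c-1}$ is non-$(c-1)$-colorable, and suppose towards a contradiction that $\phi$ is a proper $c$-coloring of $H_c$ (no hyperedge monochromatic). Let $r$ be the root of $T(n_{c-1},m)$ and put $\gamma := \phi(r)$. The key claim is that \emph{every non-leaf vertex $v$ with $\phi(v) = \gamma$ has a child $v'$ with $\phi(v') = \gamma$}. Indeed, the set of children of $v$ carries, by construction, a copy of $H_{c-1}$ whose hyperedges are all horizontal hyperedges of $H_c$. If no child of $v$ had color $\gamma$, then the restriction of $\phi$ to the children of $v$ would be a coloring of this copy of $H_{c-1}$ using only the $c-1$ colors different from $\gamma$; by the inductive hypothesis some hyperedge of the copy would be monochromatic, and this would be a monochromatic horizontal hyperedge of $H_c$, contradicting the choice of $\phi$. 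Applying the claim repeatedly from $r$ produces a root-to-leaf path $r = v_0, v_1, \ldots, v_{m-1}$ (the tree has depth $m-1$, so this path has exactly $m$ vertices) all of whose vertices have color $\gamma$; this is a monochromatic vertical hyperedge of $H_c$, the desired contradiction. Therefore $H_c$ is non-$c$-colorable, completing the induction.

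The only delicate point is the invocation of the inductive hypothesis inside the key claim: one must observe that forbidding the ancestor's color $\gamma$ on a children set leaves precisely $c-1$ colors available, so a would-be proper $c$-coloring of $H_c$ would restrict to a proper $(c-1)$-coloring of an embedded copy of $H_{c-1}$, which does not exist. Everything else is routine bookkeeping about the tree's depth and the fact that both horizontal and vertical hyperedges have size $m$, matching the claimed $m$-uniformity.
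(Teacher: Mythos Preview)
Your proof is correct and follows essentially the same approach as the paper: an induction on $c$ where the inductive step walks down the tree along a $\gamma$-colored path, using that the children of any internal vertex carry a copy of $H_{c-1}$ which cannot be properly colored with the remaining $c-1$ colors. The only difference is cosmetic: you spell out the base case argument for $H_2$ explicitly, whereas the paper merely refers back to its earlier observation about $H(a,b)$.
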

	
	\begin{proof}
		We prove by induction on $c$.
		We observed above that $H_2$ is non-$2$-colorable.
		Suppose that $c > 2$ and $H_{c-1}$ is non-$(c-1)$-colorable. 
		Assume by contradiction that $H_c$ is $c$-colorable and consider a proper coloring of its vertices by $c$ colors. 
		Recall that the vertices of $H_c$ are the vertices of $T(n_{c-1},m)$ and assume, without loss of generality, that the color of the root is red.
		Then, the color of one of its children must also be red, for otherwise, there is a $(c-1)$-coloring of the copy of $H_{c-1}$ induced by these children.
		Similarly, one of the children of that red child must also be colored red, etc., yielding a path of red vertices from the root to one of the leaves. However, this implies a monochromatic red hyperedge of $H_c$ which is a contradiction.
	\end{proof}
	
	\begin{proposition} \label{prop:ABABA-free}
		$H_c$ is $ABABA$-free.
	\end{proposition}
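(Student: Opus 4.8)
The plan is to order the vertices of $H_c$ by a depth-first search order of the rooted tree on which $H_c$ is built ($T(m,m)$ for $c=2$, and $T(n_{c-1},m)$ for $c>2$), in which the children of each internal vertex are visited in a judiciously chosen order, and then to rule out an $ABABA$-sequence by a short case analysis on the types of the two hyperedges. I proceed by induction on $c$. For $c>2$, each horizontal hyperedge of $H_c$ is a hyperedge of a copy of $H_{c-1}$ planted on the set of children of some internal vertex $u$; by the inductive hypothesis this copy is $ABABA$-free, so there is an order of the $n_{c-1}$ children of $u$ witnessing this, and I let the DFS visit the children of $u$ in that order. (For $c=2$ there is a unique horizontal hyperedge at each internal vertex, so the child order is irrelevant; this is the base of the induction.) The only property of a DFS order I use is the standard one: the vertex set of the subtree rooted at any vertex $v$ is an interval of the order that begins with $v$, and inside this interval the children of $v$ appear in the chosen order, each immediately followed by its own subtree. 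In particular, along a root-to-leaf path $v_0,\dots,v_{m-1}$ the vertices occur in increasing order, and no path vertex lies strictly between $v_i$ and $v_{i+1}$ in the order.

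Now suppose hyperedges $A$ and $B$ form an $ABABA$-sequence; I derive a contradiction in each case. If $A$ and $B$ are both \emph{vertical}, they are distinct root-to-leaf paths agreeing on a common prefix and then branching, at their last common vertex $z$, into two distinct children of $z$; the subtrees of these two children are disjoint intervals, one entirely before the other, so every vertex of $A\setminus B$ precedes every vertex of $B\setminus A$ (or conversely), and the alternation cannot even reach $ABA$. If $A$ and $B$ are both \emph{horizontal}, carried on the children of internal vertices $u_A$ and $u_B$, and $u_A=u_B$, then $A\cup B$ is a set of children of $u_A$ and the DFS order restricted to it is exactly the chosen $ABABA$-free order, a contradiction; if $u_A\ne u_B$ their subtrees are disjoint or nested, and in either case $A\cap B=\emptyset$ and $B$ lies inside a single interval containing no vertex of $A$ except possibly its first element, so between the first and last vertex of $B$ there is no vertex of $A\setminus B$ and the alternation stops at $ABA$.

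The main case is $A$ vertical, $A=(v_0,\dots,v_{m-1})$, and $B$ horizontal, carried on the children of an internal vertex $u$. If $u$ is not on the path, then $A\cap B=\emptyset$ and $u$ lies in the subtree of an off-path child of some $v_i$, so $B$ lies in an interval disjoint from the path that sits strictly between $v_i$ and $v_{i+1}$ (or after $v_{m-1}$); hence the alternation is at most $ABA$. If $u=v_i$ for some $i<m-1$, then the vertices of $B$ other than $v_{i+1}$ split into those on children of $v_i$ preceding $v_{i+1}$ — which lie strictly between $v_i$ and $v_{i+1}$ — and those on children of $v_i$ following $v_{i+1}$ — which lie after the whole subtree of $v_{i+1}$, hence after $v_{m-1}$ — while no such vertex of $B$ interleaves with $v_{i+1},\dots,v_{m-1}$ inside the subtree of $v_{i+1}$. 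Reading off the $A/B$ labels of the vertices of $A\cup B$ in DFS order therefore yields a pattern of the shape $A\cdots A\,B\cdots B\,A\cdots A\,B\cdots B$ (with $v_{i+1}$, if it occurs as a label, contributing to the middle $A$-block): since every vertex of $A$ lies inside the subtree of $v_i$ there is no $A$-vertex after the last $B$-vertex, and there is no $B$-vertex before $v_0$, so the alternation never exceeds $ABAB$. In all cases $A$ and $B$ fail to form an $ABABA$-sequence, completing the induction.

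The main obstacle is this last case: it is the only one in which the alternation can actually reach length four, so one has to verify carefully that the vertices of $B$ sitting on the siblings of $v_{i+1}$ can appear only in two ``pockets'' — immediately after $v_i$, or after the entire subtree hanging below $v_{i+1}$ — and never past the end of the path, which is exactly what blocks the fifth alternation. A secondary point requiring care is the inductive bookkeeping: the DFS must be set up so that at each internal vertex the children are visited in an order realizing the planted copy of $H_{c-1}$ as $ABABA$-free, and one must observe that interleaving the intermediate subtrees between consecutive children does not change the order induced on those children.
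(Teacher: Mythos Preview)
Your approach is the paper's: induct on $c$, order the vertices by a DFS in which the children of each internal vertex are visited in the $ABABA$-free order supplied by the inductive hypothesis, then do a case analysis on the types of $A$ and $B$. Two slips remain.

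First, the case $A$ horizontal, $B$ vertical is missing. Since an $ABABA$-sequence is not symmetric in $A$ and $B$, you cannot absorb this into the case you did treat without comment. Running your own analysis with the roles swapped gives the block pattern $B\cdots B\,A\cdots A\,B\cdots B\,A\cdots A$, which does not even contain $ABAB$; the paper records this case separately.

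Second, in the subcase $u=v_i$ your justification ``since every vertex of $A$ lies inside the subtree of $v_i$'' is false whenever $i>0$: the vertices $v_0,\dots,v_{i-1}$ are ancestors of $v_i$, not in its subtree. The conclusion you want---that no $A$-vertex comes after the final $B$-block---is still correct, but for a different reason: the last $A$-vertex is $v_{m-1}$, which lies in the subtree of $v_{i+1}$, while every late-sibling $B$-vertex lies strictly after that subtree in the DFS order.
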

	
	\begin{proof}
		We prove by induction on $c$.
		First, we show that $H_2$ is $ABABA$-free,\footnote{We note that $H_2$ is in fact even $ABAB$-free~\cite{abafree}, but not when we take the vertices in DFS order. Instead we would need to take what we could call \emph{siblings first order} (this was defined only implicitly for $H_2$ in~\cite{abafree}). Quite surprisingly, however, if we use this ``siblings first order'' for $c>2$, we can have an `$ABABA$', i.e., this would only show that $H_c$ is $ABABAB$-free.} the general case will follow similarly. 		
		Recall that the vertices of $H_2$ are the vertices of $T(m,m)$. We claim that a DFS-order of these vertices is $ABABA$-free. 
		Indeed, let $A=\{a_1,a_2,\ldots,a_m\}$ and $B=\{b_1,b_2,\ldots,b_m\}$ be two hyperedges of $H_2$, such that the vertices of these hyperedges are listed in DFS order.
		Now we distinguish some cases.
		
		If both $A$ and $B$ are vertical, then their first few elements are equal, and after these the remaining elements of one precedes the other, i.e., their vertices are ordered as, say, $a_1=b_1, \ldots, a_i=b_i$, $a_{i+1}, \ldots, a_m$, $b_{i+1}, \ldots, b_m$ for some $i$.
		In this case they are even $ABA$-free.
		
		If both $A$ and $B$ are horizontal, then either $A=B$ or they are disjoint.
		In the latter case, their vertices are ordered as, say, $a_1, \ldots, a_i$, $b_1, \ldots, b_m$, $a_{i+1}, \ldots, a_m$ for some $i$.
		In this case they are $ABAB$-free.
		
		
		If $A$ is vertical and $B$ is horizontal, then they intersect in at most one element.
		Their vertices can be ordered as $a_1, \ldots, a_{i-1}$, $b_1, \ldots, b_j(=)a_i,$ $a_{i+1}, \ldots, a_m$, $b_{j+1} \ldots,b_m$ for some $i,j$, where $b_j$ and $a_i$ might be equal.
		In this case they are $BABA$-free, thus also $ABABA$-free.
		
		If $A$ is horizontal and $B$ is vertical, then the same argument gives that they are even $ABAB$-free.
		
		\smallskip
		
		The proof of the induction step is quite similar.
		Suppose that $c > 2$ and $H_{c-1}$ is $ABABA$-free.
		Recall that the vertices of $H_c$ are the vertices of the tree $T(n_{c-1},m)$.
		A vertical edge and another (vertical or horizontal) edge behave exactly the same way as for $H_2$, thus they will be $ABABA$-free using the same arguments.
		The only case left to check is if both $A$ and $B$ are horizontal, but then they are $ABABA$-free using induction, if during the DFS search we take the siblings in the order given by the induction.
	\end{proof}
	
	This concludes the proof of Theorem~\ref{thm:ababa-free}.		
\end{proof}

\begin{corollary}
	For every triple of integers $l\ge 2.5$, $c \ge 2$ and $m \ge 2$ there exists an $(AB)^l$-free $m$-uniform hypergraph which is not $c$-colorable. 	
\end{corollary}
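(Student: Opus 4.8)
The plan is to reduce the general case $l \ge 2.5$ to the already-proven case $l = 2.5$, i.e.\ to Theorem~\ref{thm:ababa-free}. The key observation is monotonicity in $l$: if a hypergraph (with a fixed vertex order) is $ABABA$-free, then it is automatically $(AB)^l$-free for every $l \ge 2.5$. Indeed, an $(AB)^l$-sequence with $2l \ge 5$ contains, among its first five alternating elements $a_1 < b_1 < a_2 < b_2 < a_3$, an $ABABA$-pattern for the same pair of hyperedges $A,B$; hence a hypergraph that has no $ABABA$-sequence in a given order has no $(AB)^l$-sequence in that order either. Since $(AB)^l$-freeness for an unordered hypergraph only requires the existence of one good order, $ABABA$-free implies $(AB)^l$-free.

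Given this, here is how I would carry out the argument. First I would fix $l \ge 2.5$, $c \ge 2$, $m \ge 2$. Apply Theorem~\ref{thm:ababa-free} to obtain an $ABABA$-free $m$-uniform hypergraph $H$ that is not $c$-colorable; by definition this means there is an order of $V(H)$ witnessing $ABABA$-freeness. Then I would invoke the monotonicity observation above to conclude that $H$, with this same order, is $(AB)^l$-free, hence $H \in \mathcal{(AB)}^l\textit{-free}$. Since $H$ is still $m$-uniform and still not $c$-colorable, it is the desired example, and the corollary follows.

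The only slightly delicate point — and the one I would state carefully — is the reduction from a long alternation to the length-$5$ alternation: one must check that truncating an $(AB)^l$-sequence to its first five terms genuinely yields an $ABABA$-sequence in the sense of the definition, i.e.\ that the membership conditions $\{a_1,a_2,a_3\} \subseteq A\setminus B$ and $\{b_1,b_2\}\subseteq B\setminus A$ are inherited. This is immediate from the definition, since the truncated elements satisfy the same inclusions and the same order relations. There is no real obstacle here; the whole corollary is essentially a one-line consequence of Theorem~\ref{thm:ababa-free} together with the nesting $\mathcal{(AB)}^{l'}\textit{-free} \subseteq \mathcal{(AB)}^{l}\textit{-free}$ for $l' \le l$ (restricted to $2l', 2l \in \Z$), which is worth recording explicitly as it also clarifies why $ABAB$-free is the ``hardest'' case among $l \ge 2$ and $ABA$-free among all $l \ge 1.5$.
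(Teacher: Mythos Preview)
Your proposal is correct and matches the paper's intent: the corollary is stated without proof immediately after Theorem~\ref{thm:ababa-free}, as it follows at once from the observation that any $(AB)^l$-sequence with $l\ge 2.5$ contains an $ABABA$-sequence, so the $ABABA$-free hypergraph $H_c$ constructed there already serves as the example for every such $l$.
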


\section{Properties of pseudo-disks}\label{sec:preliminaries}

In this section we present and in some cases also (re)prove some properties of (stabbed) pseudo-disks. 
We begin by formally defining pseudo-disks.

We always assume that the boundaries of two regions intersect in finite many points. We say that an intersection point  $p$ of two regions is a \emph{touching point}, if $p$ lies on their boundaries but there is a neighborhood of $p$ such that there is an arbitrarily small perturbation of the boundaries that makes them disjoint inside this neighborhood of $p$. If an intersection point of two boundaries is not a touching point, then we say that it is a point where they \emph{properly cross}.

\begin{defi}\label{def:psdisk}
	A family of compact regions in the plane, each of which is a region bounded by a Jordan curve\footnote{Note that by the Jordan--Schoenflies theorem such regions are always simply connected.}, is called a family of \emph{pseudo-disks} if any two regions are either disjoint, intersect exactly once in a touching point, or their boundaries intersect exactly twice, both times properly crossing. 
\end{defi}

We now recall some properties of pseudo-disks.

\begin{lemma}[\cite{buzaglo}]\label{lem:disjointedges}
	Let $D_1$ and $D_2$ be two pseudo-disks in the plane. Let $x$ and $y$ be two
	points in $D_1\setminus D_2$. Let $a$ and $b$ be two points in $D_2\setminus D_1$. Let $e$ be any Jordan
	arc connecting $x$ and $y$ that is fully contained in $D_1$. Let $f$ be any Jordan arc connecting $a$ and $b$
	that is fully contained in $D_2$. Then $e$ and $f$ cross an even number of times.
\end{lemma}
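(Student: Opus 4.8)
The plan is to reduce the lemma to the elementary fact that two closed curves in the plane meet in an even number of points, by closing $e$ and $f$ up into loops. It suffices to treat the case where $e$ and $f$ meet transversally in finitely many points, since a tangency is not a crossing and a small transversal perturbation changes the number of proper crossings by an even amount. We may also assume that $D_1$ and $D_2$ are neither disjoint, nor meet in a single touching point, nor are nested, as in each of these cases $e\cap f\subseteq D_1\cap D_2$ contributes no crossing, or else $D_1\setminus D_2$ or $D_2\setminus D_1$ is empty and the statement is vacuous. So $\partial D_1$ and $\partial D_2$ cross exactly twice, at two points $u$ and $v$, and $D_1\cap D_2$ is a ``lens'' bounded by an arc $\alpha\subseteq\partial D_1$ and an arc $\beta\subseteq\partial D_2$, both running from $u$ to $v$.

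The only place where the pseudo-disk hypothesis enters is the claim that $D_1\setminus D_2$ and $D_2\setminus D_1$ are path-connected. Indeed, the arc $\beta$ has its relative interior in the interior of $D_1$ and its endpoints $u,v$ on $\partial D_1$, i.e.\ it is a crosscut of $D_1$; since $D_1$ is homeomorphic to a closed disk by Jordan--Schoenflies, $\beta$ cuts $D_1$ into two closed topological disks, one of which is $D_1\cap D_2$ and the other $\overline{D_1\setminus D_2}$. Hence $D_1\setminus D_2$ is path-connected, and by symmetry so is $D_2\setminus D_1$. We therefore choose a path $e'$ inside $D_1\setminus D_2$ from $y$ to $x$ and a path $f'$ inside $D_2\setminus D_1$ from $b$ to $a$, and set $\widetilde e:=e\cup e'$ and $\widetilde f:=f\cup f'$, two closed curves.

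Now $\widetilde e\cap\widetilde f=e\cap f$: the set $e'\subseteq D_1\setminus D_2$ is disjoint from $D_2\supseteq f\cup f'$, and $e\subseteq D_1$ is disjoint from $D_2\setminus D_1\supseteq f'$. Moreover each point of $e\cap f$ lies in $D_2$, hence not on $e'$, and in $D_1$, hence not on $f'$; since $e$ and $f$ are simple arcs, such a point is passed exactly once by $\widetilde e$ and exactly once by $\widetilde f$, and it is a transversal crossing of the two loops. The standard winding-number argument now finishes it: traversing $\widetilde f$, the mod-$2$ winding number of $\widetilde e$ about the moving point is defined away from $\widetilde e$, flips at exactly each crossing with $\widetilde e$, and returns to its initial value after one full loop; so $|e\cap f|=|\widetilde e\cap\widetilde f|$ is even.

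I expect the only non-routine step to be the path-connectivity claim, which is precisely where the definition of pseudo-disks is used (through the crosscut structure of the lens $D_1\cap D_2$); a little care is needed to also cover boundary points $x,y\in\partial D_1$ and to avoid any general-position assumption on triples of regions, but Jordan--Schoenflies makes this harmless. Everything afterwards is purely topological. As an alternative to winding numbers, one can work inside the lens $L=D_1\cap D_2$ directly: $e\cap L$ and $f\cap L$ are unions of sub-arcs with endpoints on $\beta$ and on $\alpha$ respectively, each sub-arc of $e$ separates $L$ so that all of $\alpha$ lies on one of its sides, hence each sub-arc of $f$ has both its endpoints on one side of it and so crosses it an even number of times; summing over all pairs of sub-arcs gives the claim.
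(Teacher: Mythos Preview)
The paper does not actually prove this lemma; it merely quotes it from \cite{buzaglo} and uses it later. So there is no ``paper's own proof'' to compare against.

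Your argument is correct. The closing-up trick --- extend $e$ by a path $e'$ in $D_1\setminus D_2$ and $f$ by a path $f'$ in $D_2\setminus D_1$, then invoke the mod-$2$ intersection parity of two closed curves --- is the standard route, and your identification of the one nontrivial point (that $D_1\setminus D_2$ and $D_2\setminus D_1$ are path-connected, which is exactly where the pseudo-disk hypothesis is spent) is on the mark. The crosscut/Jordan--Schoenflies justification you give for that connectivity is fine, and the verification that $\widetilde e\cap\widetilde f=e\cap f$ with each intersection traversed once on each side is clean. The alternative ``work inside the lens'' argument you sketch at the end is also valid and is in fact closer in spirit to how such lemmas are often phrased in the pseudo-disk literature: restrict both arcs to $L=D_1\cap D_2$, observe that sub-arcs of $e$ have both endpoints on $\beta$ and sub-arcs of $f$ have both endpoints on $\alpha$, and use that a chord of a disk with both endpoints on one boundary arc leaves the opposite boundary arc entirely on one side.

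One very small remark: when you say ``a small transversal perturbation changes the number of proper crossings by an even amount'', this is true but slightly beside the point, since you only need to count crossings (not intersection points) and tangencies already contribute zero to that count; you could simply say you may assume finitely many intersection points after a perturbation that does not change the crossing parity.
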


\begin{lemma}[\cite{pinchasi}]\label{lem:shrink2}
	Let $\F$ be a finite pseudo-disk family $\F$, let $S$ be a finite set of points and let $F\in \F$ be a pseudo-disk that contains exactly $k$ points of $S$, one of which is $q\in S$. Then for every integer $2\le l\le k$ there exists a set $F'\subset F$ such that $q\in F'$, $|F'\cap S|=l$ and $\F\cup\{F'\}$ is a family of pseudo-disks.
\end{lemma}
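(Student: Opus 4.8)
The plan is to follow the approach of Pinchasi~\cite{pinchasi}: obtain $F'$ by continuously shrinking $F$ toward $q$ in a way that keeps the pseudo-disk property valid at every moment, and then read off the prescribed value $l$ along the way. First I would pass to a general-position setting: by a routine perturbation of the kind the footnote above refers to, one may assume that no point of $S$ lies on any of the boundaries $\partial D$ ($D\in\F$) and that no three of these boundaries are concurrent; this changes neither $F\cap S$ nor any incidence that matters, so it suffices to prove the lemma in this case. If $l=|F\cap S|$ we may simply take $F'=F$, so from now on assume $l<|F\cap S|$.

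The core step is the construction of a pseudo-disk-preserving inward sweep of $F$. Since $F$ is a topological disk, I would invoke the topological sweeping machinery for arrangements of pseudo-circles of Snoeyink--Hershberger~\cite{snoeyink}, in the form used by Buzaglo--Pinchasi~\cite{buzaglo,pinchasi}, applied to the boundaries $\{\partial D : D\in\F\}\cup\{\partial F\}$. This produces a continuous family of regions $\{F_t\}_{t\in[0,1]}$ with $F_0=F$, $F_{t'}\subsetneq F_t$ whenever $t<t'$, $q\in F_t$ throughout, $\bigcap_t F_t$ a tiny region around $q$ meeting $S$ only in $q$, and such that $\F\cup\{F_t\}$ is again a family of pseudo-disks for every $t$ (here, as is standard and as is already implicit in~\cite{buzaglo,pinchasi}, we allow two members of the family to be nested, which is how the statement of the lemma should be understood). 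To verify the last property at each elementary event of the sweep I would use Lemma~\ref{lem:disjointedges}: it forces the number of crossings between the moving boundary $\partial F_t$ and any fixed $\partial D$ to stay even, hence to stay at most two.

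With the sweep in hand the lemma follows quickly. Perturbing the sweep slightly if necessary, I would arrange that $\partial F_t$ never passes through two points of $S$ simultaneously, so that the step function $g(t):=|F_t\cap S|$ starts at $|F\cap S|$, ends at $1$, and drops by exactly $1$ at each of its jumps; hence $g$ attains every integer value between $1$ and $|F\cap S|$. Choosing $t$ with $g(t)=l$ and setting $F':=F_t$ then gives $q\in F'$, $|F'\cap S|=l$, and $\F\cup\{F'\}$ a family of pseudo-disks, with $F'\subsetneq F$ since $t>0$ when $l<|F\cap S|$. (Equivalently, one can phrase the argument as removing a single point of $S$ from $F$ while preserving the pseudo-disk property, and iterating $|F\cap S|-l$ times.)

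The step I expect to be the main obstacle is the existence of the pseudo-disk-preserving inward sweep itself — that $\partial F_t$ can be pushed continuously toward $q$ while remaining a pseudo-circle with respect to every $\partial D$ — which is exactly the nontrivial content borrowed from~\cite{snoeyink,buzaglo,pinchasi}. The remaining ingredients (the initial general-position reduction, including the degenerate configurations flagged in the footnote, and the genericity of the sweep ensuring that points of $S$ leave one at a time) are routine but tedious, and I would only sketch them. Readers who wish to avoid this machinery entirely can instead use the self-contained combinatorial route via $ABAB$-free hypergraphs (Theorem~\ref{thm:abab} together with Theorem~\ref{thm:equiv}), which reproves the upper bound of Theorem~\ref{thm:main} without any sweeping argument.
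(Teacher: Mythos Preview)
The paper does not give its own proof of this lemma at all; it is simply quoted from~\cite{pinchasi} and used as a black box in Section~\ref{sec:direct}. Your sketch follows exactly the route of that cited source---a continuous inward sweep of $F$ toward $q$ via the Snoeyink--Hershberger machinery~\cite{snoeyink}, maintaining the pseudo-disk property throughout and reading off the desired $l$ along the way---so your proposal coincides with what the paper relies on (and the paper's footnote about general-position caveats matches the caution you raise about the sweep). There is nothing further to compare.
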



We will need to use the fact that the intersection and union of stabbed pseudo-disks are \emph{simply connected} (see also \cite{lenses} for such a statement).

\begin{thm}\label{thm:psdisksimplyconn}
	The union and the intersection of every finite family of stabbed pseudo-disks are both simply connected.
\end{thm}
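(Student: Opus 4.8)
The plan is to fix a point $o$ that stabs every member of the family $\F=\{D_1,\dots,D_n\}$ and argue that both $\bigcup_i D_i$ and $\bigcap_i D_i$ are simply connected by showing that their complements have no bounded component, i.e.\ that there is no ``hole''. Since each $D_i$ is compact and bounded by a Jordan curve, it is itself simply connected, and a compact planar set is simply connected exactly when its complement in the plane is connected (equivalently, when the set does not separate the plane). So the whole statement reduces to: the complement of $\bigcup_i D_i$ is connected, and the complement of $\bigcap_i D_i$ is connected.

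\emph{The intersection.} Let $K=\bigcap_i D_i$; note $o\in K$. I would prove $K$ is simply connected by a star-shaped-type argument relative to $o$ \emph{after} replacing ``straight segments'' by suitable Jordan arcs. Concretely, I would show that for every point $x\in K$ there is a Jordan arc from $o$ to $x$ lying inside $K$, and moreover that any closed curve $\gamma$ in $K$ bounds a disk in $K$. The cleanest route is induction on $n$: assuming $D_1\cap\dots\cap D_{n-1}$ is simply connected (hence homeomorphic to a closed disk, up to the boundary subtleties), intersect it with $D_n$. Two simply connected compact sets whose boundaries cross at most twice (which is the pseudo-disk condition, and is \emph{preserved} under taking such intersections — this is exactly where Lemma~\ref{lem:disjointedges} is used: boundary arcs of $D_1\cap\dots\cap D_{n-1}$ and of $D_n$ cross an even number of times, and with the two-crossing hypothesis this forces the new boundary to again be a single Jordan curve) intersect in a set whose boundary is again a single Jordan curve, hence simply connected. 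The base case $n=1$ is the hypothesis on a single pseudo-disk.

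\emph{The union.} Let $U=\bigcup_i D_i$; again $o\in U$. Here I would show the complement $\R^2\setminus U$ is connected. Suppose not; then $\R^2\setminus U$ has a bounded component $W$, whose boundary $\partial W$ is covered by arcs of the curves $\partial D_i$. Take any point $w\in W$ and consider a Jordan arc $\beta$ from $w$ to the stabbing point $o$; since $o\in U$ and $w\notin U$, $\beta$ must first exit $W$ by crossing $\partial W\subseteq\bigcup_i\partial D_i$, entering some $D_j$. The key local fact I would establish is that ``around the hole $W$'' the boundary pieces must form a closed alternating structure of at least two pseudo-disks $D_i,D_j$ whose boundaries then cross at least four times — contradicting Definition~\ref{def:psdisk} — unless one can instead exhibit a point of $W$ that is separated from $o$ inside some single $D_i$, contradicting simple-connectedness of $D_i$. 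This ``a hole in the union of pseudo-disks through a common point would force a forbidden fourth crossing'' is the crux; it is essentially a parity/rotation argument using Lemma~\ref{lem:disjointedges} applied to an arc inside $D_j$ running around $W$ versus an arc through $o$, so I expect this to be the main obstacle and the part that needs the stabbing hypothesis essentially (without a common point the union of pseudo-disks is famously \emph{not} simply connected — think of three pairwise-crossing disks arranged around an empty triangle).

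I would organize the write-up as a single induction on $|\F|$ proving simultaneously that $\bigcap\F$ and $\bigcup\F$ are simply connected \emph{and} that the boundary of each is a single Jordan curve (so that the inductive step ``intersect/union with one more stabbed pseudo-disk $D_n$'' only ever deals with two simply connected sets bounded by Jordan curves with at most two boundary crossings, where the elementary two-curve analysis applies). The routine-but-careful part is handling touching points and the case analysis of how $\partial D_n$ meets the current boundary curve (zero, one, or two crossings); I would dispatch these by the standard observation that crossing parity is even (Lemma~\ref{lem:disjointedges}) so the only possibilities are $0$ or $2$ proper crossings, each of which keeps the result simply connected.
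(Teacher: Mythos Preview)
Your inductive plan hinges on the claim that ``the pseudo-disk condition is \emph{preserved} under taking such intersections,'' i.e., that if $K_{n-1}=D_1\cap\cdots\cap D_{n-1}$ is simply connected with Jordan boundary, then $\partial D_n$ meets $\partial K_{n-1}$ in at most two points. This is false, and Lemma~\ref{lem:disjointedges} does not save it (that lemma is about arcs lying \emph{inside} two fixed pseudo-disks, not about how a third boundary meets a composite boundary). Here is a concrete counterexample with $n=3$. Let $D_1,D_2$ be two unit disks with centers $(\pm\tfrac12,0)$, so $L=D_1\cap D_2$ is a lens containing $o=(0,0)$; write $\alpha_1\subset\partial D_1$ and $\alpha_2\subset\partial D_2$ for the two boundary arcs of $L$. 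Now take $D_3$ to be the region bounded by a Jordan curve that goes: from a point high on $\alpha_1$ into $D_2\setminus D_1$, loops back to a point low on $\alpha_1$, crosses $L$ below $o$ to a low point on $\alpha_2$, goes into $D_1\setminus D_2$, loops back to a high point on $\alpha_2$, and crosses $L$ above $o$ back to the start. This $D_3$ contains $o$, its boundary meets each of $\partial D_1$ and $\partial D_2$ in exactly two points (so $\{D_1,D_2,D_3\}$ is a genuine stabbed pseudo-disk family), yet $\partial D_3$ crosses $\partial L$ in \emph{four} points. So the ``two simply connected sets with at most two boundary crossings'' picture does not persist along your induction, and the elementary two-curve analysis you want to apply at each step is unavailable. (The intersection $L\cap D_3$ is still simply connected here, but your \emph{argument} for why does not survive.)

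Your plan for the union has an analogous weakness: the ``a hole forces a fourth crossing between some pair'' heuristic works when only two boundary curves appear along $\partial W$, but once three or more pseudo-disk boundaries contribute arcs to $\partial W$ there is no reason any \emph{pair} must cross four times; the obstruction is genuinely global, not pairwise. The paper does not argue by induction on $|\F|$ at all. It proves a general statement (Lemma~\ref{lem:molnar}) that the union is simply connected whenever each region is path-connected, pairwise unions have path-connected complements, and triples intersect --- and the proof is a Hanani--Tutte/$K_{3,3}$ argument: assuming a hole, one builds a drawing of $K_{3,3}$ in which independent edges are disjoint, a contradiction. The intersection case (Lemma~\ref{lem:molnarintersection}) is then obtained by a sphere-projection duality that swaps ``union'' with ``intersection'' and ``region'' with ``complement,'' together with the elementary Lemma~\ref{lem:sc}. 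If you want to repair your approach, you would need to replace the false two-crossing invariant by a different inductive invariant; the paper's route sidesteps this entirely.
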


As in the literature there seems to be a bit of confusion (and even false claims) as  for why Theorem~\ref{thm:psdisksimplyconn} holds, we provide a new and simple proof for it (in a more general form) in Appendix \ref{appendix}.
The following lemma will be used to prove the equivalence between hypergraphs defined by stabbed pseudo-disks and $ABAB$-free hypergraphs.

\begin{lemma}\label{lem:ray}
	Let $\F$ be a finite family of pseudo-disks each of which contains a common point $p$ in their interior. Then for any point $q$ there is a Jordan arc from $p$ to infinity that contains $q$ and intersects the boundary of every pseudo-disk in $\F$ exactly once. We can further require that this arc intersects the boundaries of the pseudo-disks in $\F$ in distinct points (possibly except for $q$).
\end{lemma}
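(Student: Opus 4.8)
The plan is to construct the arc in two halves joined at $q$, after two preliminary reductions. First, by a small perturbation of the boundaries near their touching points we may assume that every two boundaries meet only in proper crossings; the distinctness of the resulting crossing points (except possibly at $q$) will then follow from one further generic perturbation of the arc at the very end, so we ignore that clause until then. Second, since $p$ lies in the interior of every pseudo-disk and every pseudo-disk is compact, any Jordan arc from $p$ to infinity runs from the inside of $D$ to the outside of $D$ for each $D\in\F$, hence it crosses $\partial D$ exactly once as soon as $\gamma\cap D$ is \emph{connected}. So it is enough to produce a Jordan arc $\gamma$ from $p$ to infinity through $q$ with $\gamma\cap D$ connected for every $D\in\F$.

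Write $\F_q=\{D\in\F:q\in D\}$, $\F_q'=\F\setminus\F_q$, and put $K=\bigcap_{D\in\F_q}D$ and $V=\bigcup_{D\in\F_q'}D$. I would seek $\gamma=\gamma_1\cup\gamma_2$, with $\gamma_1$ from $p$ to $q$ contained in $K$ and $\gamma_2$ from $q$ to infinity contained in the complement of $V$. By Theorem~\ref{thm:psdisksimplyconn}, $K$ is simply connected; it is non-empty since $p\in\mathrm{int}\,K$, and it contains $q$. Likewise $V$ is a simply connected compact set, so its complement is connected and contains $q$ as well as all sufficiently distant points. Thus arcs $\gamma_1,\gamma_2$ of this shape exist, and any such $\gamma_1$ automatically lies inside every $D\in\F_q$ while any such $\gamma_2$ automatically lies outside every $D\in\F_q'$. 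A short computation shows that the remaining requirement ``$\gamma\cap D$ connected for all $D$'' reduces to: $\gamma_1\cap D$ is connected for every $D\in\F_q'$, and $\gamma_2\cap D$ is connected for every $D\in\F_q$; and that, granting these, $\gamma_1$ and $\gamma_2$ can be combined into one Jordan arc by a standard perturbation.

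I would establish the first requirement (the second being symmetric, with ``inside'' and ``outside'' interchanged and $K$ replaced by the complement of $V$) by induction on $|\F_q'|$. If $\F_q'=\emptyset$, any arc from $p$ to $q$ in the connected set $K$ works. For the inductive step, fix $D^\ast\in\F_q'$; by induction there is an arc $\delta$ from $p$ to $q$ in $K$ with $\delta\cap D$ connected for every $D\in\F_q'\setminus\{D^\ast\}$, and it remains to reroute $\delta$ so that $\delta\cap D^\ast$ becomes connected, too, without spoiling the other intersections. The set $\delta\cap D^\ast$ is a disjoint union of subarcs, the first of which contains $p$; if there is a second one, I would take the excursion of $\delta$ outside $D^\ast$ that follows the first and replace it by a short arc running just inside $D^\ast$ along $\partial D^\ast$ --- equivalently, once $\delta$ has left $D^\ast$ for the first time I would steer it straight to $q$ through the component of $K\setminus D^\ast$ containing $q$. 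Here the pseudo-disk property from Definition~\ref{def:psdisk} (two boundaries cross at most twice, so $\partial D'$ meets $D^\ast$ in a single arc for every other $D'$), the simple connectivity of the relevant intersections (Theorem~\ref{thm:psdisksimplyconn}), and a parity-of-crossings argument in the spirit of Lemma~\ref{lem:disjointedges} together let one check that no new crossing with any $\partial D'$, $D'\in\F_q'\setminus\{D^\ast\}$, is created. Iterating removes all superfluous subarcs.

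The step I expect to be the main obstacle is precisely this rerouting. One must verify that after leaving $D^\ast$ in the right direction one indeed lands in the component of $K\setminus D^\ast$ containing $q$, and --- the genuinely delicate point --- that the connecting arc can be routed so as not to acquire an extra crossing with one of the already-handled pseudo-disks; a careless shortcut along $\partial D^\ast$ could in principle cross some other $\partial D'$ twice where the original excursion avoided it altogether, and ruling this out is where the pseudo-disk property and simple connectivity must be used in combination. Once both halves are in hand, gluing them at $q$ together with a routine local analysis at $q$ --- which also covers the case where $q$ lies on several boundaries and, by one last generic perturbation, makes all the remaining intersection points distinct --- produces the desired arc.
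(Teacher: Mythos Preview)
Your outline is quite different from the paper's, and the place you flag as ``the main obstacle'' is indeed a genuine gap. The paper avoids the rerouting difficulty entirely: it works in the \emph{arrangement} of the pseudo-disks and uses the \emph{depth} of a face (the number of pseudo-disks containing it). Using Theorem~\ref{thm:psdisksimplyconn} one shows that every face of depth $d>0$ has a neighbour of depth $d-1$ and every face of depth $d<|\F|$ a neighbour of depth $d+1$; consequently the distance in the dual graph from any face $f$ to $f_\infty$ equals its depth, and its distance to $f_p$ equals $|\F|$ minus its depth. One then follows a shortest path in the dual graph from the face $f_q$ of $q$ to $f_p$, and another to $f_\infty$. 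Along such a path the depth is strictly monotone, so each pseudo-disk boundary is crossed exactly once, and the distinct-intersection clause comes for free since one crosses through edge interiors. The case $q\in\partial D$ for some $D$ is handled by picking nearby points $q',q''$ on either side.

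In your plan the inductive rerouting step is not established, and I do not see how to complete it as written. Two concrete problems:
\begin{itemize}
\item When $\delta$ first exits $D^\ast$, there is no reason the exit point lies in the component of $K\setminus D^\ast$ containing $q$. You note this yourself, but ``steer it in the right direction'' is not an argument: the original $\delta$ may be forced (by the already-handled disks) to leave $D^\ast$ into the wrong component.
\item More seriously, any shortcut along (or near) $\partial D^\ast$ can re-enter a previously handled $D'\in\F_q'\setminus\{D^\ast\}$. Appealing to Lemma~\ref{lem:disjointedges} does not help: that lemma concerns arcs each lying in a single pseudo-disk, whereas here your shortcut lies near $\partial D^\ast$ (not inside $D^\ast$) and the relevant portion of the old $\delta$ is outside $D'$, so the parity argument does not apply. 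Nothing in the pseudo-disk axioms prevents $\partial D^\ast$ from weaving in and out of $D'$ along the stretch you want to follow; with more disks in $\F_q'$ these constraints interact, and an induction that fixes one disk at a time has no mechanism to prevent spoiling earlier ones.
\end{itemize}
The depth argument in the paper sidesteps both issues at once, because strict monotonicity of depth is a \emph{global} invariant that simultaneously controls all crossings; this is exactly the structure your local rerouting lacks.
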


We first need to define the \emph{arrangement} determined by the boundaries of a finite family of Jordan regions whose boundaries intersect finitely many times (in particular, of a finite pseudo-disk family). The \emph{vertices} of the arrangement are the intersection points of the boundaries of the pseudo-disks, the \emph{edges} are the maximal connected parts of the boundaries that do not contain a vertex and the \emph{faces} are the maximal connected parts of the plane which are disjoint from the edges and the vertices of the arrangement. Thus the faces are maximal regions which are either completely contained in or disjoint from every region. The Jordan--Schoenflies theorem implies that all faces except the outer face are homeomorphic to a disk $D$ while the outer face is homeomorphic to $\R^2\setminus D$.

\begin{proof}[Proof of Lemma \ref{lem:ray}]
	Consider the arrangement defined by $\F$. By Theorem \ref{thm:psdisksimplyconn} the intersection of all pseudo-disks is simply connected and thus it must be exactly the face $f_p$ of the arrangement that contains $p$. Also by Theorem \ref{thm:psdisksimplyconn} the union of all the pseudo-disks is simply connected and thus in the arrangement there is exactly one face $f_\infty$ disjoint from all pseudo-disks.
	
	We claim that every face $f$ different from $f_p$ is adjacent to an edge which is on the boundary of a pseudo-disk that does not contain $f$. Indeed, if all the pseudo-disks that share some boundary with $f$ would contain $f$, then their intersection would contain $f$.
	However, their intersection also contains $f_p \ne f$, which is a contradiction, since the intersection of these pseudo-disks is simply connected by Theorem~\ref{thm:psdisksimplyconn}.  
	
	
	Using similar arguments (see~\cite[Corollary~2.10]{lenses}), we can also conclude that every face $f$ except $f_\infty$ contains a boundary part which is shared by the boundary part of a pseudo-disk which contains $f$.\footnote{One can also apply the projection trick from the proof of Lemma \ref{lem:molnarintersection} that can be found later.}.	
	
	For a face $f$ define its \emph{depth} as the number of pseudo-disks that contain $f$. Thus it follows from the arguments above that: $f_p$ is the only face with maximal depth $|\F|$; $f_\infty$ is the only face with depth $0$; every face with depth $d>0$ has a neighboring face whose depth is $d-1$; and every face with depth $d<|\F|$ has a neighboring face whose depth is $d+1$. 
	
	The \emph{dual graph} of the arrangement of pseudo-disks has the faces of the arrangement as its vertex set and contains an edge for every two neighboring faces (that is, faces whose boundaries share a common edge of the arrangement).
	It follows that the distance of any face $f$ from $f_\infty$ in the dual graph is exactly its depth and, similarly, its distance from $f_p$ is the depth of $f_p$ minus the depth of $f$.
	
	Assume first that $q$ is inside some face $f_q$.
	It is possible to draw a curve that connects $q$ to $p$ through the faces that correspond to the shortest path between $f_q$ and $f_p$ in the dual graph such that for every pair of consecutive faces along this path their common boundary is intersected exactly once at a point that is an interior point of the corresponding edge (of the arrangement).
	Similarly, it is possible to draw a curve that connects $q$ and a point in $f_\infty$ through faces that correspond to the shortest path between $f_q$ and $f_\infty$ in the dual graph.
	It is also possible to ensure that these curves intersect only at $q$.
	Let $\gamma$ denote the union of these two curves.
	
	As the faces along $\gamma$ have strictly decreasing/increasing depth, all these intersected boundaries belong to pairwise different pseudo-disks. Clearly $\gamma$ must also intersect all pseudo-disks (exactly once) as $p$ is inside all of them while $\infty$ is outside all of them.
	
	The additional property that the curve intersects the boundaries in different points follows from the construction as we always cross the inside of boundary parts between two neighboring faces.
	
	
	Suppose now that $q$ is on the boundary of one or more pseudo-disks. Note that $q$ cannot be a touching point of two pseudo-disks since otherwise these pseudo-disks will not have a common interior point.  Let $q'$ be a point inside all of these pseudo-disks arbitrarily close to $q$ and let $q''$ be a point outside all these pseudo-disks arbitrarily close to $q$. By Theorem \ref{thm:psdisksimplyconn} these pseudo-disks intersect in a simply connected region which contains $q$ and thus one face of the arrangement incident to $q$ is in all of these pseudo-disks, implying the existence of $q'$. The face opposite to this face in the circular order around $q$ is necessarily outside all of these pseudo-disks, using the fact that no two pseudo-disks touch each other at $q$. This implies the existence of $q''$.
	
	Connect $p$ to $q'$ and $q''$ to $\infty$ the same way as we did above and finally connect $q'$ to $q''$ with a curve through $q$ whose only intersection with boundaries is $q$. These curves together form the required curve.
\end{proof}



\section{A direct proof of the upper bound in Theorem~\ref{thm:main}}
\label{sec:direct}

Recall that Theorem~\ref{thm:main} follows from Theorems~\ref{thm:abab}, \ref{thm:abab-no} and~\ref{thm:equiv}.
Using the properties of pseudo-disks from the previous section, we can give an alternative and direct proof for the upper bound in Theorem~\ref{thm:main}.


\begin{proof}[Proof of the upper bound in Theorem~\ref{thm:main}]
	Let $\F$ be a family of pseudo-disks whose intersection is non-empty and let $S$ be a finite set of points.
	We wish to show that it is possible to color the points in $S$ with three colors such that any pseudo-disk in $\F$ that contains at least two points from $S$ contains two points of different colors.
	
	Consider a finite subfamily $\F' \subset \F$ that defines the same hypergraph on $S$ as $\F$. By applying Lemma~\ref{lem:shrink2} for every pseudo-disk (with $l=2$ and an arbitrary point inside the pseudo-disk) we extend $\F'$ such that every pseudo-disk with at least two points from $S$ contains a pseudo-disk with exactly two points from $S$. The pairs of points for which there is a pseudo-disk containing exactly these two points form the edges of the so-called \emph{Delaunay-graph} (of $S$ with respect to $\F$). It follows that by properly $2$-coloring this Delaunay-graph one obtains a proper $2$-coloring of the hypergraph $H(S,\F)$.
	
	We draw each edge of the Delaunay-graph $G$ such that it lies in one pseudo-disk that contains its two endpoints. This defines a drawing of $G$ in which edges may intersect, however, by Lemma~\ref{lem:disjointedges} independent edges intersect an even number of times.
	
	Consider the subdivision of the plane into faces that the drawing of $G$ defines.
	We claim that every point of $S$ is incident to the unbounded face. Indeed, otherwise there is a cycle in $G$ whose drawing separates a point $q\in S$ from infinity. However, then the union of the pseudo-disks corresponding to these edges is not simply connected (as it separates $q$ from infinity), contradicting Theorem \ref{thm:psdisksimplyconn}. 
	
	This implies that $G$ is an outerplanar graph (note that the embedding of $G$ that we consider is not necessarily a plane embedding).
	Indeed, connect all the points in $S$ to a new point $p'$ in the unbounded face such that the new edges do not cross each other and the original edges. Denote the resulting graph by $G'$ and note that we get a drawing of $G'$ such that independent edges cross an even number of times. Therefore by the Hanani-Tutte Theorem $G'$ is a planar graph. Consider a plane embedding of $G'$ and delete $p'$ from this embedding. We obtain a plane embedding of $G$ such that one face is incident to all the vertices. Therefore $G$ is outerplanar. Since outerplanar graphs are $3$-colorable, this completes the proof.
\end{proof}

%
%
%

\section{Proof of Theorem~\ref{thm:equiv}}
\label{sec:parabolasgeom}

In this section we prove Theorem \ref{thm:equiv}, that is, the equivalence between hypergraph families defined by stabbed pseudo-disks and $ABAB$-free hypergraphs. We prove this equivalence\footnote{Two hypergraph families are equivalent if for every hypergraph in one family there is an isomorphic hypergraph in the other family.} in several steps. First, we prove that stabbed pseudo-disks and internally stabbed pseudo-disks define the same hypergraphs. Then we prove that internally stabbed pseudo-disks and \emph{pseudo-parabolas} define the same hypergraphs.
Recall that a family of $x$-monotone bi-infinite Jordan curves\footnote{Let us call an $x$-monotone bi-infinite curve an `$x$-monotone bi-infinite Jordan curve' if for every $x_1<x_2$ the points of the curve which have $x$-coordinate between $x_1$ and $x_2$ form a Jordan arc.} is a family of pseudo-parabolas if every two curves in the family intersect at most twice.
A set of points $S$ and a (finite) family of pseudo-parabolas $\C$ naturally define a hypergraph $H(S,\C)$ whose vertex set is $S$ and whose hyperedge set consists of every subset $S' \subset S$ that is exactly the points from $S$ that lie on or above some pseudo-parabola in $\C$.
The equivalence between hypergraphs defined by internally stabbed pseudo-disks and hypergraphs defined pseudo-parabolas was already mentioned in~\cite{lenses} relying on a result of Snoeyink and Hershberger~\cite{snoeyink} by which a family of stabbed pseudo-disks can be swept by a ray. Here we reprove this equivalence using more elementary tools. Finally, we prove that hypergraphs defined by pseudo-parabolas are exactly the $ABAB$-free hypergraphs.

\subsection{Stabbed pseudo-disks and internally stabbed pseudo-disks}

\begin{proposition}\label{prop:perturb}
	Given any finite family $\F$ of stabbed pseudo-disks and a finite point set $S$, $\F$ can be perturbed to obtain an internally stabbed family $\F'$ such that $H(S,\F)= H(S,\F')$.
\end{proposition}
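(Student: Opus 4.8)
The plan is to show that a finite family $\F$ of stabbed pseudo-disks can be slightly enlarged or otherwise perturbed so that the common stabbing point moves into the interior of every region, while keeping the trace on $S$ unchanged. Let $p$ be a point contained in every $F \in \F$. For those $F$ whose interior already contains $p$ we do nothing. The remaining pseudo-disks have $p$ on their boundary; for each such $F$ I would push its boundary outward in a small neighborhood of $p$ so that $p$ becomes an interior point. Concretely, since $S$ is finite there is a radius $\varepsilon>0$ so small that the disk $B(p,\varepsilon)$ contains no point of $S$, so any modification confined to $B(p,\varepsilon)$ leaves $H(S,\cdot)$ untouched. Inside $B(p,\varepsilon)$ I replace the arc of $\partial F$ near $p$ by an arc that bulges outward past $p$; doing this with nested circular arcs centered slightly outside, all of radius larger than $\varepsilon$ near $p$, one can make the new boundary arcs of different pseudo-disks pairwise non-crossing inside $B(p,\varepsilon)$.

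The key point to verify is that the perturbed family $\F'$ is still a family of pseudo-disks, i.e.\ that no new boundary crossings are created and no existing touching points become bad. Two pseudo-disks $F_1,F_2 \in \F$ already share $p$ (either in the interior of both, on the boundary of one and interior of the other, or on the boundary of both). If $p$ is on the boundary of both, then by the pseudo-disk axiom $\partial F_1$ and $\partial F_2$ meet at most twice total: either $p$ is a touching point and is their only common point, or $p$ is one of their two proper crossings. Outside $B(p,\varepsilon)$ nothing changes, so I only need the modification inside $B(p,\varepsilon)$ to introduce at most the ``right'' number of crossings. Choosing the replacement arcs to be concentric circular arcs (with centers on the ray through $p$ opposite to the direction of bulging, and with appropriately chosen radii), any two of them are disjoint or cross at most twice, and one can further arrange by a generic choice of radii that inside $B(p,\varepsilon)$ any two new arcs are actually disjoint; then the number of $\partial F_1 \cap \partial F_2$ crossings is unchanged from the original, and $p$ is no longer on the boundary of either, so the pair is still a valid pseudo-disk pair. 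The case where $p$ was a common touching point needs slightly more care, but since a touching point can be removed by an arbitrarily small perturbation by definition, I can first perturb to separate that touching locally and then bulge, still confined to $B(p,\varepsilon)$.

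After the perturbation, every pseudo-disk of $\F'$ contains $p$ in its interior, so $\F'$ is internally stabbed, and since all changes were confined to the $S$-free ball $B(p,\varepsilon)$ we have $F \cap S = F' \cap S$ for every $F$, hence $H(S,\F) = H(S,\F')$. I would state this last equality as an immediate consequence of the construction.

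The main obstacle I anticipate is the bookkeeping in the second paragraph: making precise, with a clean explicit family of replacement arcs, that the local modification near $p$ creates no spurious boundary intersections between distinct pseudo-disks and does not destroy the ``at most twice, properly'' condition — in particular handling the subcase where $p$ was a crossing point or a touching point of two boundaries, where one must ensure the single local crossing or the touching is correctly inherited (or eliminated) by the bulged arcs. Everything else is routine once the right local picture (nested arcs bulging through $p$ inside a small $S$-free ball) is fixed.
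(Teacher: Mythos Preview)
Your approach is essentially the same as the paper's: confine all modifications to a small $S$-free disk around the stabbing point $p$ and replace each offending boundary arc by a simple detour that pushes $p$ into the interior. The paper's explicit choice is slightly cleaner than your ``concentric arcs centered opposite the bulge'': it uses two radial segments joined by a circular arc \emph{centered at $p$}, with distinct radii for distinct pseudo-disks, which makes the pseudo-disk verification (the bookkeeping you flag) more transparent since radial segments on distinct rays never meet and concentric circles never meet. One small oversight: you should allow $p\in S$ (the paper does), since then no $S$-free ball around $p$ exists; this is harmless because $p$ stays in every region before and after.
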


\begin{proof}
	Assume that the intersection of the pseudo-disks in $\F$ is a single point $p$ (as otherwise they are necessarily internally stabbed). Let $\F_p \subseteq \F$ be the pseudo-disks that contain $p$ on their boundaries. Take a disk $D$ centered at $p$ that is so small such that it does not contain points from $S$ (except $p$ if $p \in S$) and does not intersect the boundary of any pseudo-disk from $\F \setminus \F_p$.
	
	For each pseudo-disk $D_i \in \F_p$ let $x_i$ and $y_i$ be the first intersection points of $\partial D_i$ and $\partial D$ as we follow $\partial D_i$ from $p$ in either directions.\footnote{Note that it is possible that $\partial D$ intersects $\partial D_i$ an infinite number of times. However, since both of them are compact so is their intersection. In this case $x_i$ is the limit point of the intersection and is contained in it.} 
	Denote by $\gamma_i$ the part of $\partial D_i$ between $x_i$ and $y_i$ that does not contain $p$, and let $\varepsilon_i$ be a positive number such that every point on $\gamma_i$ is at distance greater than $\varepsilon_i$ from $p$ (due to compactness such a number exists). Let $\varepsilon$ be the minimum taken over all the $\varepsilon_i$'s and let $E$ be a disk of radius $\varepsilon$ centered at $p$.
	
	Now for every pseudo-disk $D_i \in \F_p$ let $x'_i$ (resp., $y'_i$) be the first intersection point with $\partial E$ on the segment of $\partial D_i$ from $x_i$ (resp., $y_i$) to $p$.\footnote{As before, such a point exists due to compactness.}
	Replace the segment of $\partial D_i$ between $x'_i$ and $y'_i$ with a radial segment, a part of a circle centered at $p$ (and inside $E$) and another radial segment, such that the resulting region contains $p$. See Figure~\ref{fig:2internally} for an illustration.
	\begin{figure}
		\centering
		\subfloat[]{\includegraphics[width= 6cm]{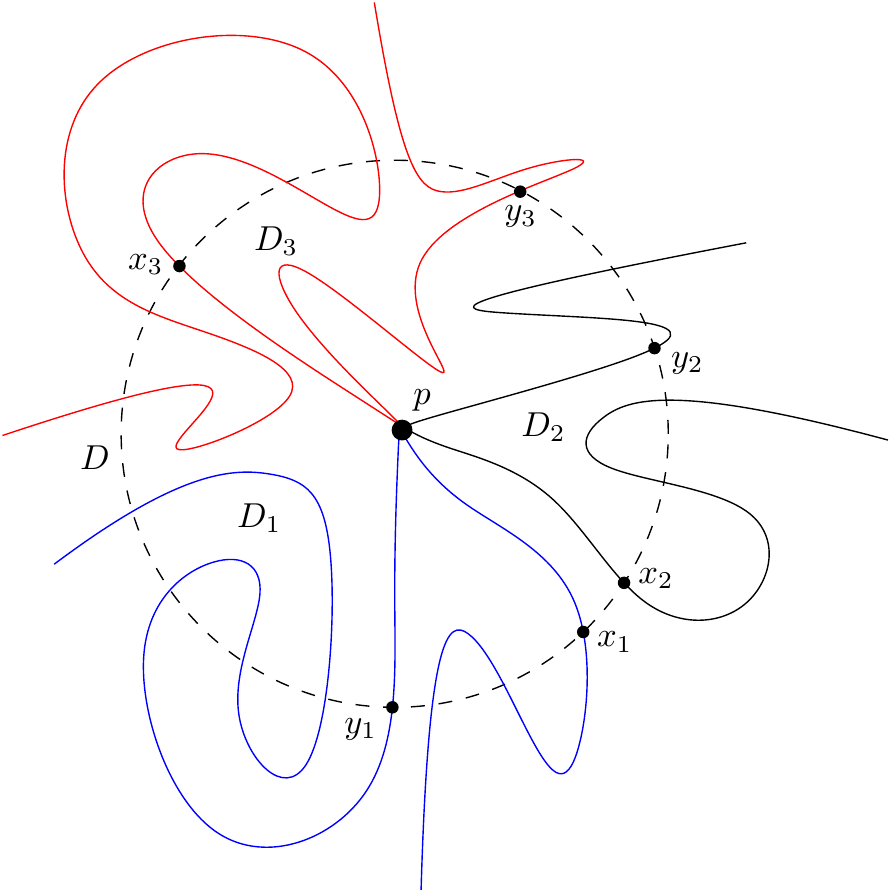}}
		\hspace{5mm}
		\subfloat[]{\includegraphics[width= 6cm]{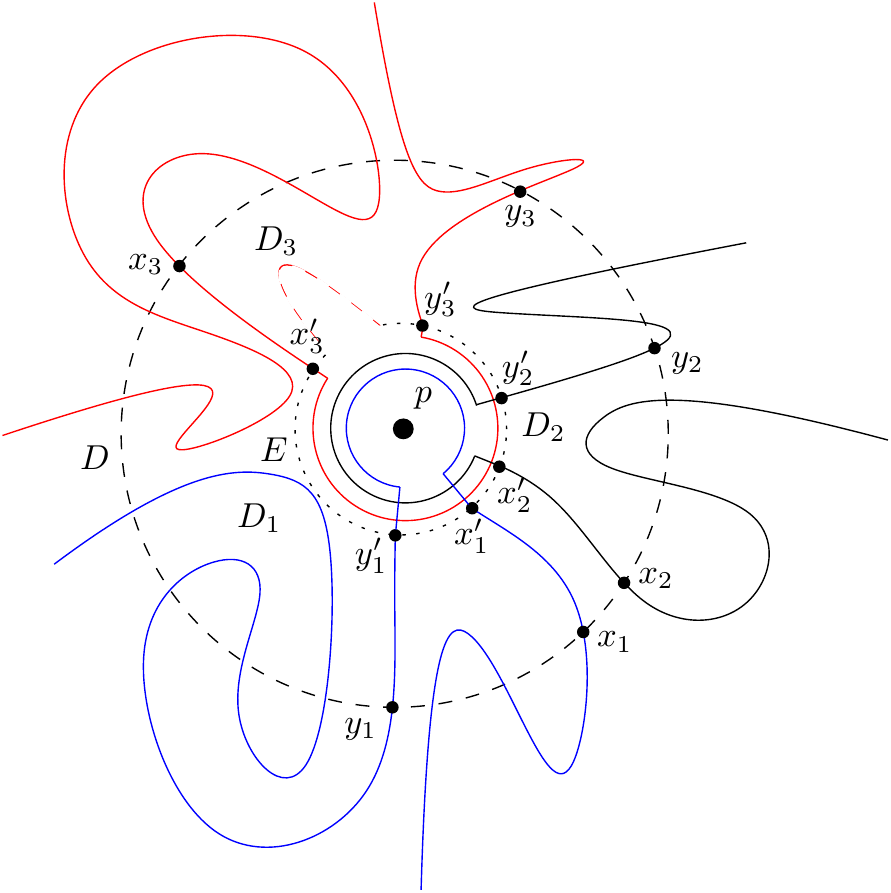}}
		\caption{Turning stabbed pseudo-disks into internally stabbed pseudo-disks.}
		\label{fig:2internally}		
	\end{figure}
	It is easy to see that the resulting family $\F'$ is a pseudo-disk family such that $H(S,\F)= H(S,\F')$.
\end{proof}

\subsection{Internally stabbed pseudo-disks and pseudo-parabolas}

In this section we prove that hypergraphs defined by internally stabbed pseudo-disks are equivalent to hypergraphs defined by pseudo-parabolas.
Following \cite{lenses}, we say that the combinatorial structure of an arrangement is its face lattice together with the containment relations between the regions and the faces. We call two arrangements combinatorially equivalent if the face lattices of their arrangements are isomorphic. 

Let $\C$ be a family of pseudo-parabolas and let $S$ be a finite set of points.
Recall that the hypergraph $H(S,\C)$ is the hypergraph whose vertex set is $S$ and whose hyperedge set consists of every subset $S' \subset S$ that is exactly the points from $S$ that lie on or above some pseudo-parabola in $\C$. 
We say that $\C$ is \emph{even} if every pair of pseudo-parabolas in $\C$ is either disjoint or intersects exactly twice.
It is easy to see that $\C$ is even if and only if the vertical orders of its members in $-\infty$ and $+\infty$ are the same.

\begin{proposition}\label{prop:even}
	Let $\C$ be a family of pseudo-parabolas and let $S$ be a set of points.
	Then there exists a family of pseudo-parabolas $\C'$ such that $H(S,\C)$ and $H(S,\C')$ are isomorphic and $\C'$ is even.
\end{proposition}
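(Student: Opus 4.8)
The plan is to leave every pseudo-parabola of $\C$ unchanged on a right half-plane $\{x\ge x_0\}$ that contains all of $S$ and all crossings among the curves, and to re-route the portions of the curves lying in $\{x<x_0\}$ so that the vertical order of the family at $-\infty$ is forced to coincide with its vertical order at $+\infty$. By the observation recalled just before the statement, this coincidence is exactly what it means for the family to be even; and since we only modify the curves over a region containing no point of $S$, the set of points of $S$ lying on or above each curve will be unchanged, so $H(S,\C)$ will be preserved (we get equality, not just isomorphism). As a harmless preliminary step I would also assume that no two curves of $\C$ are tangent, i.e.\ that every intersection of two curves is a proper crossing: a touching point of two curves can be removed by an arbitrarily small local perturbation that makes the two curves disjoint near that point, and this point may be taken distinct from every point of $S$ (and even if it were some $s\in S$, nudging one curve away from $s$ keeps $s$ on or above both curves), so this does not change $H(S,\C)$.

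Concretely, I would pick $x_0\in\R$ strictly smaller than the $x$-coordinate of every point of $S$ and of every crossing of two curves of $\C$ (finitely many, since the curves are finitely many and pairwise cross at most twice). Then on $\{x\le x_0\}$ the curves of $\C$ are pairwise disjoint, so they meet the vertical line $\ell=\{x=x_0\}$ in $|\C|$ distinct points; let $\prec_0$ denote the bottom-to-top order in which they meet $\ell$ --- equivalently, the vertical order of $\C$ at $-\infty$ --- and let $\prec_\infty$ denote the vertical order of $\C$ at $+\infty$. Call a pair of curves \emph{inverted} if its relative order in $\prec_0$ differs from its relative order in $\prec_\infty$. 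Since every crossing has $x$-coordinate exceeding $x_0$, an inverted pair crosses an odd number of times, hence --- pseudo-parabolas crossing at most twice --- exactly once, while a non-inverted pair crosses $0$ or $2$ times. The re-routing is then: for each $c\in\C$ replace the part of $c$ in $\{x<x_0\}$ by a new $x$-monotone arc running off to $-\infty$, the new arcs chosen so that (i) each attaches continuously to the point $c\cap\ell$, (ii) as $x\to-\infty$ the new arcs occur in the vertical order $\prec_\infty$, and (iii) together they form an $x$-monotone wiring diagram in which two arcs cross exactly once when the corresponding pair of curves is inverted, and do not cross at all otherwise. Such a diagram exists by the standard fact that the permutation carrying $\prec_0$ to $\prec_\infty$ has a reduced expression as a product of adjacent transpositions, realized by a wiring diagram in which each pair of wires is swapped at most once and precisely the inverted pairs are swapped.

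Let $\C'$ be the family obtained after this re-routing; each of its curves is an $x$-monotone bi-infinite Jordan curve, being the concatenation of a new simple left tail with the unchanged right portion. Every pair of curves of $\C'$ now crosses an even number of times: an originally inverted pair crosses once in $\{x\ge x_0\}$ and once in $\{x<x_0\}$, for a total of two; an originally non-inverted pair crosses $0$ or $2$ times in $\{x\ge x_0\}$ and $0$ times in $\{x<x_0\}$. In particular no pair crosses more than twice, so $\C'$ is again a family of pseudo-parabolas, and since every pair is either disjoint or crosses exactly twice, $\C'$ is even. Finally, $\C$ and $\C'$ coincide on $\{x\ge x_0\}$, which contains $S$, so a point of $S$ lies on or above a curve of $\C'$ exactly when it lies on or above the corresponding curve of $\C$; hence $H(S,\C')=H(S,\C)$, which is in particular isomorphic to $H(S,\C)$.

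The one point that needs care is property (iii): the re-routing inside $\{x<x_0\}$ must add exactly one crossing for each inverted pair and none for any other pair, so that no pair of curves ends up with three or more crossings and $\C'$ stays a pseudo-parabola family. This is precisely what a reduced word for the permutation $\prec_0\mapsto\prec_\infty$ guarantees, and I expect it to be the only substantive step; everything else is bookkeeping, modulo the small tangency technicality dealt with at the start.
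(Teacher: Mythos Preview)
Your proof is correct and follows essentially the same idea as the paper's: re-route the curves on one side of a vertical line that separates the modification zone from $S$ and from all existing crossings, so as to make the vertical orders at $\pm\infty$ agree. The paper happens to modify the right tails rather than the left, and it implements the re-routing more concretely---each curve is sent by a straight segment to the point $(M+1,i)$ (where $i$ is its rank at $-\infty$) and then continued horizontally---so that the ``at most one crossing per pair, and exactly for inverted pairs'' property is automatic from the geometry of segments between two vertical lines, without appealing to reduced words or wiring diagrams. Your argument is equally valid, just slightly more abstract in that step.
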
 

\begin{proof}
	Let $x=M$ be a vertical line to the right of all the intersection points in the arrangement of $\C$ and the points of $S$. For each pseudo-parabola $C \in \C$ do the following. Suppose that $C$ is the $i$th pseudo-parabola in the bottom to top order of the pseudo-parabolas in $\C$ at $-\infty$. Replace the part of $C$ to the right of $x=M$ with a straight-line segment between the intersection of $x=M$ and $C$ and the point $(M+1,i)$ and a horizontal ray to $+\infty$ that starts at the latter point.
	It is easy to see that this results in an even family of pseudo-parabolas $\C'$ such that $H(S,\C)=H(S,\C')$.
\end{proof}

Given an even family of $n$ pseudo-parabolas $\C$, we can obtain a family of stabbed pseudo-disks as follows. Let $M$ be a large positive integer such that all the intersection points of pseudo-parabolas in $\C$ are between the lines $x=-M$ and $x=M$ and every pseudo-parabola in this range lies below the line $y=M$.
For each pseudo-parabola $C \in \C$ do the following. Suppose that $C$ is the $i$th pseudo-parabola in the bottom to top order of the pseudo-parabolas in $\C$ at $-\infty$ and $+\infty$.
Replace the parts of $C$ left of $x=-M$ and right of $x=M$ with a polygonal chain that connects the following points in this order: the intersection of $C$ and $x=-M$; the point $(-M-n+i,M)$; the point $(0,M+n-i)$; the point $(M+n-i,M)$; and the intersection of $C$ and $x=M$.

We call this arrangement of stabbed pseudo-disks the \emph{compactification} of $\C$. This new arrangement is combinatorially equivalent to the arrangement of $\C$ except that pairs of faces going to $-\infty$ and $+\infty$ between the same pseudo-parabolas are merged. 

Now we show that the converse also holds.

\begin{thm}\label{thm:equi}
	Compactifications of even pseudo-parabolas are combinatorially equivalent to internally stabbed pseudo-disks. That is, for every finite stabbed pseudo-disk arrangement $\mathcal D$ there is an arrangement of even pseudo-parabolas whose compactification gives an arrangement combinatorially equivalent to the arrangement $\mathcal D$ and vice versa.
\end{thm}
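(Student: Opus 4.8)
The plan is to prove the two directions of the equivalence separately. One direction — that the compactification of an arbitrary even pseudo-parabola family is, combinatorially, an internally stabbed pseudo-disk arrangement — is essentially contained in the construction given just before the theorem and reduces to a routine verification. The other direction — that \emph{every} finite internally stabbed pseudo-disk arrangement is obtained this way — is the substantial part, and I would deduce it from Lemma~\ref{lem:ray} by ``cutting the plane open'' along an arc through the stabbing point and then monotonizing the resulting curves.

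For the easy direction, let $\C'$ be an even family of $n$ pseudo-parabolas. I would check that the nested polygonal cappings described above turn each $C\in\C'$ into a Jordan region; that for $i<j$ the capping chain of $C_i$ is routed entirely outside that of $C_j$ on the left, on the right and over the top, so that the boundaries of the capped regions cross exactly as often (zero or two times, always properly) as the corresponding pseudo-parabolas, making the family a pseudo-disk family; and that a point just above the topmost pseudo-parabola near $x=0$ lies in the interior of every capped region, so the family is internally stabbed. Finally I would verify that the only faces of the arrangement of $\C'$ that get identified by the capping are exactly the pairs of faces escaping to $x=-\infty$ and to $x=+\infty$ with the same set of pseudo-parabolas on or above them (together with the two unbounded faces of $\C'$ that become the single outer face), so that the face lattice of the compactification is the face lattice of the arrangement of $\C'$ with precisely these identifications — which is the meaning of ``combinatorially equivalent to the arrangement of $\C'$'' as used in the rest of the paper.

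For the converse, let $\mathcal D=\{D_1,\dots,D_n\}$ be a finite internally stabbed pseudo-disk arrangement and let $p$ lie in the interior of every $D_i$; then $p$ is on no boundary, and each $\partial D_i$ is a simple closed curve winding once around $p$, i.e.\ a non-contractible curve in the open annulus $\R^2\setminus\{p\}$. By Lemma~\ref{lem:ray} (applied with $q$ any point in the open outer face, so all its boundary intersections are distinct and it avoids every arrangement vertex) there is a Jordan arc $\gamma$ from $p$ to infinity that meets each $\partial D_i$ exactly once and in pairwise distinct points; $\gamma$ is a proper arc joining the two ends of this annulus. Cutting $\R^2\setminus\{p\}$ along $\gamma$ yields a strip whose two vertical sides are the two copies of $\gamma$, whose bottom is the (split) point $p$, and whose top is infinity; since cutting along $\gamma$ does not affect the mutual intersections of the $\partial D_i$ (which were even and at most two), each $\partial D_i$ becomes a simple arc $c_i$ joining two points of the same height on the two vertical sides, these $n$ heights being distinct, and the $c_i$ pairwise cross zero or two times. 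I would then apply a \emph{monotonization} homeomorphism of the strip (fixing the four sides setwise) making every $c_i$ the graph of a function of the horizontal coordinate, extend each resulting curve to a bi-infinite $x$-monotone curve by two tails running to $x=\pm\infty$ at its prescribed level so that tails of distinct curves do not cross, and obtain a family $\C'$ of pseudo-parabolas; this family is automatically \emph{even} because the two vertical sides were the same copy of $\gamma$ and hence carry the same vertical order of the $n$ levels at both ends. The compactification of $\C'$ re-glues the strip along $\gamma$, that is, it re-identifies exactly the pairs of faces that $\gamma$ separated, so its face lattice is that of $\mathcal D$. (The purely ``stabbed'' case reduces to the internally stabbed one by the perturbation of Proposition~\ref{prop:perturb}, which can be carried out without changing the face lattice.)

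The main obstacle is the monotonization step: simultaneously straightening all of the arcs $c_i$ in the strip to graphs of functions while preserving their pairwise crossing pattern. This is the ``cylinder'' analogue of the classical fact that every pseudoline arrangement is isomorphic to a wiring diagram, and it is precisely an elementary substitute for the ray-sweepability of stabbed pseudo-disks of Snoeyink and Hershberger~\cite{snoeyink}. I expect to prove it by a sweeping argument built on Lemma~\ref{lem:ray}: carry a ray from $p$ once around $p$, back to its starting position, so that it always meets each $\partial D_i$ exactly once, and read off each $\partial D_i$ as a graph over the sweep parameter; the care needed is in showing the sweep can be performed continuously (adding intermediate rays through the faces met, via repeated applications of Lemma~\ref{lem:ray}, and interpolating) and closes up. A secondary technical point, in line with the footnote earlier in the paper, is to treat degeneracies (three or more boundaries through a common point, tangencies) without assuming general position; I would handle this by first perturbing within the pseudo-disk family to reach general position, performing the construction, and checking that the face lattice is insensitive to the perturbation.
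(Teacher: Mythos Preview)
Your outline is sound and the easy direction matches the paper. The difference is in the hard direction, and it is exactly the step you yourself flag as ``the main obstacle'': the monotonization of the cut-open boundaries inside the strip. You do not prove this step; you say you \emph{expect} to carry out a continuous sweep of a ray around $p$, interpolating between applications of Lemma~\ref{lem:ray}. But making such a sweep continuous and having it close up is precisely the content of the Snoeyink--Hershberger machinery the paper set out to avoid, so as written your argument has a genuine gap at its central point.

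The paper sidesteps this gap by \emph{discretizing} the sweep. Instead of one cut $\gamma$ followed by a global straightening, it builds a finite fan of pairwise non-crossing arcs $\gamma_0,\gamma_1,\dots,\gamma_k$ from the stabbing point $o$ to infinity, one through \emph{each vertex} of the arrangement (plus a base ray $\gamma_0$ and a close copy). Each new $\gamma_i$ is produced by Lemma~\ref{lem:ray}, and is forced to stay in its angular sector by temporarily adding the current ``pie slice'' as an extra pseudo-disk before invoking the lemma; this is the trick that keeps the rays mutually disjoint without any continuity argument. Because every arrangement vertex lies on some $\gamma_i$, the cyclic order in which $\partial D_j$ meets the successive rays never changes between two consecutive rays, so one can simply record, for each $\gamma_i$, the rank of the crossing with $\partial D_j$ and plot these ranks on consecutive vertical lines, joining them by straight segments. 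The resulting polygonal curves are automatically $x$-monotone pseudo-parabolas whose compactification has the same face lattice as $\mathcal D$. In short: where you need a homeomorphism of the strip, the paper needs only finitely many rays through the arrangement vertices and linear interpolation between them; this is what makes the proof self-contained.
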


In \cite{lenses} it is proved that for any finite family of pseudo-disks in general position (that is, no three of their boundaries intersect in a common point) all containing a point $p$, there exists a combinatorially equivalent family of pseudo-disks, all of which are star-shaped with respect to $p$\footnote{A region is star-shaped with respect to $p$ if every line through $p$ intersects the region in a segment containing $p$.}. This easily implies Theorem~\ref{thm:equi} for stabbed pseudo-disks in general position. 
In order to prove Theorem~\ref{thm:equi} for stabbed pseudo-disks that are not in general position one can track the proof~\cite{lenses} (it seems that the general position assumption is not crucial),
however, we choose to provide a different proof based on Lemma~\ref{lem:ray}.

\begin{proof}[Proof of Theorem \ref{thm:equi}]
	As we mentioned above, the compactification of even pseudo-parabolas yields a combinatorially equivalent arrangement of internally stabbed pseudo-disks.
	
	For the other direction, consider an arrangement of pseudo-disks all containing a point $o$ in their interiors. Let $q_0$ be a point in the outer face of the arrangement, and also ``fix'' an infinitely far vertex $\infty$. 
	
	Denote the (finitely many) vertices of the arrangement of the pseudo-disks (that is, the intersection points of the boundaries of the pseudo-disks) by $q_2,q_3,\dots q_k$. Apply Lemma~\ref{lem:ray} to $q_0$ to get a curve $\gamma_0$ connecting $o$ with $\infty$ through $q_0$.
	For technical reasons, take also a curve $\gamma_{1}$ very close to $\gamma_1$ (with no intersection points of boundaries between them) and let $q_{1}$ be a point on it very close to $q_0$.
	
	In a general step, we have internally disjoint curves $\gamma_j$ for $0\le j\le i-1$ connecting $o$ to $\infty$ through $q_j$, 
	and we wish to create a curve $\gamma_i$ using Lemma \ref{lem:ray} connecting $o$ to $\infty$ through $q_i$. However, this curve may intersect previous curves, which we can rectify in several ways; here we present one possible solution.
		
	The previous curves slice up the plane around $o$ into ``pie-slices''. 
	Consider the slice containing $q_i$ and close it far away (outside a disk containing all pseudo-disks) to form a bounded region containing $q_i$ and the virtual vertex $\infty$.
	Define $Q_i$ as the union of this bounded region with a small enough disk around $o$; this disk should be so small that it is inside every pseudo-disk (such a small disk exists as all pseudo-disks are homeomorphic to a disk and their interiors contain $o$).
	By the properties of the curves, if we add $Q_i$ to the pseudo-disk family, we will still have a stabbed pseudo-disk family. Apply Lemma~\ref{lem:ray} to this family to get a curve $\gamma_i$ from $o$ to $\infty$ through $q_i$. Since $\gamma_i$ does not intersect the boundary of $Q_i$ 
	it cannot intersect any of the previous curves $\gamma_j$, $j < i$ (see Figure~\ref{fig:disks2parabolas1} for an example).
\begin{figure}
	\centering
	\subfloat[Adding the curve $\gamma_7$]{\label{fig:disks2parabolas1}
		{\includegraphics[width=6cm]{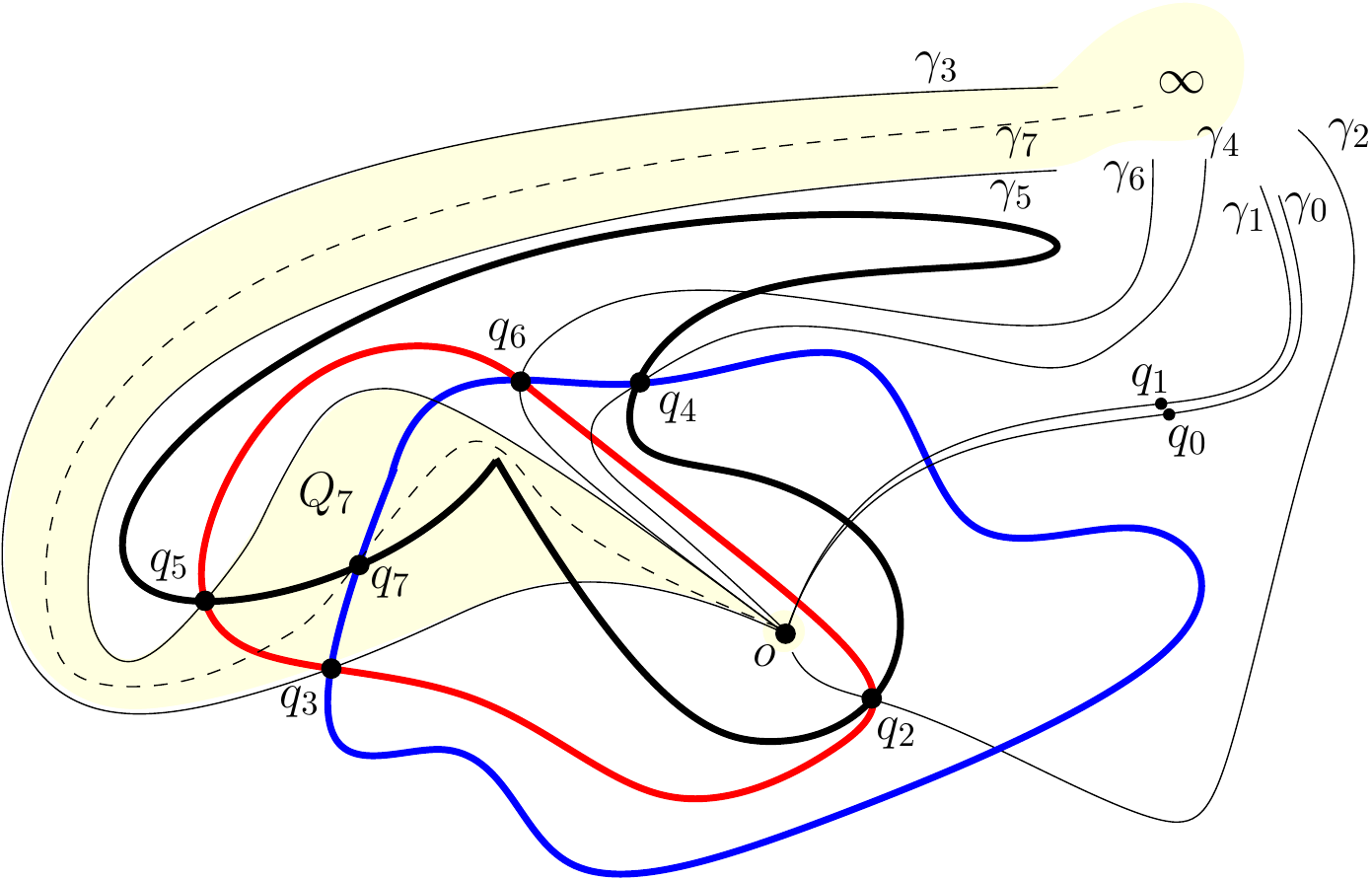}}}
	\hspace{5mm}
	\subfloat[Relabeling the curves]{\label{fig:disks2parabolas2}
		{\includegraphics[width=6cm]{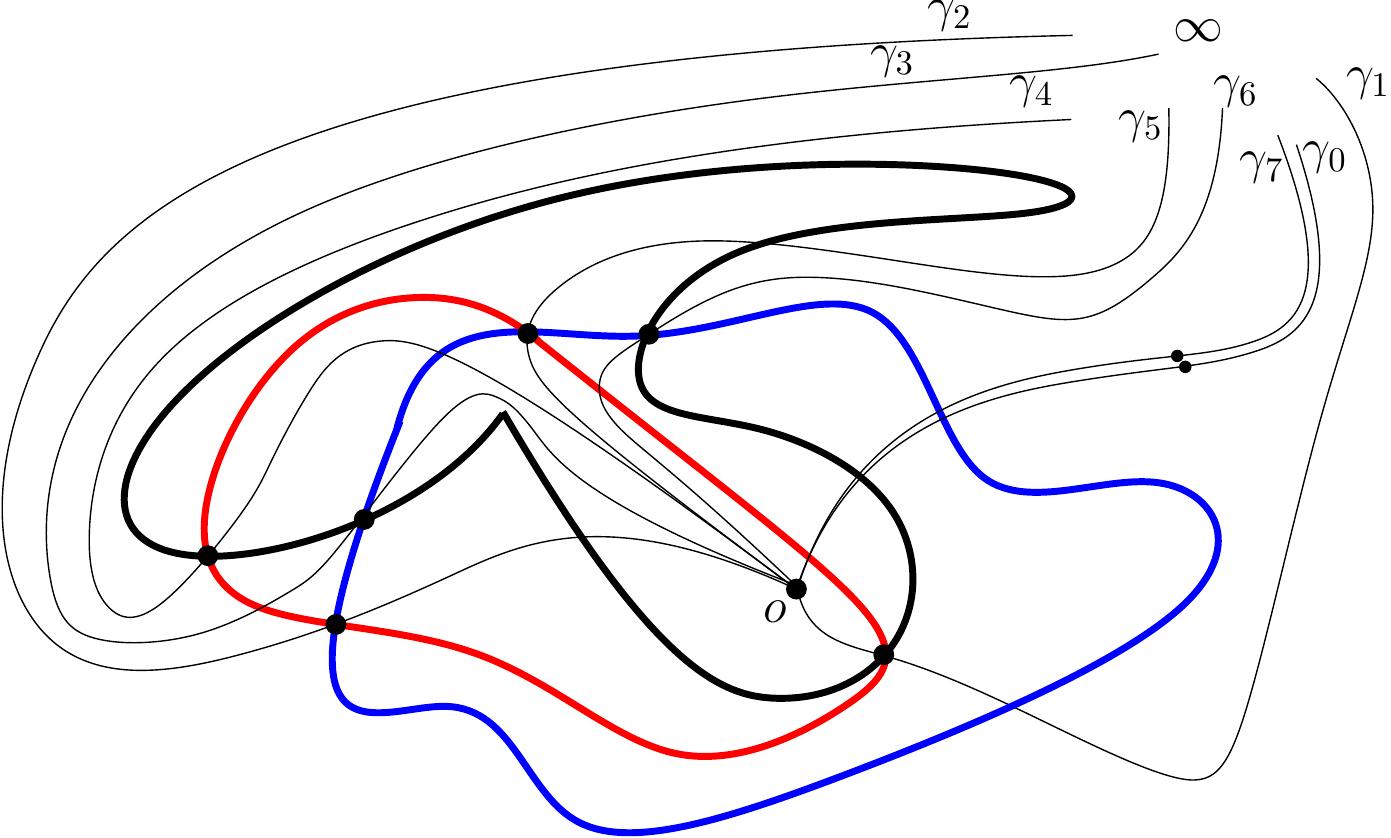}}}
	\hspace{5mm}
	\subfloat[The pseudo-parabolas arrangement]{\label{fig:disks2parabolas3}
		{\includegraphics[width=6cm]{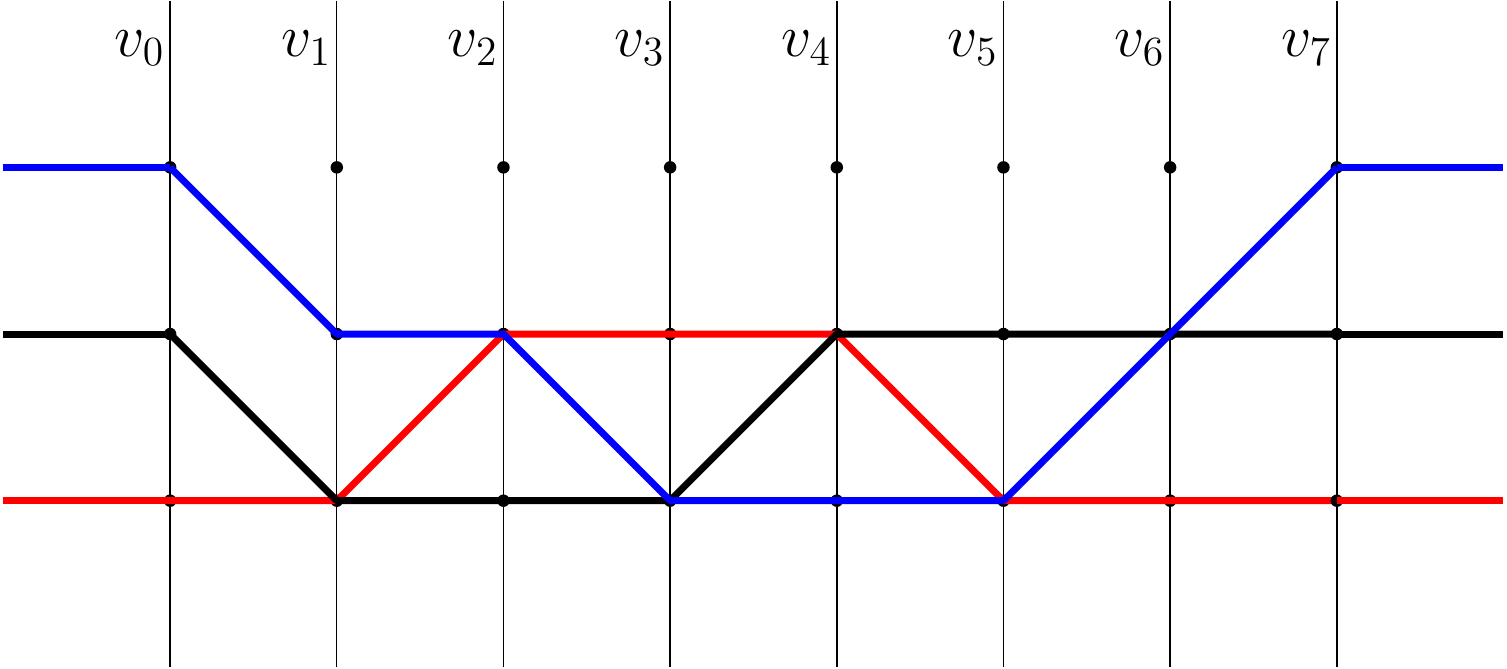}}}
	\caption{Illustrations for the proof of Theorem \ref{thm:equi}.}
	\label{fig:disks2parabolas}
\end{figure}	
	
	For convenience, we relabel the the curves according to their circular order around $o$, such that $\gamma_0$ remains the same curve and $\gamma_1$ becomes the last curve $\gamma_k$ (see Figure~\ref{fig:disks2parabolas2} for an example).
	Let $D_i$ be a pseudo-disk. Note that $\partial D_i$ intersect each of the curves exactly once and the interior of the segment of $\partial D_i$ between its intersection with two consecutive curves is crossing-free.
	Fix $k+1$ distinct vertical lines $v_0,v_1,\ldots,v_k$ ordered from left to right, and on each line $v_i$ designate $n_i$ points, where $n_i$ is the number of intersection points on $\gamma_i$, $i=0,\ldots,k$.
	Suppose also that the intersection points on every curve $\gamma_i$ are ordered from $o$ to $\infty$.
	Let $D_j$ be a pseudo-disk. Then we draw a corresponding polygonal curve (pseudo-parabola) $C_j$ as follows. For every $i=0,\ldots,k-1$ we connect by a straight-line segment the $a$th bottom-to-top designated point on $v_i$ to the $b$th bottom-to-top designated point on $v_{i+1}$, if $D_j$ intersects $\gamma_i$ (resp., $\gamma_{i+1}$) at the $a$th (resp., $b$th) intersection point along $\gamma_i$ (resp., $\gamma_{i+1}$).
	To make $C_j$ bi-infinite, we add horizontal rays with apexes at its left and right endpoints. See Figure~\ref{fig:disks2parabolas3} for an example.
	
	It is not hard to see that the resulting set of polygonal curves are pseudo-parabolas whose compactification (or compactification after reflecting about the $x$-axis) is combinatorially equivalent to the arrangement of pseudo-disks.
	This completes the proof. 
\end{proof}

\subsection{Pseudo-parabolas and $ABAB$-free hypergraphs}
\label{sec:parabolascomb}

A family of curves in the plane is called \emph{$t$-intersecting} if every two curves in the family intersect in at most $t$ points. Let $\C$ be a family of $x$-monotone bi-infinite curves in the plane and let $S$ be a finite set of points. Then the hypergraph $H(S,\C)$ has $S$ as its vertex set and contains a hyperedge $S'$ for every subset $S' \subseteq S$ for which there is a curve $c \in \C$ such that $S'$ consists of the points in $S$ that lie on or above $c$.
The following lemmas establish a generalization of the equivalence of $ABAB$-free hypergraphs and hypergraphs that are defined by pseudo-parabolas.

\begin{lemma}\label{lem:parabolas2ABAB}
	Let $\C$ be a family of $t$-intersecting $x$-monotone bi-infinite curves in the plane and let $S$ be a finite set of points. Then $H(S,\C)$ is an $(AB)^{(t+2)/2}$-free hypergraph.
\end{lemma}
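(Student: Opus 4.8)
The plan is to use the natural left-to-right order on $S$. Order the points of $S$ by increasing $x$-coordinate, breaking ties by increasing $y$-coordinate, and call this order $<$. I claim that $H(S,\C)$ is $(AB)^{(t+2)/2}$-free with respect to $<$. Since every curve in $\C$ is $x$-monotone and bi-infinite, each $c\in\C$ is the graph of a continuous function $\R\to\R$, which I will also denote by $c$; a point $(x,y)$ lies on or above $c$ exactly when $y\ge c(x)$, and apart from the hypothesis that $\C$ is $t$-intersecting this is the only property of the curves the proof will use.

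Suppose for contradiction that two hyperedges $A$ and $B$ of $H(S,\C)$ form an $(AB)^{(t+2)/2}$-sequence, witnessed by points $e_1<e_2<\dots<e_{t+2}$ with $e_j\in A\setminus B$ for odd $j$ and $e_j\in B\setminus A$ for even $j$ (so in particular $A\ne B$). Fix curves $c_A,c_B\in\C$ realizing $A$ and $B$; since $A\ne B$ these are distinct curves. Write $e_j=(x_j,y_j)$. The first step is to observe that $x_1<x_2<\dots<x_{t+2}$ strictly: any two consecutive witnesses $e_j,e_{j+1}$ lie one in $A\setminus B$ and the other in $B\setminus A$, so they cannot share an $x$-coordinate — if $x_j=x_{j+1}$ then $y_j<y_{j+1}$ by the tie-breaking rule, and then whichever of $c_A,c_B$ has $e_j$ on or above it also has $e_{j+1}$ strictly above it, forcing $e_{j+1}$ into the same hyperedge as $e_j$, a contradiction.

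The second step is a one-line intermediate value theorem argument. Set $f(x):=c_A(x)-c_B(x)$, a continuous function on $\R$. For odd $j$ we have $c_A(x_j)\le y_j<c_B(x_j)$, hence $f(x_j)<0$; for even $j$ we have $c_B(x_j)\le y_j<c_A(x_j)$, hence $f(x_j)>0$. Thus $f$ has strictly alternating signs at the points $x_1<x_2<\dots<x_{t+2}$, so it has a zero in each of the $t+1$ disjoint open intervals $(x_j,x_{j+1})$. These $t+1$ distinct zeros yield $t+1$ distinct points of $c_A\cap c_B$, contradicting that $\C$ is $t$-intersecting. This proves the lemma.

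I expect the only delicate point to be the first step, i.e.\ the handling of points of $S$ that share an $x$-coordinate; one could alternatively dispose of it by a generic perturbation of $S$ that changes no above/below relation to any curve, but the tie-breaking argument above seems cleanest and avoids having to argue that such a perturbation exists. Everything else — that $x$-monotone bi-infinite curves are graphs of continuous functions, and the sign bookkeeping for $f$ — is routine.
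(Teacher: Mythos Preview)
Your proof is correct and follows essentially the same approach as the paper: order $S$ left-to-right with the same $y$-coordinate tie-break, and argue that each adjacent pair of witnesses forces an intersection of $c_A$ and $c_B$ between them, yielding $t+1$ crossings. Your write-up is in fact slightly more careful than the paper's, since you explicitly rule out the case of two consecutive witnesses sharing an $x$-coordinate and make the intermediate value step explicit via $f=c_A-c_B$; the paper leaves both of these implicit.
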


\begin{proof}
	Order the points in $S$ from left to right. If two points have the same $x$-coordinate, then the point with the smaller $y$-coordinate precedes the other point. We claim that under this order $H(S,\C)$ is $(AB)^{(t+2)/2}$-free.
	Indeed, let $A$ and $B$ be two hyperedges of $H(S,\C)$ and let $c_A$ and $c_B$ be their corresponding curves.
	An $AB$-sequence implies that there are points $p_1,p_2 \in S$ with $p_1$ to the left of $p_2$ such that $p_1$ is on or above $c_A$ and below $c_B$ and $p_2$ is on or above $c_B$ and below $c_A$. Therefore $c_A$ and $c_B$ must intersect at some point between $p_1$ and $p_2$.
	It follows that an $(AB)^{(t+2)/2}$-sequence (whose length is $t+2$) implies that there are at least $t+1$ intersection points between $c_A$ and $c_B$, which is impossible since $\C$ is $t$-intersecting.
\end{proof}

\begin{lemma}\label{lem:ABAB2parabolas}
	Let $t \ge 1$ be half of an integer and let $H$ be a finite $(AB)^t$-free hypergraph. Then there is a family $\C$ of $(2t-2)$-intersecting $x$-monotone bi-infinite curves in the plane and a finite point set $S$ such that $H(S,\C)$ is isomorphic to $H$.
\end{lemma}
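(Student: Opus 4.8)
The plan is to go the opposite direction of Lemma~\ref{lem:parabolas2ABAB}: starting from an $(AB)^t$-free hypergraph $H$ on an ordered vertex set $v_1 < v_2 < \cdots < v_n$, I would place the vertices as points on a vertical line $x = 0$ (with increasing $y$-coordinate matching the order, so $v_i$ is at height $i$), and for each hyperedge $E$ of $H$ build an $x$-monotone bi-infinite curve $c_E$ that passes above exactly the points of $E$ and below the rest. The natural construction is to let $c_E$ be a polygonal (or smooth) curve that, near $x=0$, weaves between consecutive vertices: it stays just below $v_i$ when $v_i \notin E$ and just above $v_i$ when $v_i \in E$. Concretely, one can take $c_E$ to be, in a small horizontal strip around $x=0$, a curve whose height function oscillates so as to separate $E$ from its complement with exactly the ``turns'' forced by the pattern of membership; away from the strip the curve can be extended to $\pm\infty$ as horizontal rays at a height that depends on $E$ (e.g.\ sorting the hyperedges by some linear functional so that the family is even, as in Proposition~\ref{prop:even}, though evenness is not strictly needed here).

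The key step is to bound the number of pairwise intersections of two such curves $c_A$ and $c_B$ by $2t-2$. First I would observe that an intersection of $c_A$ and $c_B$ (other than possible shared endpoints at infinity) can occur only inside the vertical strip near $x=0$ where the curves are doing their weaving, since outside the strip each curve is a pair of horizontal rays and two horizontal rays cross at most — actually we should set up the heights at infinity so they don't cross there at all, or cross in a controlled way. Inside the strip, the crucial combinatorial fact is: between two consecutive vertices $v_i, v_{i+1}$, the relative vertical order of $c_A$ and $c_B$ can change only if $v_i$ and $v_{i+1}$ are ``separated oppositely'' by $A$ and $B$ — i.e.\ one of them lies in $A\setminus B$ with structure forcing a swap. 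More precisely, each genuine crossing of $c_A$ and $c_B$ forces a witness: a pair of consecutive vertices $p < q$ with $p$ above $c_A$/below $c_B$ and $q$ above $c_B$/below $c_A$ (or the symmetric configuration), exactly the local picture of an $AB$ or $BA$ alternation. Stringing together $k$ crossings gives an alternating sequence of length roughly $k+1$ realizing an $(AB)^{(k+1)/2}$-sequence between $A$ and $B$. Since $H$ is $(AB)^t$-free, we get $k+1 \le 2t-1$, i.e.\ $k \le 2t-2$, as required.

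The main obstacle — and the place where care is needed — is making the correspondence ``crossing $\leftrightarrow$ alternation'' precise and tight, so that $k$ crossings really do yield an $(AB)$-sequence of length $k+1$ and not something shorter. This requires choosing the weaving curves canonically: I would demand that $c_E$ change direction (turn from going up to going down or vice versa) as few times as possible subject to separating $E$ from its complement, so that its height function is, say, unimodal or ``monotone with the minimum number of runs'' on the membership pattern. Then two curves cross precisely when their membership patterns disagree in a way that forces the height functions to swap order, and consecutive crossings can be read off as consecutive terms of a genuine alternating subsequence of the common ordered vertex set, with the $A$-witnesses lying in $A\setminus B$ and the $B$-witnesses in $B\setminus A$ (points in $A\cap B$ or outside both are above/below both curves and cannot host a crossing). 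Once this dictionary is established, the intersection bound follows immediately from $(AB)^t$-freeness, and checking that $c_A,c_B$ are $x$-monotone and bi-infinite is routine from the construction. Finally, $H(S,\C) \cong H$ holds by construction, since $c_E$ was built to have exactly $E$ as its set of points lying on or above it, and one checks no extra hyperedges appear by verifying the curves don't accidentally separate some other subset — which can be arranged by perturbing heights into general position.
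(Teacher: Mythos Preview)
Your construction has a fatal geometric error at the very first step. You place the vertices on the vertical line $x=0$, at heights $1,2,\dots,n$. But an $x$-monotone bi-infinite curve is the graph of a function $y=f(x)$, so it meets the line $x=0$ at the single height $f(0)$. A point $(0,i)$ lies on or above the curve iff $i\ge f(0)$; hence the hyperedge realized by any such curve is an upward interval $\{v_i:i\ge\lceil f(0)\rceil\}$. No amount of ``weaving in a small horizontal strip around $x=0$'' helps, because containment is decided solely by the value $f(0)$. Thus your family $\C$ can only realize interval hypergraphs, not arbitrary $(AB)^t$-free ones. The paper avoids this by placing the points on a \emph{horizontal} line, at $(1,0),(2,0),\dots,(n,0)$, so that a curve can independently dip below or rise above each point.

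Even after swapping axes, your plan to force the tight crossing bound by drawing each $c_E$ with ``the minimum number of runs'' is not enough. Two such canonical curves can still cross more often than the alternation pattern of $A$ and $B$ dictates (for instance, redundant crossings can occur in stretches where both curves are on the same side of all points, or near $\pm\infty$). The paper handles this with a separate, essential step you are missing: after drawing an initial family of curves that realizes $H$, it repeatedly removes \emph{empty lenses} (regions between two consecutive crossings of $c_A$ and $c_B$ containing no point of $S$) by swapping the two arcs bounding the lens. Only after this cleanup is it guaranteed that between any two consecutive crossings---and before the first and after the last---there sits a point of $S$ lying above one curve and below the other, yielding an $(AB)^{(k+1)/2}$-sequence from $k$ crossings and hence $k\le 2t-2$. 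Your ``crossing $\leftrightarrow$ alternation'' dictionary is the right intuition, but the lens-elimination is what actually makes it precise.
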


\begin{proof}
	Let $v_1,v_2,\ldots,v_n$ be the vertices of $H$ listed in their $(AB)^{t}$-free order.
	Let $S$ be the points $(i,0)$, for $i=1,2,\ldots,n$.
	Let $e_1,e_2,\ldots,e_m$ be the (finite) set of hyperedges of $H$.
	We draw each edge $e_i$ as a polygonal curve $c_i$ as follows.
	Let $v_j$ be the first vertex that belongs to $e_i$.
	Then $c_i$ follows the horizontal line $y=i$ from minus infinity to $(j-2/3,i)$ where it descends linearly to the point $(j-1/3,-i)$.
	If no such vertex $v_j$ exists, then $c_i$ follows $y=i$ to infinity.
	Otherwise, let $v_k$ be first vertex after $v_j$ that does not belong to $e_i$.
	Then $c_i$ follows the horizontal line $y=-i$ until the point $(k-2/3,-i)$ where it ascends linearly to the point $(k-1/3,i)$.
	If no such vertex $v_k$ exists, then $c_i$ follows $y=-i$ to infinity.
	We proceed drawing $c_i$ in this manner such that the points that correspond to vertices in $e_i$ are above it and the points that correspond to vertices that do not belong to $e_i$ are below it.
	
	Let $\C = \{c_1,c_2,\ldots,c_m\}$. Then $\C$ is a family of $x$-monotone bi-infinite curves such that $H(S,\C)$ is isomorphic to $H$, however, $\C$ might not be $(2t-2)$-intersecting since it might contain `unnecessary' crossings between curves.
	Let $c_i$ and $c_j$ be two curves. 
	Note $c_i$ and $c_j$ may intersect at a finite number of points, where they properly cross each other.
	A \emph{lens} is a maximal connected region bounded from above by $c_i$ and from below by $c_j$ or vice versa.
	A lens is \emph{empty} if it does not contain a point from $S$.
	If $c_i$ and $c_j$ define an empty lens, then we interchange the parts of $c_i$ and $c_j$ that bound this empty lens and redraw $c_i$ and $c_j$ at a small
	neighborhood of their crossing points that are incident to this lens such that they no longer intersect at these points. Thus, we reduce the number of crossing points between $c_i$ and $c_j$ while maintaining the same hypergraph defined by $\C$ and $S$.
	We repeat this process for every empty lens. Since the number of crossing points between the curves is finite, this process terminates and we obtain a family of curves $\C'$ such that $H(S,\C')$ is isomorphic to $H$ and no two curves in $\C'$ define an empty lens.
	
	We claim that $\C'$ is also $(2t-2)$-intersecting.
	Indeed, let $c_A$ and $c_B$ be two curves in $\C'$ that intersect (cross) at $k$ points $p_1,p_2,\ldots,p_k$ and let $A$ and $B$ be the two hyperedges of $H$ that these curve realize.
	Then before $p_1$, after $p_k$ and between any two consecutive crossing points there must be a point from $S$ which is below one curve and above the other curve, for otherwise there would be an empty lens.
	These points correspond to vertices that belong to exactly one of the sets $A$ and $B$ and it follows that there is an $(AB)^{(k+1)/2}$ sequence (assuming without loss of generality that $c_A$ is above $c_B$ before $p_1$).
	Since $H$ is $(AB)^t$-free, it follows that $(k+1)/2<t$, and hence $k \le 2t-2$.
\end{proof}

Theorem \ref{thm:equi} and Lemmas~\ref{lem:parabolas2ABAB} and~\ref{lem:ABAB2parabolas} establish Theorem \ref{thm:equiv}, that is, the equivalence of $ABAB$-free hypergraphs and hypergraphs defined by stabbed pseudo-disks.

\section{Conclusions}
\label{sec:conclusions}

In the paper we have shown that pseudo-disk hypergraphs are equivalent to $ABAB$-free hypergraphs, and they are properly $3$-colorable.
Similar questions can be studied about dual-$ABAB$-free hypergraphs as well, which is equivalent to the so-called cover-decomposition problem for stabbed pseudo-disks.
Another version is to forbid 
$ABABA$-sequences cyclically (instead of linearly); such $3$-uniform hypergraphs have a nice geometric representation, as \emph{convex geometric 3-hypergraphs} without \emph{strongly crossing} edges, see Suk \cite{Suk2013}. 
It is also a natural question to ask whether strongly crossing convex geometric (non-uniform) hypergraphs can be always $3$-colored.

We would also like to remark that having VC-dimension at most $2l-1$ is a weaker assumption than being $(AB)^l$-free. For any $c$ and $m$ there are $m$-uniform hypergraphs of VC-dimension $2$ that are not $c$-colorable; the main construction from both \cite{indec} and \cite{indecconcave} can be generalized from $2$-colors to $c$-colors as $m$-uniform hypergraphs of VC-dimension $2$.

%
%
%

An interesting connection to Radon-partitions is the following. Given three points in $\R^1$, they have a unique Radon-partition into two sets, $A$ and $B$, whose convex hulls intersect; the points must follow each other in the order $A,B,A$, so this cannot happen for $ABA$-free families.
Given four points in $\R^2$, there are two possible Radon-partitions; the first is when three points of $A$ contain the only point of $B$ inside their convex hull, while the second is when there are two points in each of $A$ and $B$ such that their connecting segments intersect.
Note that none of these configurations are possible for points from the symmetric difference of convex pseudo-disks, i.e., if $A$ and $B$ are convex pseudo-disks, then we cannot pick points from $A\setminus B$ and $B\setminus A$ that form a Radon-partition.
We wonder whether this has some higher dimensional generalizations, or is just a coincidence.

The most natural problem left open is whether  $\chi_m(\F_\bigcirc) = 3$ or  $\chi_m(\F_\bigcirc) = 4$.


\bibliographystyle{plainurl}
\bibliography{papers}


\appendix

\section{Proof of Theorem \ref{thm:psdisksimplyconn}}\label{appendix}
We start with some basic topological definitions, without going into too much details.

\begin{defi}
	A region in the plane is \emph{connected} if it is not the union of two (or more) disjoint open sets.
	
	A region in the plane is \emph{path-connected} if any two of its points can be connected by a continuous curve inside the region. 
	
	A region in the plane is \emph{simply connected} if it is path-connected and any Jordan curve inside the region can be continuously contracted into a point inside the region. 
\end{defi}

Therefore, simply connectedness implies path-connectedness, and path-connectedness implies connectedness.
Now we state (a variant of) the Topological Helly Theorem \cite{Helly1930,molnar}.

\begin{lemma}\label{lem:molnartop}
	Let $\F$ be a family of at least three simply connected compact regions such that any three of them intersect and the intersection of any two of the regions is path-connected. Then the intersection of all of the regions in $\F$ is also simply connected.
\end{lemma}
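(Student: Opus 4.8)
The plan is to split Lemma~\ref{lem:molnartop} into a nearly free ``topological'' part --- that $\bigcap\F$ is simply connected as soon as it is nonempty and connected --- and the genuine ``Helly'' part --- that $\bigcap\F$ is in fact nonempty and connected --- and to prove the latter by strong induction on $|\F|$, with the case $|\F|=4$ carrying all the weight.

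First I would record a remark valid for any finite family of simply connected regions in the plane: every connected component of $F_1\cap\dots\cap F_k$ is simply connected. Indeed, a loop $\gamma$ lying in such a component is null-homotopic in each $F_i$, and in the plane a loop that is null-homotopic in a region ``encloses'' only points of that region; hence the disk that $\gamma$ fills lies inside every $F_i$, so inside the intersection, so inside the component, and $\gamma$ contracts there. The same idea, combined with van Kampen's theorem, shows that $F_i\cup F_j$ is simply connected whenever $F_i\cap F_j$ is path-connected --- the only point at which the pairwise path-connectedness hypothesis is used. It therefore suffices to prove that $\bigcap\F$ is nonempty and connected.

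Both of these I would establish by strong induction on $n=|\F|$, re-proving the full lemma for each $n$. For $n=3$, nonemptiness is a hypothesis, and connectedness follows on writing $\bigcap\F$ as $A\cap B$ with $A=F_1\cap F_2$, $B=F_2\cap F_3$: both are simply connected by the remark, and $A\cup B=(F_1\cup F_3)\cap F_2$ is again an intersection of simply connected planar regions, hence has trivial first homology, so Mayer--Vietoris forces $\widetilde H_0(A\cap B)=0$. (To use Mayer--Vietoris on these closed sets one first thickens them to small open neighbourhoods, which changes nothing because all regions in sight are well-behaved.) The case $n=4$ is identical, once nonemptiness is known, with $A=F_1\cap F_2\cap F_3$, $B=F_2\cap F_3\cap F_4$. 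Finally, for $n\ge 5$ I would merge two members: putting $G_i=F_i\cap F_n$ for $i<n$, each $G_i$ is simply connected by the remark, each $G_i\cap G_j=F_i\cap F_j\cap F_n$ is path-connected by the $n=3$ case, and each $G_i\cap G_j\cap G_k=F_i\cap F_j\cap F_k\cap F_n$ is nonempty by the $n=4$ case, so $\{G_i\}_{i<n}$ satisfies the hypotheses and by induction $\bigcap_{i<n}G_i$ --- which is just $\bigcap\F$ --- is simply connected.

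Everything therefore reduces to --- and the step I expect to be the main obstacle is --- the nonemptiness of $\bigcap\F$ for $n=4$, i.e. the honest planar topological Helly phenomenon. I would argue by contradiction: if $F_1\cap F_2\cap F_3\cap F_4=\emptyset$, then every proper sub-intersection is nonempty (by hypothesis) and contractible (singletons and pairs by the remark, triples by the $n=3$ case), so the nerve of $\{F_1,F_2,F_3,F_4\}$ is exactly $\partial\Delta^3\cong S^2$. Replacing each $F_i$ by a sufficiently small open neighbourhood $U_i$ preserves the nerve and keeps every (now open) nonempty sub-intersection contractible, so by the Nerve Theorem $U_1\cup U_2\cup U_3\cup U_4\simeq S^2$; but $U_1\cup U_2\cup U_3\cup U_4$ is an open subset of the plane, hence has trivial second homology, a contradiction. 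This closes the induction and, with the reduction above, proves the lemma. An alternative to the Nerve Theorem is the classical Radon-type argument --- pick $x_i\in\bigcap_{j\ne i}F_j$, join the four points pairwise by arcs lying inside the appropriate pairwise intersections, fill the four resulting triangular loops into disks $D_i\subseteq F_i$ using the simple connectivity of the $F_i$, and prove $D_1\cap D_2\cap D_3\cap D_4\ne\emptyset$ --- but its concluding ``piercing'' step carries exactly the same topological content, so I would present the Nerve-Theorem version, or simply invoke Molnár~\cite{molnar}.
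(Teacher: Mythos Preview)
The paper does not actually prove Lemma~\ref{lem:molnartop}: it is stated as (a variant of) the Topological Helly Theorem with references to Helly~\cite{Helly1930} and Moln\'ar~\cite{molnar}, and the paper explicitly remarks that even the base case of three sets ``was not fully proved until recently'', pointing to~\cite{kare06,tyva05}. So there is no in-paper proof to compare against; the paper's own results (Lemmas~\ref{lem:molnarintersection}--\ref{lem:sc}) are proved under different hypotheses precisely in order to sidestep these subtleties for the pseudo-disk application.

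Your outline---reduce to nonemptiness and connectedness, handle $n=3,4$ via Mayer--Vietoris and the Nerve Theorem, then induct by intersecting everything with $F_n$---is the standard shape of such arguments and is morally correct. The gap is exactly where you wave your hands: the sentence ``one first thickens them to small open neighbourhoods, which changes nothing because all regions in sight are well-behaved.'' The hypotheses of the lemma allow \emph{arbitrary} compact simply connected planar sets, and for such sets small open neighbourhoods need not retain simple connectivity, nor need pairwise and triple intersections of the neighbourhoods remain contractible or even have the right nerve. This is not a cosmetic issue: the paper notes that the seemingly close variant with ``connected'' in place of ``path-connected'' is actually false~\cite{kare06}, so one cannot expect the topology to be forgiving here. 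Likewise, your ``remark'' that components of $\bigcap F_i$ are simply connected, and that $F_i\cup F_j$ is simply connected via van Kampen, both implicitly assume either openness or some tameness (ANR, local contractibility) that is not given. If you want a self-contained proof at this level of generality, you would need to work with \v{C}ech cohomology for compact pairs (where Mayer--Vietoris and Alexander duality are available without thickening) rather than singular theory on open thickenings; alternatively, as you yourself suggest at the end, simply cite~\cite{molnar,kare06,tyva05}, which is what the paper does.
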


For a complete proof of the base case of Lemma \ref{lem:molnartop} (for three sets), which according to \cite{kare06,tyva05} was not fully proved until recently, see \cite{kare06,tyva05}. In \cite{lenses} it was claimed (referring to \cite{molnar}) that Lemma \ref{lem:molnartop} holds even if we assume that the intersection of any two regions is only connected (instead of path-connected). However, such a stronger variant of Lemma~\ref{lem:molnartop} is not true (not even for three sets) as was shown in \cite{kare06}.
Still, we can avoid some of the topological troubles, as we only need such a statement for pseudo-disks. We will see that the different assumptions in the forthcoming Lemmas \ref{lem:molnarintersection}-\ref{lem:sc} (that we will prove) all hold for stabbed pseudo-disk families, and so do their conclusions.



\begin{lemma}\label{lem:molnarintersection}
	Let $\F$ be a finite family of at least three bounded regions. The intersection of all of the regions in $\F$ is simply connected if all of the following hold:
	\begin{enumerate}
		\item the complement of each region is path-connected;
		\item the intersection of any two of the regions is path-connected; and
		\item if $Q$ is a path-connected component of the intersection of three regions, $q$ is a point in the interior of $Q$ and $r \in \partial Q$, then there exists a path connecting $q$ and $r$ that lies in the interior of $Q$ except for the endpoint $r$.
	\end{enumerate}
\end{lemma}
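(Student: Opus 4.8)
The plan is to reduce the statement to the Topological Helly Theorem (Lemma~\ref{lem:molnartop}) by verifying its two hypotheses for the family $\F$, where the main work is showing that each region in $\F$ is simply connected. First I would observe that for a bounded planar region $R$, being simply connected is equivalent to having a path-connected complement (in $S^2$, or equivalently the complement in $\R^2$ together with the point at infinity is path-connected): a bounded region with connected complement has no ``holes'', which is exactly simple connectedness in the plane. Thus hypothesis~(1) gives that each region in $\F$ is simply connected, and hypothesis~(2) is precisely the requirement in Lemma~\ref{lem:molnartop} that pairwise intersections be path-connected. It remains to establish the three-wise intersection condition: any three regions in $\F$ must intersect, and in fact Lemma~\ref{lem:molnartop} as quoted requires that they intersect (nonemptily). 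This is where hypothesis~(3) must do its job, since (3) is phrased only about a path-connected component $Q$ of a triple intersection, presupposing such a $Q$ exists.

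The subtle point — and what I expect to be the main obstacle — is that Lemma~\ref{lem:molnartop} requires triple intersections to be \emph{simply connected}, not merely nonempty and path-connected, whereas hypothesis~(3) only supplies a local reachability-of-the-boundary property for each path-connected component $Q$. So the crux is to upgrade (3) to: every path-connected component of a triple intersection is simply connected. I would argue this by contradiction: if some component $Q$ of $R_1\cap R_2\cap R_3$ were not simply connected, then $Q$ would enclose a bounded ``hole'' $O$ disjoint from $Q$ but surrounded by $Q$. The hole must meet the complement of at least one $R_i$, say $O\setminus R_1\ne\emptyset$; pick $q$ in the interior of $Q$ and $r\in\partial Q$ on the boundary of this hole, and use hypothesis~(3) to connect $q$ to $r$ inside the interior of $Q$. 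Combining such a path with a path in $R_1^c$ (available by hypothesis~(1)) going from near $r$ out to infinity, and using that $Q$ separates the hole from the unbounded face, one derives a contradiction with the path-connectedness of $R_1^c$ — the exit path would be forced to cross $Q\subset R_1$. Making this separation argument rigorous (Jordan-curve-style) is the technical heart of the proof.

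Once every path-connected component of a triple intersection is known to be simply connected, I still need that the \emph{whole} triple intersection (not just one component) is path-connected, i.e.\ that there is only one component; but this again follows from hypotheses~(1)--(2): if $R_1\cap R_2\cap R_3$ had two components, then using path-connectedness of $R_1\cap R_2$ one can join them by a path in $R_1\cap R_2$ that must leave $R_3$, hence enter $R_3^c$; path-connectedness of $R_3^c$ plus the boundedness of the regions then lets one re-route and derive a contradiction in the same spirit as above. With triple intersections shown to be simply connected (in particular nonempty and path-connected), Lemma~\ref{lem:molnartop} applies to $\F$ directly — each member is simply connected and compact by assumption, pairwise intersections are path-connected by~(2), and triple intersections are nonempty — yielding that $\bigcap_{R\in\F}R$ is simply connected, as claimed. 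Finally I would note, as the paper does, that stabbed pseudo-disk families satisfy all three hypotheses: their complements are path-connected since a pseudo-disk is a Jordan region; pairwise and triple intersections are path-connected because any two (three) stabbed pseudo-disks share the stabbing point and their boundaries cross at most twice (pairwise), which by Lemma~\ref{lem:disjointedges}-type arguments forces the intersection to be a single ``lens''-like simply connected piece; and property~(3) holds because these intersection regions are bounded by finitely many Jordan arcs meeting transversally, so every boundary point is reachable from the interior.
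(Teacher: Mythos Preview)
Your reduction to Lemma~\ref{lem:molnartop} has a genuine gap right at the start: you assert that ``hypothesis~(1) gives that each region in $\F$ is simply connected,'' claiming that for a bounded planar region, simple connectedness is equivalent to having a path-connected complement. This is false. The disjoint union of two closed disks is bounded and has path-connected complement, yet is not simply connected (it is not even connected). The equivalence you want (which is Lemma~\ref{lem:sc} in the paper) requires that the region \emph{itself} be path-connected, and nothing in the hypotheses of Lemma~\ref{lem:molnarintersection} guarantees this. In fact the paper explicitly points out, in the paragraph immediately following the statement of Lemma~\ref{lem:molnarintersection}, that the two lemmas do not imply one another, and gives exactly this counter-example: take $R_1$ to be two disjoint disks and $R_2=R_3$ to be copies of one of those disks; then all three hypotheses of Lemma~\ref{lem:molnarintersection} hold, but $R_1$ is not simply connected, so Lemma~\ref{lem:molnartop} does not apply. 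Your entire plan, which is to verify the hypotheses of Lemma~\ref{lem:molnartop}, therefore cannot succeed as stated. (There is also a minor mismatch: Lemma~\ref{lem:molnartop} asks for compact regions, while Lemma~\ref{lem:molnarintersection} only assumes bounded.)

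The paper takes a completely different route that sidesteps simple connectedness of the individual regions. It first proves a \emph{dual} statement, Lemma~\ref{lem:molnar}, about \emph{unions}: under the complemented hypotheses, the union of the family is simply connected. That lemma is proved directly by a $K_{3,3}$/Hanani--Tutte argument. Then Lemma~\ref{lem:molnarintersection} is deduced by a spherical-inversion trick: assuming the full intersection is nonempty (else it is vacuously simply connected), pick a common point $p$, project to the sphere with $p$ at the south pole, take complements, and project back sending the south pole to infinity. The hypotheses of Lemma~\ref{lem:molnarintersection} for $\F$ become exactly the hypotheses of Lemma~\ref{lem:molnar} for the transformed family $\F'$, so Lemma~\ref{lem:molnar} gives that the complement of $\bigcup\F'$ is path-connected, which translates back to path-connectedness of $\bigcap\F$. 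Finally, since the complement of $\bigcap\F$ is a union of path-connected sets all containing a common far-away point, Lemma~\ref{lem:sc} upgrades path-connectedness of the bounded set $\bigcap\F$ to simple connectedness. Note that this argument never needs any individual region to be connected.
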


We point out that Lemma~\ref{lem:molnartop} and Lemma~\ref{lem:molnarintersection} are not implied by one another, because their assumptions do not imply each other.
On one hand, if $\F$ has a region that is not path-connected (say, the disjoint union of two disks), then it will not satisfy the assumptions of Lemma~\ref{lem:molnartop} but it is easy to add to it other regions (say, copies of one of the disks), so that they satisfy Lemma~\ref{lem:molnarintersection}.
On the other hand, there are simply connected compact regions whose complement is not even path-connected (one can easily modify the Warsaw circle to get such regions), so not all of the assumptions of Lemma~\ref{lem:molnarintersection} follow from the assumptions of Lemma~\ref{lem:molnartop}.

To prove Lemma \ref{lem:molnarintersection} we first prove the following dual statement.

\begin{lemma}\label{lem:molnar}
	Let $\F$ be a finite family of at least three bounded regions. The union of all of the regions in $\F$ is simply connected if all of the following hold:
	\begin{enumerate}
		\item each region is path-connected;
		\item the complement of the union of any two of the regions is path-connected;
		\item if $Q$ is a path-connected component of the complement of the union of three regions, $q$ is a point in the interior of $Q$ and $r \in \partial Q$, then there exists a path connecting $q$ and $r$ that lies in the interior of $Q$ except for the endpoint $r$; and
		\item any three of the regions intersect.
	\end{enumerate}
\end{lemma}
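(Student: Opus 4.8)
The plan is to reduce the simple-connectivity of the union to a connectivity statement about its complement, and then to prove that statement by induction on $|\F|$, with the three-region case as the difficult base.

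\emph{The easy part.} By condition~4, since $|\F|\ge 3$, every two regions of $\F$ meet, and since each region is path-connected (condition~1), the union $U:=\bigcup_{D\in\F}D$ is path-connected. It then suffices to show that every loop in $U$ contracts in $U$, and I will obtain this from the claim that $\R^2\setminus U$ is connected --- equivalently, that on the sphere $S^2=\R^2\cup\{\infty\}$ the set $\bigcap_{D\in\F}(S^2\setminus D)$ is connected. One must be a little careful here: the implication ``complement connected $\Rightarrow$ union simply connected'' fails for arbitrary planar continua (a Warsaw-circle type set is simply connected yet separates the sphere), so this step is legitimate only because conditions~1 and~3 force the relevant components to be path-connected with accessible boundary and thereby rule out such pathologies; alternatively, one argues directly with a loop, cutting it into finitely many sub-arcs each lying in a single region of $\F$ and contracting it against the complement.

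\emph{The core.} I will prove that $\R^2\setminus U$ is connected by induction on $n=|\F|$, using the observation that every sub-family of $\F$ again satisfies conditions~1--4, so the inductive hypothesis applies to it. For the base case $n=3$: given $x,y\notin D_1\cup D_2\cup D_3$, condition~2 gives a path $\sigma$ from $x$ to $y$ avoiding $D_1\cup D_2$; each maximal sub-arc of $\sigma$ that runs inside $D_3$ has both of its endpoints outside $D_1\cup D_2$ and on (or next to) $\partial D_3$, and I replace it by a detour that hugs $\partial D_3$ from outside and stays outside all three regions. That such a detour reaches its endpoints is exactly where condition~3 enters, applied to the path-connected component of $\R^2\setminus(D_1\cup D_2\cup D_3)$ through which the detour must pass; condition~4 is what rules out the degenerate ``frame'' configuration (three regions whose union encircles a bounded hole) in which $D_3$ would genuinely cut the complement. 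For the inductive step $n\ge 4$, I merge two regions $D_{n-1},D_n$ into $D_{n-1}\cup D_n$, obtaining a family of $n-1$ regions; it is path-connected member by member (condition~4 gives $D_{n-1}\cap D_n\ne\emptyset$) and again satisfies conditions~2--4, the only non-immediate check --- the behaviour of $\R^2\setminus(D_i\cup D_j\cup(D_{n-1}\cup D_n))$ --- being supplied by reusing the three-region analysis together with condition~3; the inductive hypothesis then yields that $U$ is simply connected. Finally, Lemma~\ref{lem:molnarintersection} is the dual statement, obtained by applying Lemma~\ref{lem:molnar} to the complements $S^2\setminus D$ (which satisfy the dual hypotheses), and Theorem~\ref{thm:psdisksimplyconn} follows because every stabbed pseudo-disk family satisfies the hypotheses of Lemmas~\ref{lem:molnar} and~\ref{lem:molnarintersection}.

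\emph{The main obstacle.} It is the three-region base case, which is the base case of the Topological Helly Theorem (Lemma~\ref{lem:molnartop}); as the paper notes, this was for a long time not completely proved, the difficulty being that path-connectedness alone allows the boundaries of the regions --- and of the components of the complement --- to be wild, so the rerouting argument can fail without extra input. Condition~3 (local accessibility of the boundary of every component of a triple complement) is designed precisely to supply that input, and the bulk of the proof is the careful execution of the detour construction using it. A secondary, already-present subtlety is the passage between ``union simply connected'' and ``complement connected,'' which is again justified only because conditions~1 and~3 exclude the pathological continua.
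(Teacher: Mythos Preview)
Your approach differs from the paper's and carries two genuine gaps. First, the inductive step: when you merge $D_{n-1},D_n$ into $D'$, condition~3 for the new family requires boundary-accessibility for components of $\R^2\setminus(D_i\cup D_j\cup D')$, which is the complement of \emph{four} original regions; the original condition~3 speaks only of triples, and your ``three-region analysis'' delivers path-connectedness, not the accessibility property that condition~3 asserts. So the merged family is not known to satisfy the hypotheses needed to invoke the inductive hypothesis. Second, the base case $n=3$ is only sketched; you assert that condition~4 ``rules out the frame configuration'' but do not explain why $D_1\cap D_2\cap D_3\ne\emptyset$ should prevent the complement of $D_1\cup D_2\cup D_3$ from acquiring a bounded component, and you yourself flag this as ``the main obstacle.'' As written, this is a plan rather than a proof.

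The paper sidesteps both difficulties with a single minimal-counterexample argument and no induction. Take a minimal $\F'\subseteq\F$ (necessarily $|\F'|\ge 3$ by condition~2) whose union has a disconnected complement, and pick $q,r$ in distinct components. For each $D\in\F'$, minimality gives a curve $\gamma_D$ from $q$ to $r$ avoiding every region except $D$; condition~3 is used exactly once, to arrange that $\gamma_D$ meets the boundary of $q$'s component in a single point $x_D$. Now fix any three $D_1,D_2,D_3\in\F'$ and a point $p\in D_1\cap D_2\cap D_3$ (condition~4), and join $p$ to each $x_{D_i}$ by a curve inside $D_i$ (condition~1). The resulting curves form a drawing of $K_{3,3}$ on vertex classes $\{p,q,r\}$ and $\{x_{D_1},x_{D_2},x_{D_3}\}$ in which independent edges are pairwise disjoint, contradicting Hanani--Tutte. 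Finally, the passage from ``complement path-connected'' to ``union simply connected'' is simply Lemma~\ref{lem:sc} (a bounded path-connected region with path-connected complement is simply connected); no further appeal to conditions~1 or~3 is needed there.
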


Before proving Lemma \ref{lem:molnar} we prove an easy lemma about the connection between path-connectedness and simply connectedness.

\begin{lemma}\label{lem:sc}
	If a region $A$ in the plane and its complement  $\bar A$  are both path-connected, then either $A$ or $\bar A$ is also simply connected. If furthermore $A$ is bounded, then $A$ is simply connected.
\end{lemma}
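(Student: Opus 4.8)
The plan is to reduce everything, via the Jordan--Schoenflies theorem, to one elementary fact: a path in $\R^2$ joining an interior point of a closed topological disk to a point outside that disk must meet its boundary. For a Jordan curve $\gamma$ write $\mathrm{int}(\gamma)$ for the bounded component of $\R^2\setminus\gamma$ and $\bar D_\gamma=\gamma\cup\mathrm{int}(\gamma)$ for the closed disk it bounds. The first observation is that if $\gamma\subseteq R$ and $\bar D_\gamma\subseteq R$ then $\gamma$ is contractible in $R$ (shrink it inside $\bar D_\gamma$); since the definition of simple connectedness only quantifies over Jordan curves, a path-connected region $R$ is simply connected as soon as $\bar D_\gamma\subseteq R$ for every Jordan curve $\gamma\subseteq R$. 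Conversely, if $A$ is path-connected but not simply connected, there is a Jordan curve $\gamma\subseteq A$ with $\mathrm{int}(\gamma)\not\subseteq A$, so one may fix a point $p\in\mathrm{int}(\gamma)\cap\bar A$.

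Assuming $A$ is not simply connected, I would fix $\gamma$ and $p$ as above and show that every Jordan curve $\delta\subseteq\bar A$ has $\bar D_\delta\subseteq\bar A$; this makes $\bar A$ simply connected and yields the dichotomy. Since $\gamma\subseteq A$ and $\delta\subseteq\bar A$ are disjoint Jordan curves, the planar topology of two disjoint Jordan curves leaves exactly three possibilities: (i) $\bar D_\gamma\subseteq\mathrm{int}(\delta)$; (ii) $\bar D_\delta\subseteq\mathrm{int}(\gamma)$; (iii) $\bar D_\gamma\cap\bar D_\delta=\emptyset$. Case (i) cannot occur: picking $r\in\delta\subseteq\bar A$ we have $r\notin\bar D_\gamma$ while $p\in\mathrm{int}(\gamma)$, so any path in $\bar A$ from $p$ to $r$ would have to cross $\gamma\subseteq A$. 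In cases (ii) and (iii) the curve $\gamma$ is disjoint from $\bar D_\delta$, so if some $q\in\mathrm{int}(\delta)$ belonged to $A$ then a path in $A$ from $q$ to a point of $\gamma$ would cross $\delta\subseteq\bar A$; hence $\mathrm{int}(\delta)\subseteq\bar A$, and since $\delta\subseteq\bar A$ as well, $\bar D_\delta\subseteq\bar A$, as wanted.

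For the second statement, suppose $A$ is bounded but not simply connected, and again fix $\gamma\subseteq A$ and $p\in\mathrm{int}(\gamma)\cap\bar A$. As $A$ and the disk $\bar D_\gamma$ are both bounded, $\bar A$ contains some point $r$ lying outside $\bar D_\gamma$, and a path in $\bar A$ from $p$ to $r$ must cross $\gamma\subseteq A$ --- a contradiction. Hence a bounded $A$ is always simply connected, and in particular it is the member of $\{A,\bar A\}$ produced by the first part.

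The step I expect to require the most care is establishing that cases (i)--(iii) are exhaustive and occur in exactly this ``nested/disjoint'' form: given two disjoint Jordan curves, one has to verify (using Schoenflies) that whenever one curve lies in a single complementary component of the other, its entire closed disk lies in that component. Once that is in place, every remaining step is the one-line crossing argument above, so this planar-topology bookkeeping is really the only non-formal point in the proof.
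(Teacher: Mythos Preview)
Your proof is correct and follows essentially the same route as the paper's: assume $A$ is not simply connected, take a Jordan curve $\gamma\subseteq A$ with a point $p\in\bar A$ in its interior, and then use ``a path from inside to outside must cross the boundary'' twice --- once to rule out $\bar A$ having points outside $\gamma$ (your case~(i), and implicitly~(iii)), and once to show that any Jordan curve $\delta\subseteq\bar A$ bounds a disk in $\bar A$ (your case~(ii), the paper's curve $\gamma'$). The paper organizes the same two crossing arguments slightly more tersely by first deducing that $\bar A\subseteq\mathrm{int}(\gamma)$, whereas you instead spell out the disjoint-Jordan-curves trichotomy explicitly; but the content is identical, including the bounded case.
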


\begin{proof}
	If $A$ is not simply connected,	then there must be a closed curve $\gamma$ in $A$ which cannot be contracted; such $\gamma$ must necessarily contain a point $p$ of $\bar A$ in its interior. If $\bar A$ contains a point $q$ outside $\gamma$, then every curve connecting $p$ to $q$ must intersect $\gamma$ and thus $\bar A$ cannot be path-connected, a contradiction. Thus $\bar A$ lies completely inside $\gamma$. If $\bar A$ is not simply connected, there must be a closed curve $\gamma'$ in $\bar A$ which contains a point $r$ of $A$ in its interior. As $\gamma'$ is in $\bar A$, it lies completely inside $\gamma$ and thus $\gamma'$ separates $r$ from $\gamma$, contradicting that $A$ is path-connected.
	Therefore $\bar A$ must be simply connected.
	
	Now assume that $A$ is bounded. If $A$ is not simply connected,	then the above defined $\gamma$ would separate any far enough point from $p$, contradicting that $\bar A$ is path-connected.
	Therefore in this case $A$ must be simply connected.
\end{proof}

\begin{remark}
	On the surface of the sphere $\mathbb S^2$ Lemma \ref{lem:sc} translates to an even nicer statement: if a region $A\subset \mathbb S^2$ and its complement $\mathbb S^2\setminus A$ are both path-connected, then both of them are also simply connected.
\end{remark}

\begin{proof}[Proof of Lemma \ref{lem:molnar}]	
	First we prove that the complement of the union of all of the regions in $\F$ is path-connected, for which we do not even use that the regions are bounded.
	
	Let $\F$ be a finite family of at least three (not necessarily bounded) regions that satisfies properties (1)--(4) of Lemma~\ref{lem:molnar}.
	Suppose for contradiction that the complement of the union of all the regions in $\F$ is not path-connected. Consider a containment minimal $\F'\subset \F$ with $|\F'|\ge 2$ for which the complement of $P=\bigcup_{D \in \F'} D$ is not path-connected. As for subfamilies with two regions this cannot hold by our assumptions, we have $|\F'|\ge 3$.
	
	Fix two points, $q$ and $r$, that are in different path-connected components of $\mathbb R^2\setminus P$ (see Figure \ref{fig:k32}).
	Denote the path-connected component of $q$ in $\mathbb R^2\setminus P$ by $Q$.
	The minimality of $\F'$ implies that for any $D\in \F'$ there is a simple curve $\gamma_D$ that connects $q$ to $r$ and avoids $P\setminus\{D\}$. 
	We may assume that each such $\gamma_D$ intersects $\partial Q$, the boundary of $Q$, in exactly one point, $x_D\in \partial D\cap \partial Q$. Indeed, otherwise we can achieve this simply by taking the point $x_D \in \gamma_D \cap \partial Q$ that is farthest from $q$ on $\gamma_D$, and replace the part of $\gamma_D$ between $q$ and $x_D$ by a curve that lies completely inside $Q$ except for $x_D$ (this can be done by the third property of the lemma). 
	
	Denote the part of $\gamma_D$ from $q$ to $x_D$ by $\gamma_D^q$ and the part of $\gamma_D$ from $x_D$ to $r$ by $\gamma_D^r$.
	Due to the construction, $\gamma_D^q\cap \gamma_{D'}^r=\emptyset$ for any $D\ne D'$.
	
	Finally, fix three regions $D_1,D_2,D_3 \in \F'$ and a point $p$ in their intersection. Note that $p$ is different than $r$, $q$ and the points $x_{D_i}$, since $r$ and $q$ belong to the complement of $P$ and each point $x_{D_i}$ belongs to exactly one of the three regions $D_i$, $i=1,2,3$.
	
	Connect $p$ to each point $x_{D_i}$ ($i=1,2,3$) with a curve $\gamma_{D_i}^p$ inside $D_i$. This can be done as $D_i$ is path-connected.
	Note that $\gamma_{D_i}^p\cap \gamma_{D_j}^q=\emptyset$ and $\gamma_{D_i}^p\cap \gamma_{D_j}^r=\emptyset$ for $i \ne j$ as  $\gamma_{D_j}^q$ and $\gamma_{D_j}^r$ are disjoint from $D_i$.
	We have thus obtained a drawing of $K_{3,3}$ in which independent edges do not cross, contradicting the theorems of Hanani-Tutte and Kuratowski.\footnote{As it is \textit{impossibly hard} to draw $K_{3,3}$ where independent edges do not cross, as a second best illustration the reader may consult Figure \ref{fig:k32} where the curves form a drawing of $K_{3,2}$.}
	Thus we conclude that the complement of the union of all the regions is path-connected. 
	
	Now suppose that the regions in $\F$ are bounded. It implies that their union is also bounded. Since their union is also path-connected (as all of them are path-connected), 
	it follows from Lemma~\ref{lem:sc} that the union of all of the regions must also be simply connected, finishing the proof.
\end{proof}

\begin{figure}
	\centering
	\includegraphics[height=4cm]{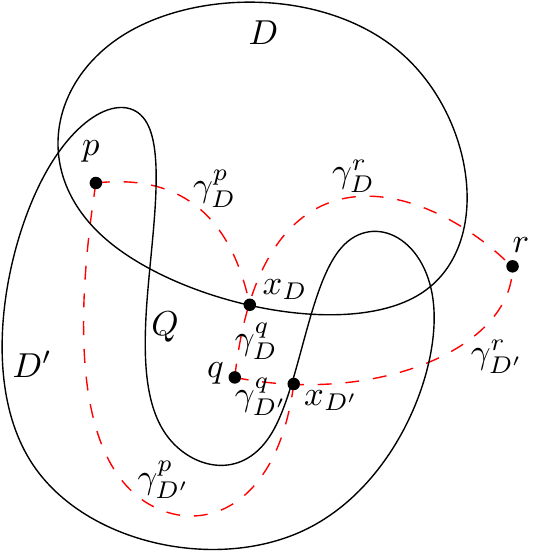}
	\caption{Proof of Lemma \ref{lem:molnar}.}
	\label{fig:k32}	
\end{figure}

The next proof is similar to a proof of a weaker statement in \cite[Lemma~2.9]{lenses}.

\begin{proof}[Proof of Lemma \ref{lem:molnarintersection}] 
	Let $\F$ be a finite family of at least three bounded regions whose complements are path-connected and the intersection of any two of the regions is path-connected and for which for every path-connected component $Q$ of the intersection of three regions and points $q$ in the interior of $Q$ and $r \in \partial Q$ there exists a path connecting $q$ and $r$ that lies in the interior of $Q$ except for the endpoint $r$. We wish to show that the intersection of all of the regions in $\F$ is simply connected.
	
	If the intersection of all of the regions in $\F$ is empty then it is also trivially simply connected. Otherwise there is a point $p$ which is in all the regions.
	
	Project each member of $\F$ onto a sphere such that $p$ is the south pole and the north pole corresponds to infinity. Take the complement (on the sphere) of this projection, and then project it to a plane such that the south pole corresponds to infinity. We denote the family obtained this way from $\F$ by $\F'$. The image of a region $F \in \F$ is denoted by $F'\in \F'$. 
	
	As the complement of each region $F\in \F$ was path-connected, each $F'\in \F'$ is path-connected. Each $F'\in\F'$ contains the point that is the image of the north pole in the second projection. Also, the intersection of any $F_1,F_2\in \F$ is path-connected and thus the union of the complement of any $F_1',F_2' \in \F'$ is also path-connected. Furthermore, the third property of Lemma \ref{lem:molnar} holds for $\F'$ due to the third property of Lemma \ref{lem:molnarintersection}. Finally, any three regions from $\F'$ do intersect as all of them contain the point that corresponded to infinity during the projection step.

	Thus we can apply Lemma~\ref{lem:molnar} to $\F'$ to conclude that the complement of the union of all of the regions in $\F'$ is path-connected. 
	This implies that the intersection $I$ of all the regions in $\F$ is path-connected. Its complement $\bar I$ is also path-connected as it is the union of the complements of the regions in $\F$, where by our assumptions these complements are path-connected and share the point $p$. As $I$ is bounded, Lemma \ref{lem:sc} implies that $I$ is also simply connected.
\end{proof}

Lemmas \ref{lem:molnar} and \ref{lem:molnarintersection} together imply Theorem \ref{thm:psdisksimplyconn}. 

\begin{proof}[Proof of Theorem \ref{thm:psdisksimplyconn}]
	Given a finite family of stabbed pseudo-disks, we only need to show that the properties of Lemmas \ref{lem:molnar} and \ref{lem:molnarintersection} hold. 
	
	First, the regions are indeed bounded. Second, each region is path-connected as it is homeomorphic to a disk. The complement of each region is also path-connected as it is homeomorphic to the closure of $\R^2\setminus D$. The intersection of any two of the regions is a closure of the face of the arrangement consisting of these two regions, thus also homeomorphic to a closed disk and so it is path-connected. The complement of the union of any two of the regions is the outer face of the arrangement of these two pseudo-disks and thus homeomorphic to $\R^2\setminus D$, and thus path-connected. The third property in both lemmas holds also as the regions $Q$ in question are homeomorphic to (open or closed) disks.	
	Finally, any three regions do indeed intersect as it is a stabbed pseudo-disk family.	
\end{proof}

We note that we could also have a proof which relies only on Lemma \ref{lem:molnartop}. We then would need to dualize Lemma \ref{lem:molnartop} and this dual version together with Lemma \ref{lem:molnartop} would also imply Theorem \ref{thm:psdisksimplyconn} the same way as the above proof, using that every face of the arrangement is homeomorphic to a disk and thus fulfills the requirements of Lemma \ref{lem:molnartop} and its (unphrased) dual version.



\end{document}